\documentclass[11pt,b5paper,notitlepage]{article}
\usepackage[b5paper, margin={0.5in,0.65in}]{geometry}

\usepackage{amsmath,amscd,amssymb,amsthm,mathrsfs,amsfonts,layout,indentfirst,graphicx,caption,mathabx, stmaryrd,appendix,calc,imakeidx,upgreek,appendix} 
\usepackage[dvipsnames]{xcolor}
\usepackage{palatino}  
\usepackage{slashed} 
\usepackage{mathrsfs} 
\usepackage{extarrows} 
\usepackage{enumitem} 
\usepackage[all]{xy}
\usepackage{fancyhdr} 
\usepackage{verbatim}
\usepackage{booktabs, tabularx, array}
\newcolumntype{Y}{>{\centering\arraybackslash}X}

\usepackage{tikz-cd}
\usepackage[nottoc]{tocbibind}   

\usepackage{float}






\usepackage{lipsum}
\let\OLDthebibliography\thebibliography
\renewcommand\thebibliography[1]{
	\OLDthebibliography{#1}
	\setlength{\parskip}{0pt}
	\setlength{\itemsep}{2pt} 
}

\allowdisplaybreaks  
\usepackage{latexsym}
\usepackage{chngcntr}
\usepackage[colorlinks,linkcolor=blue,anchorcolor=blue, linktocpage,
]{hyperref}
\hypersetup{ urlcolor=cyan,
	citecolor=[rgb]{0,0.5,0}}

\usepackage{simpler-wick}


\DeclareFontFamily{U}{rcjhbltx}{}
\DeclareFontShape{U}{rcjhbltx}{m}{n}{<->s*[1.15]rcjhbltx}{}   
\DeclareSymbolFont{hebrewletters}{U}{rcjhbltx}{m}{n}

\DeclareMathSymbol{\lamed}{\mathord}{hebrewletters}{108}
\DeclareMathSymbol{\mem}{\mathord}{hebrewletters}{109}
\DeclareMathSymbol{\ayin}{\mathord}{hebrewletters}{96}
\DeclareMathSymbol{\tsadi}{\mathord}{hebrewletters}{118}
\DeclareMathSymbol{\qof}{\mathord}{hebrewletters}{113}
\DeclareMathSymbol{\shin}{\mathord}{hebrewletters}{152}



\setcounter{tocdepth}{2}

\counterwithin{figure}{section}

\pagestyle{plain}

\captionsetup[figure]
{
	labelsep=none	
}

\theoremstyle{definition}
\newtheorem{df}{Definition}[section]

\newtheorem{rem}[df]{Remark}

\theoremstyle{plain}
\newtheorem{thm}[df]{Theorem}

\newtheorem{pp}[df]{Proposition}
\newtheorem{co}[df]{Corollary}




\newcommand{\fk}{\mathfrak}

\newcommand{\wtd}{\widetilde}

\newcommand{\ovl}{\overline}

\newcommand{\Dbb}{\mathbb{D}}


\newcommand{\End}{\mathrm{End}} 
\newcommand{\idt}{\mathbf{1}}
\newcommand{\Hom}{\mathrm{Hom}}

\newcommand{\Vect}{\mathcal Vect}

\newcommand{\blt}{\bullet}

\newcommand{\Vbb}{\mathbb V}
\newcommand{\Ubb}{\mathbb U}
\newcommand{\Xbb}{\mathbb X}
\newcommand{\Ybb}{\mathbb Y}
\newcommand{\Wbb}{\mathbb W}
\newcommand{\Mbb}{\mathbb M}
\newcommand{\Gbb}{\mathbb G}
\newcommand{\Cbb}{\mathbb C}
\newcommand{\Nbb}{\mathbb N}
\newcommand{\Zbb}{\mathbb Z}
\newcommand{\Pbb}{\mathbb P}

\newcommand{\Ebb}{\mathbb E}
\newcommand{\cbf}{\mathbf c}
\newcommand{\wt}{\mathrm{wt}}

\newcommand{\MU}{\mathcal{U}}

\newcommand{\fx}{\mathfrak{X}}

\newcommand{\ST}{\mathscr{T}}

\newcommand{\SF}{\mathscr{F}}

\newcommand{\bk}[1]{\langle {#1}\rangle}

\newcommand{\Mod}{\mathrm{Mod}}

\newcommand{\id}{\mathrm{id}}

\newcommand{\eps}{\varepsilon}

\newcommand{\fn}{\mathfrak{N}}
\newcommand{\fy}{\mathfrak{Y}}

\newcommand{\Irr}{\mathrm{Irr}}

\newcommand{\Lan}{{\big\langle}}
\newcommand{\Ran}{{\big\rangle}}
\newcommand{\Coh}{{\mathrm{Coh}_{\mathrm L}}}
\newcommand{\Abb}{{\mathbb{A}}}

\usepackage{tipa} 

\usepackage{tipx}

\numberwithin{equation}{section}



\title{Non-Equivalence of Smooth and Nodal Conformal Block Functors in Logarithmic CFT}
\author{{\sc Hao Zhang}
}
\date{}
\begin{document}\sloppy 
	\pagenumbering{arabic}
	\setcounter{section}{-1}

	\maketitle

\newcommand\blfootnote[1]{%
	\begingroup
	\renewcommand\thefootnote{}\footnote{#1}%
	\addtocounter{footnote}{-1}%
	\endgroup
}




\begin{abstract}
Let $\Vbb$ be an $\Nbb$-graded, $C_2$-cofinite vertex operator algebra (VOA) admitting a non-lowest generated module in $\Mod(\Vbb)$ (e.g., the triplet algebras $\mathcal{W}_p$ for $p\in \Zbb_{\geq 2}$ or the even symplectic fermion VOAs $SF_d^+$ for $d\in \Zbb_+$). We prove that, unlike in the rational case, the spaces of conformal blocks associated to certain $\Vbb$-modules do not form a vector bundle on $\overline{\mathcal{M}}_{0,N}$ for $N\geq 4$ by showing that their dimensions differ between nodal and smooth curves. Consequently, the sheaf of coinvariants associated to these $\Vbb$-modules on $\overline{\mathcal{M}}_{0,N}$ is not locally free for $N\geq 4$. It also follows that, unlike in the rational case, the mode transition algebra $\fk A$ introduced in \cite{DGK2,DGK-presentations} is not isomorphic to the end $\Ebb=\int_{\Xbb\in \Mod(\Vbb)}\Xbb\otimes \Xbb'$ as an object of $\Mod(\Vbb^{\otimes 2})$.
\end{abstract}

\tableofcontents





	
	

	

\section{Introduction}
Let $\Vbb$ be an $\Nbb$-graded, $C_2$-cofinite vertex operator algebra (VOA). When $\Vbb$ is rational, a remarkable achievement is the factorization property of conformal blocks associated to grading-restricted $\Vbb$-modules \cite{DGT2}. This property establishes isomorphisms relating spaces of conformal blocks of higher genus or fewer marked points (e.g., $\fx$ in \eqref{intro1} or $\fy$ in \eqref{intro4}) to those of lower genus or more marked points (e.g., $\wtd \fx$ in \eqref{intro1} or $\wtd \fy$ in \eqref{intro4}):
\begin{subequations}
\begin{gather}
	\label{intro1}\fx=\vcenter{\hbox{{
					   \includegraphics[height=1.5cm]{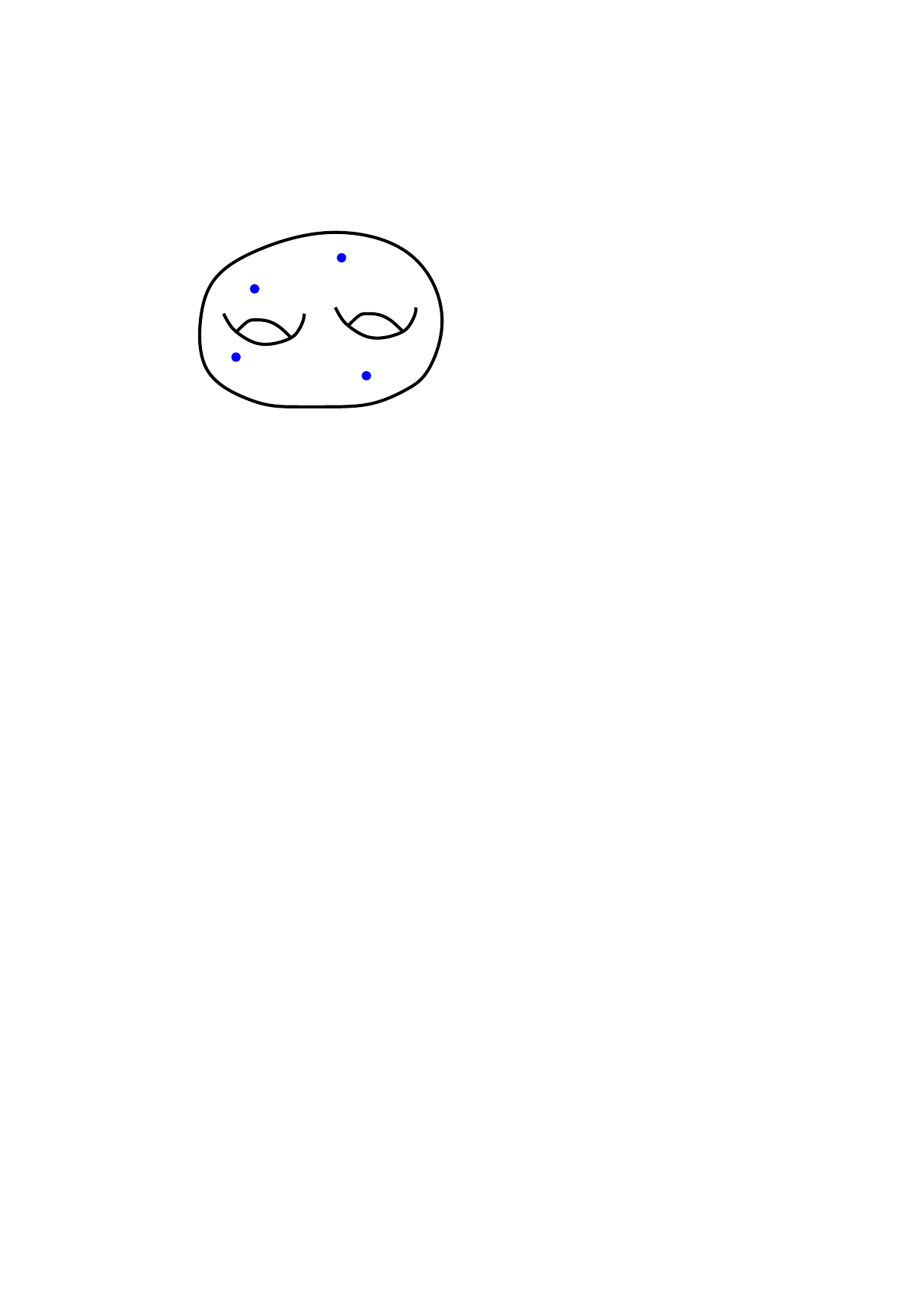}}}},\quad 
						\wtd\fx=\vcenter{\hbox{{
										   \includegraphics[height=1.5cm]{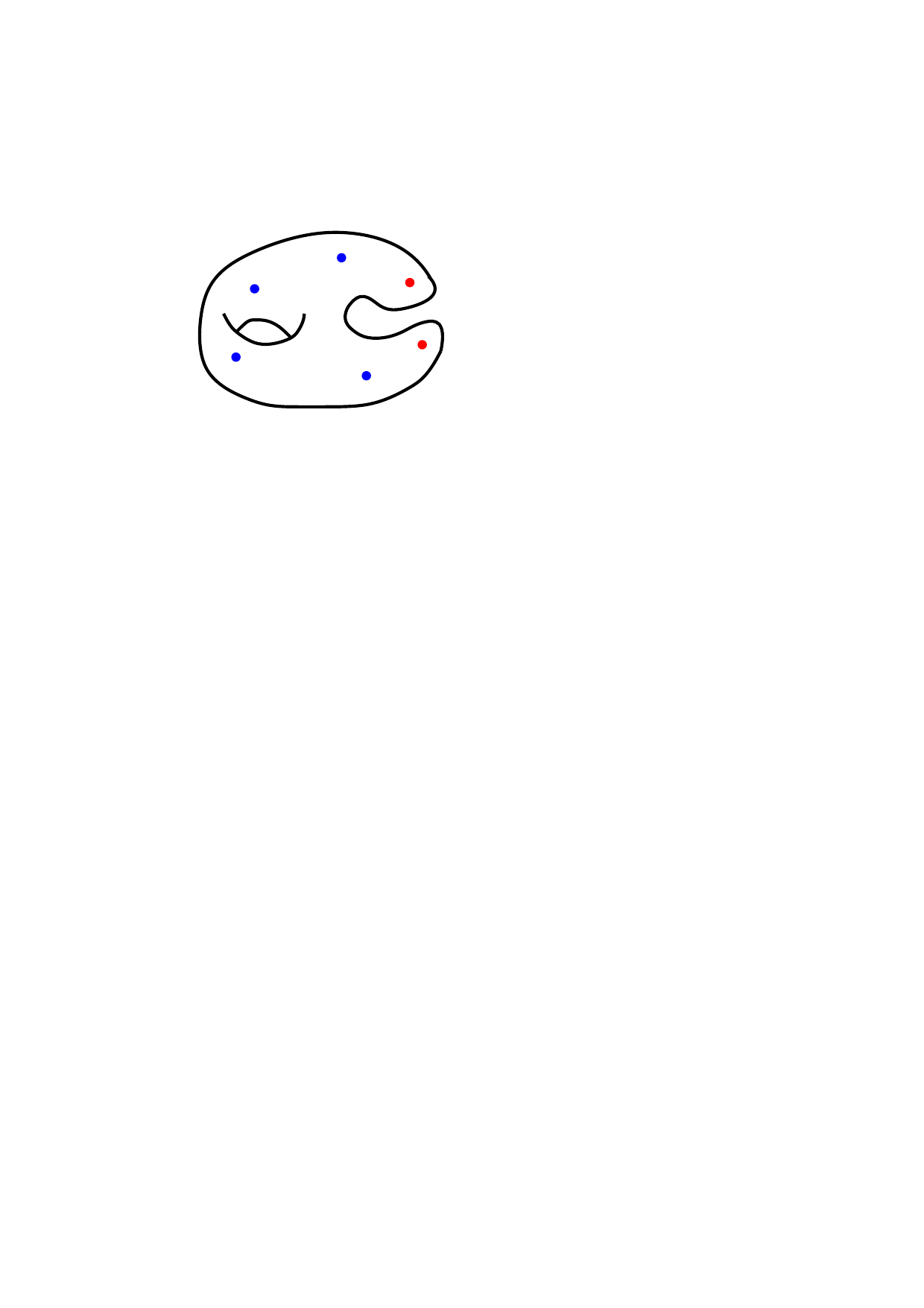}}}}\\
	\label{intro4}\fy=\vcenter{\hbox{{
		\includegraphics[height=1.5cm]{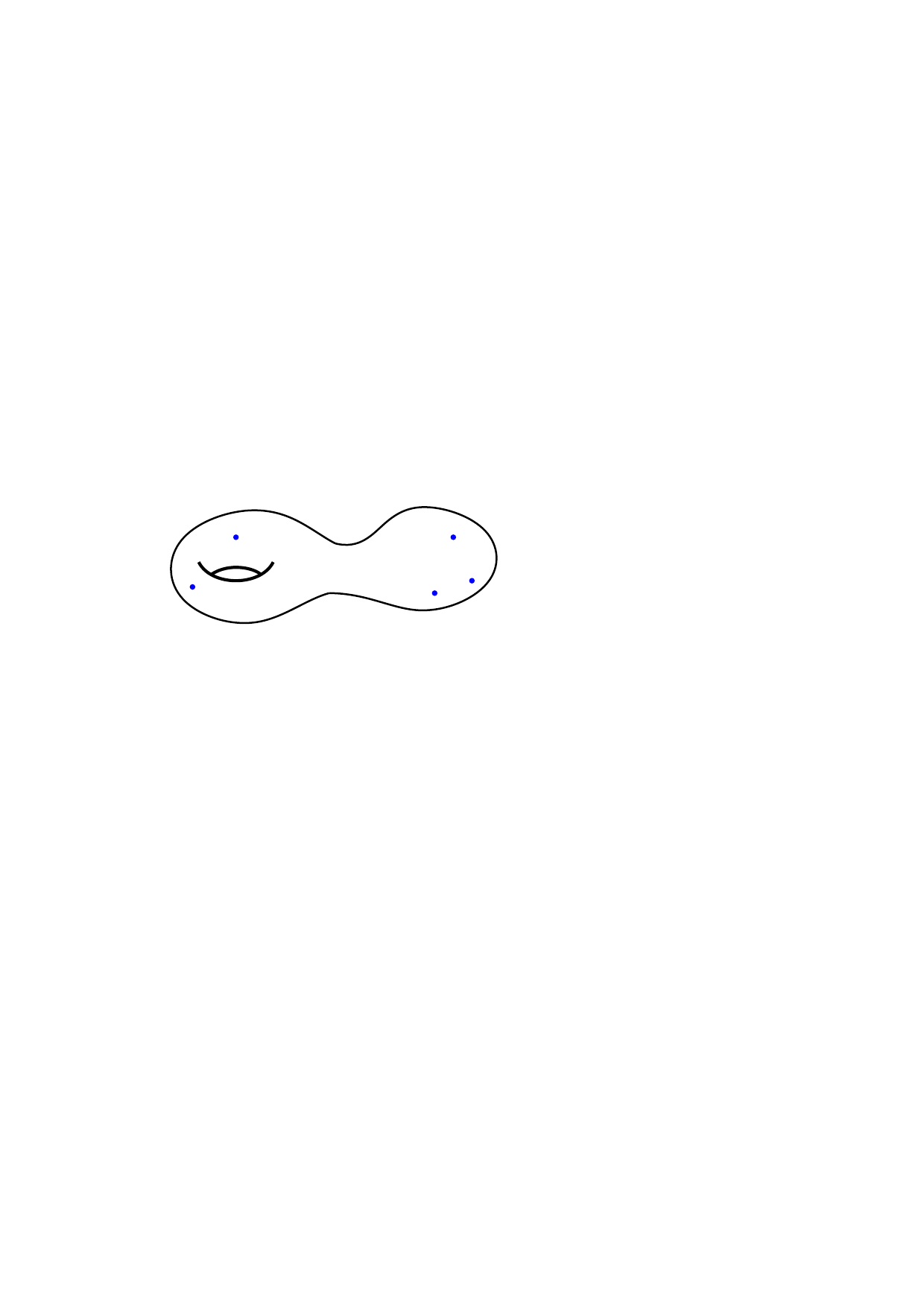}}}},\quad 
		 \wtd\fy=\vcenter{\hbox{{
							\includegraphics[height=1.5cm]{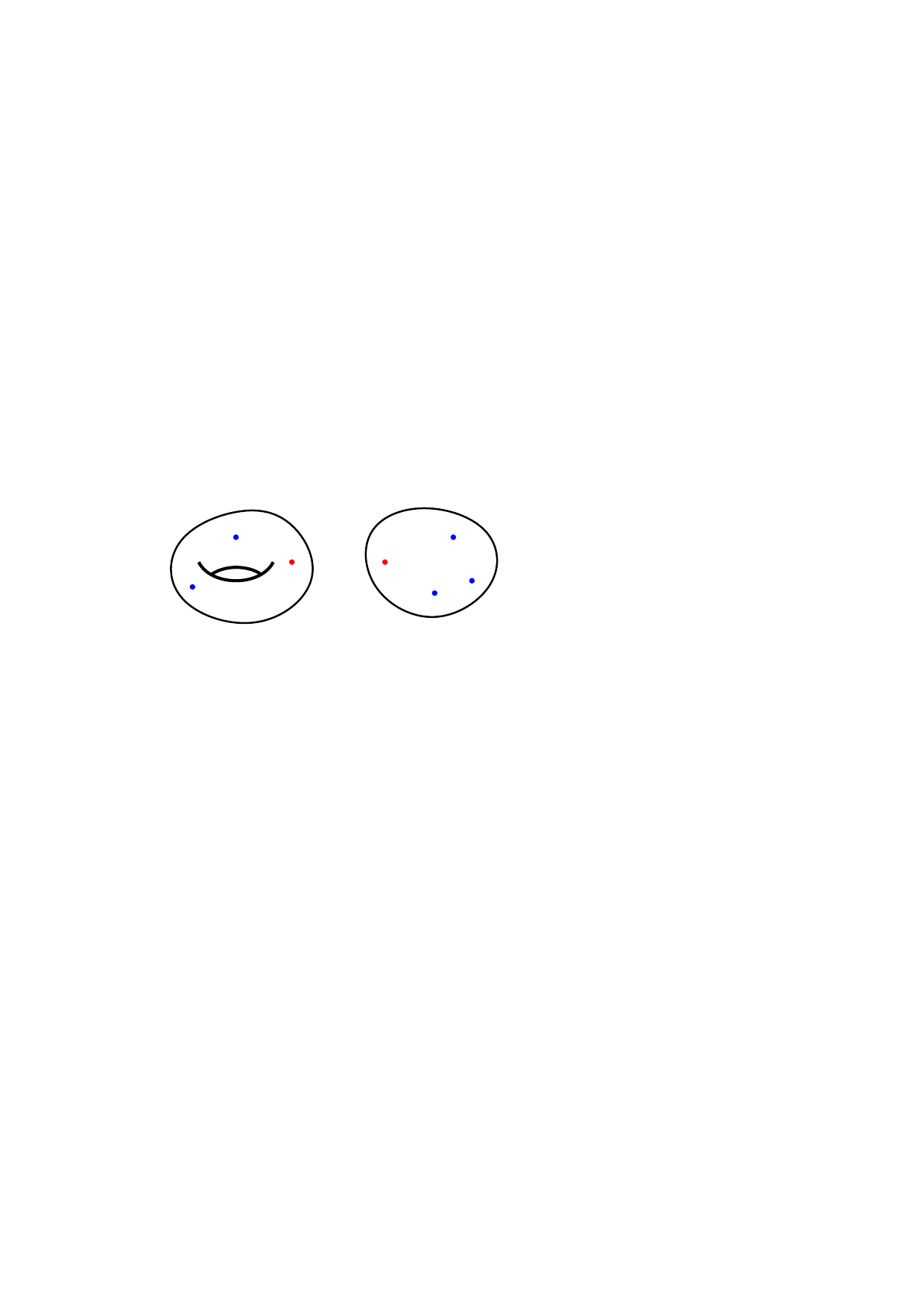}}}}
	\end{gather}
\end{subequations}
These isomorphisms are typically constructed through conformal blocks for \textit{nodal curves} (e.g., $\fx_0$ in \eqref{intro2} or $\fy_0$ in \eqref{intro5}) \cite{TUY,BFM-conformal-blocks,Ueno97,NT-P1_conformal_blocks,Ueno08,DGT1,DGT2}.
\begin{subequations}
\begin{gather}
	\label{intro2}\fx_0=\vcenter{\hbox{{
											\includegraphics[height=1.5cm]{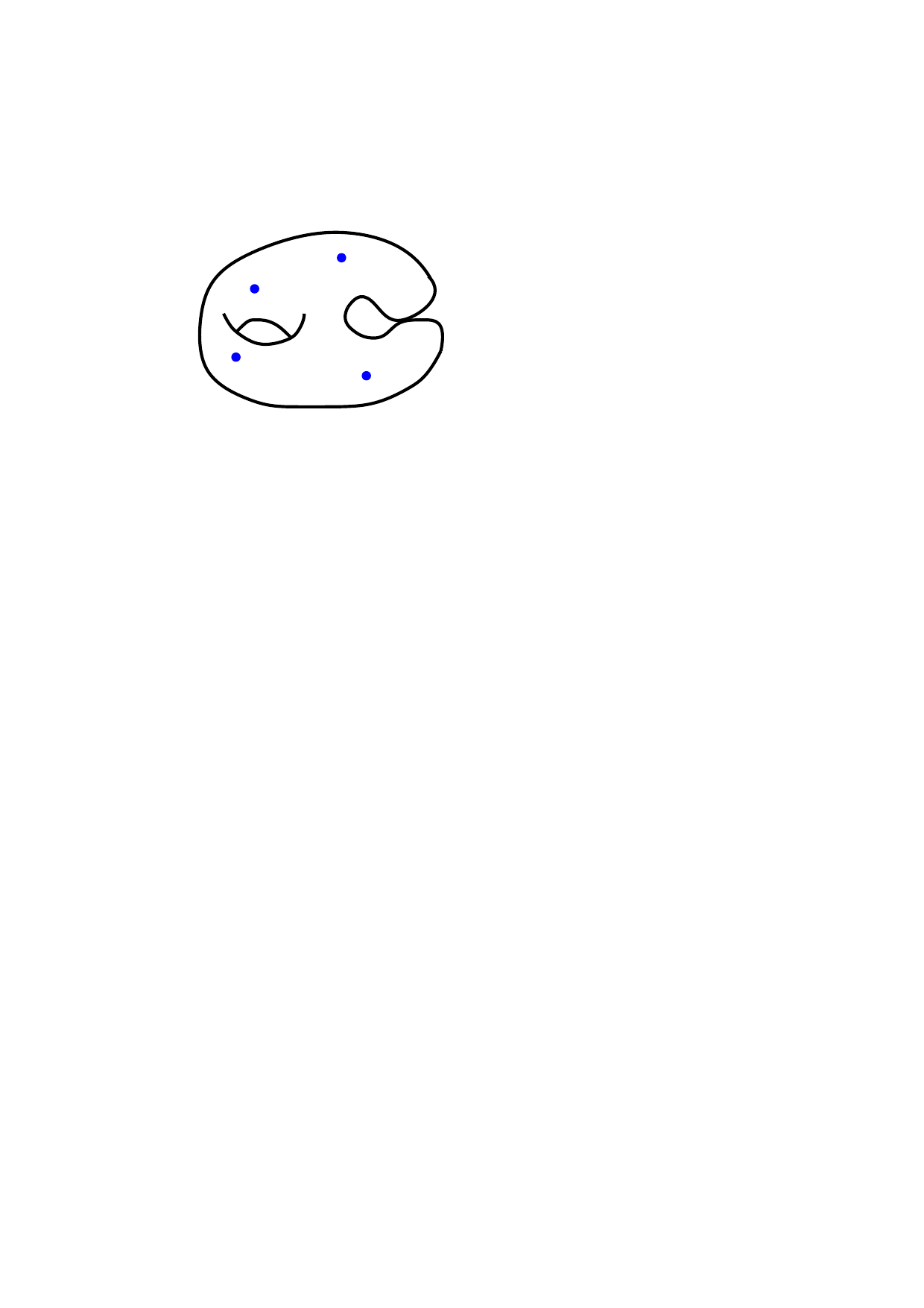}}}}\\
	\label{intro5}\fy_0=\vcenter{\hbox{{
		\includegraphics[height=1.5cm]{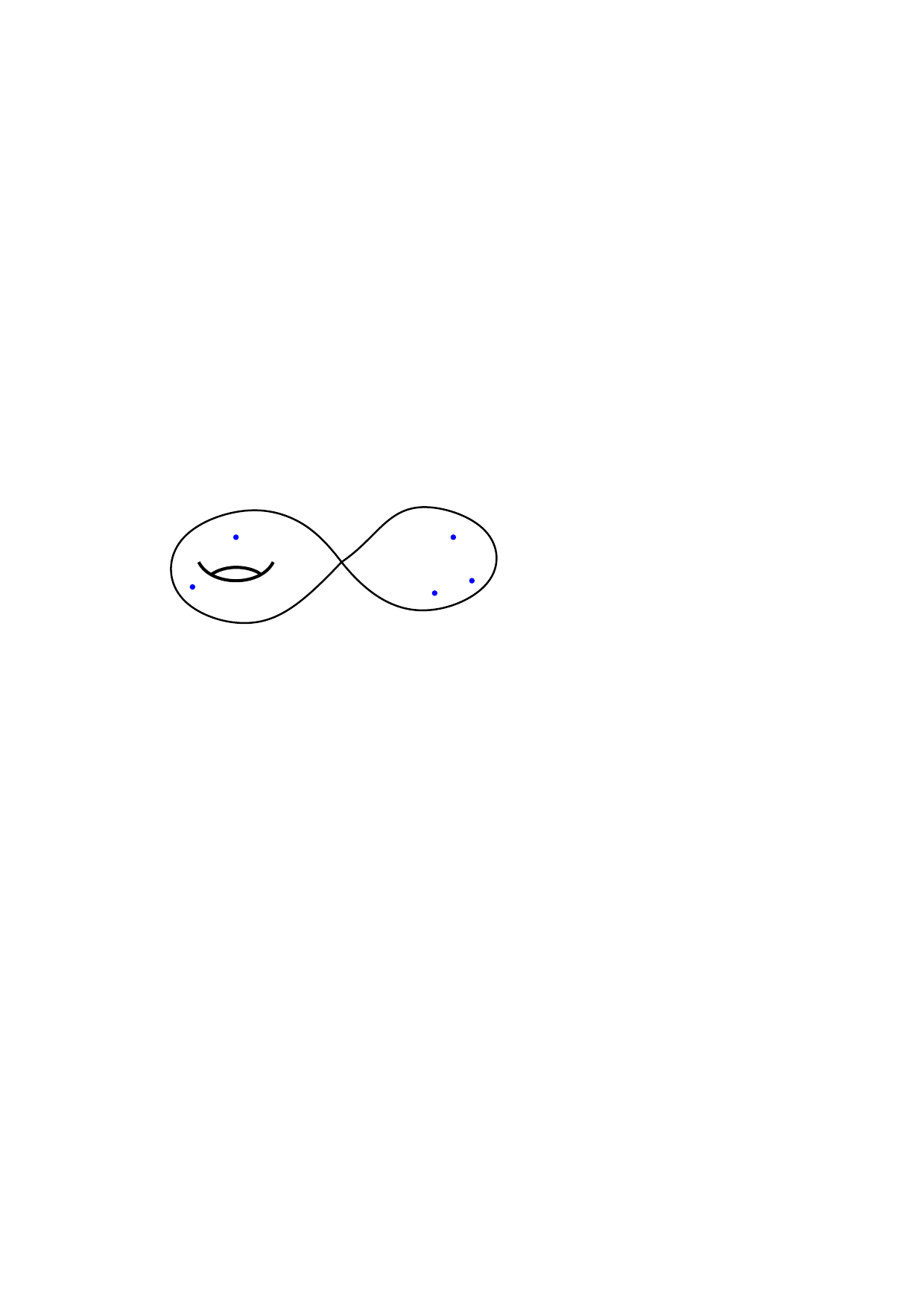}}}}
\end{gather}
\end{subequations}
The proof crucially relies on the fact that the dimensions of the spaces of conformal blocks for nodal curves (e.g., $\fx_0$ or $\fy_0$) and for smooth curves (e.g. $\fx$ or $\fy$) coincide, a condition that ensures the spaces of conformal blocks form a vector bundle on $\ovl{\mathcal{M}}_{g,N}$ (see \cite[Sec. 8]{DGT2} for details). In this paper, we show that, once the rationality assumption is removed, this dimension constancy—and hence the vector bundle structure—no longer holds for general $\Nbb$-graded, $C_2$-cofinite VOAs.

From now on, let $\Vbb$ be an $\Nbb$-graded, $C_2$-cofinite VOA admitting a module in $\Mod(\Vbb)$ that \textit{is not generated by its lowest weight subspace}. (It can be proved that the triplet algebras $\mathcal{W}_p$ for $p\in \Zbb_{\geq 2}$ \cite{Kausch1991,AM-triplet,NagatomoTsuchiya2011,TW-triplet} and the even symplectic fermion VOAs $SF_d^+$ for $d\in \Zbb_+$ \cite{Kausch1995,GaberdielKausch1999,Kausch2000,Runkel2014,GR-symplectic,FGR-symplectic} satisfy this condition; see Cor. \ref{triplet} and \ref{symplectic}.) Recently, Damiolini-Gibney-Krashen introduced the notion of \textbf{strongly unital} property, a sufficient condition that guarantees the vector bundle structure on $\ovl{\mathcal{M}}_{g,N}$ \cite[Cor. 5.2.6]{DGK2}. Moreover, it was proved that the triplet algebras $\mathcal{W}_p$ do not satisfy the strongly unital property \cite[Prop. 9.1.4]{DGK2}. This provides evidence, though not directly imply, that the dimensions of spaces of their conformal blocks differ between nodal and smooth curves.

Motivated by \cite{DGK-presentations}, we obtain our main result (Thm. \ref{main1}), stating that there exist $\Xbb,\Ybb\in \Mod(\Vbb)$ such that 
\begin{align}\label{intro3}
	\dim \ST^*\Bigg(\vcenter{\hbox{{
					   \includegraphics[height=2.0cm]{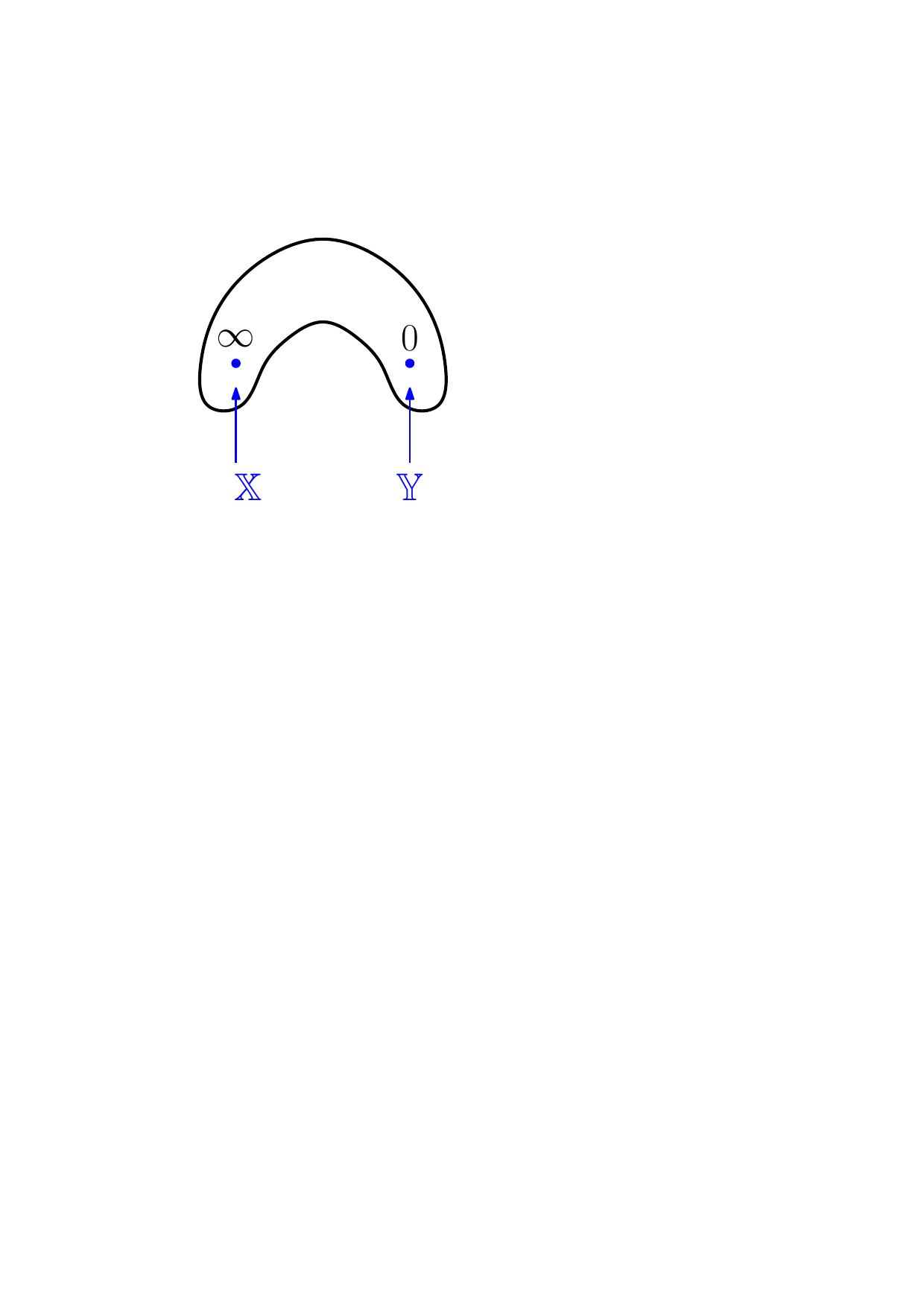}}}}~\Bigg)\ne 
						\dim \ST^*\Bigg(\vcenter{\hbox{{
										   \includegraphics[height=2.0cm]{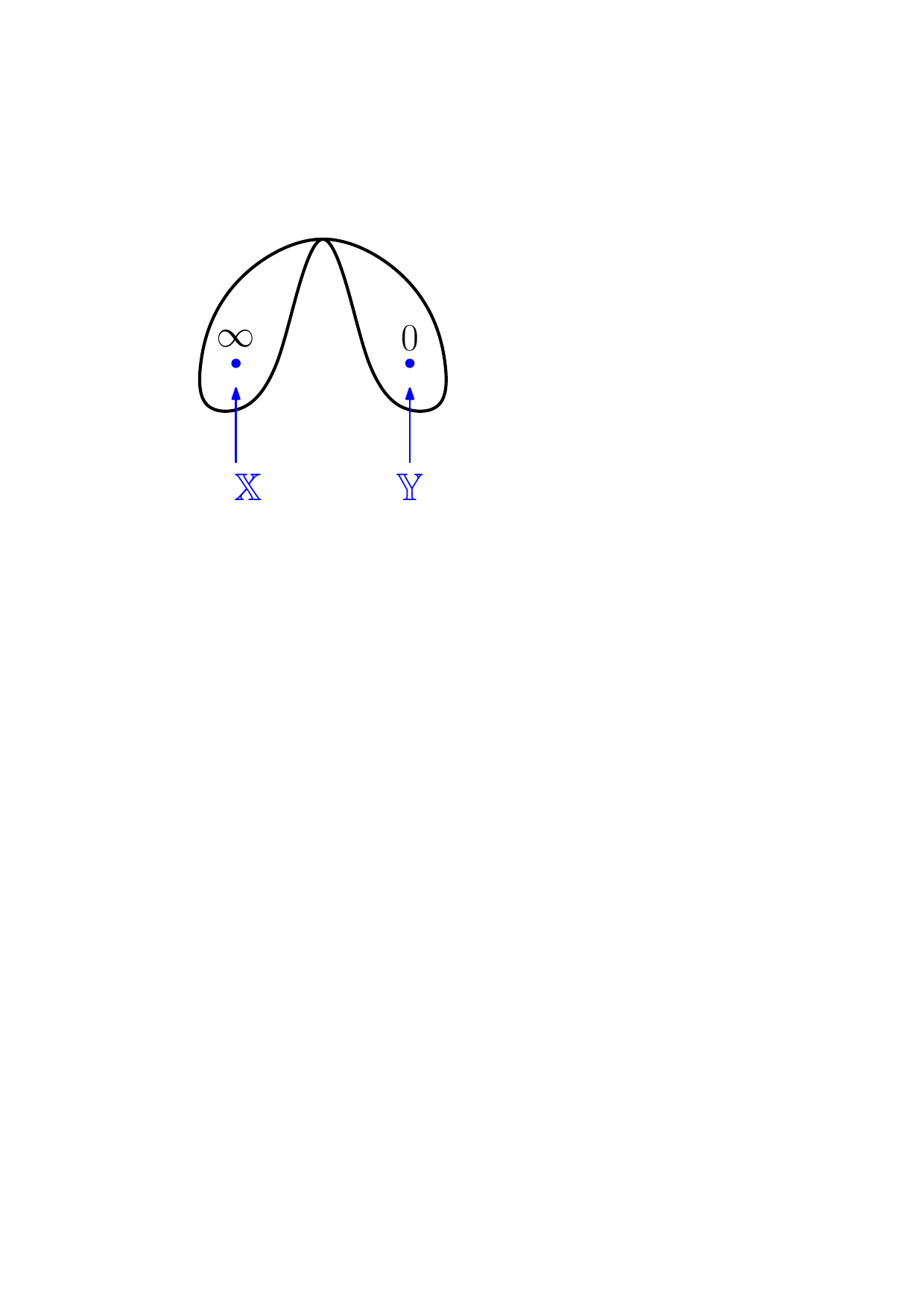}}}}~\Bigg)
	\end{align}
where $\ST^*(\cdots)$ denotes the space of conformal blocks. In fact, our proof shows that $\Xbb$ can be chosen to be an indecomposible projective object, and $\Ybb$ can be chosen to be either an indecomposible projective object or a simple object. Our main idea is a variation of the following argument: the end $\Ebb=\int_{\Xbb\in \Mod(\Vbb)}\Xbb\otimes \Xbb'$ and the mode transition algebra $\fk A$ \cite{DGK3-morita,DGK2,DGK-presentations} are objects in $\Mod(\Vbb^{\otimes 2})$ representing the conformal block functor on the two sides of \eqref{intro3}. However, $\fk A$ is generated by its lowest weight subspace, but $\Ebb$ (which is isomorphic to the contragredient of Li's regular representation \cite{Li-regular-rep,Li-regular-bimodules}) is not.

There are several further consequences of \eqref{intro3}, summarized as follows. 
\begin{itemize}
	\item In \cite[Question 5.6.3]{DW-modular-functor}, the authors ask about the relationship between the mode transition algebra $\fk A$ and the end $\Ebb$. When $\Vbb$ is $C_2$-cofinite and rational, it is proved in \cite{DGK-presentations} that $\fk A\simeq \Ebb$ as objects in $\Mod(\Vbb^{\otimes 2})$. However, in our case, it follows immediately from \eqref{intro3} that
	\begin{align*}
		\fk A\not \simeq \Ebb=\int_{\Xbb\in \Mod(\Vbb)}\Xbb\otimes \Xbb'\quad  \text{in }\Mod(\Vbb^{\otimes 2}).
	\end{align*}
	See Thm. \ref{main8} for details.
	\item In \cite[Question 6.1]{DGK-presentations}, the authors ask whether the sheaf of coinvariants forms a vector bundle. By \eqref{intro3} and propagation of conformal blocks, there exist $\Xbb,\Ybb\in \Mod(\Vbb)$ such that
	\begin{align*}
		\dim \ST^*\Bigg(\vcenter{\hbox{{
						   \includegraphics[height=2.5cm]{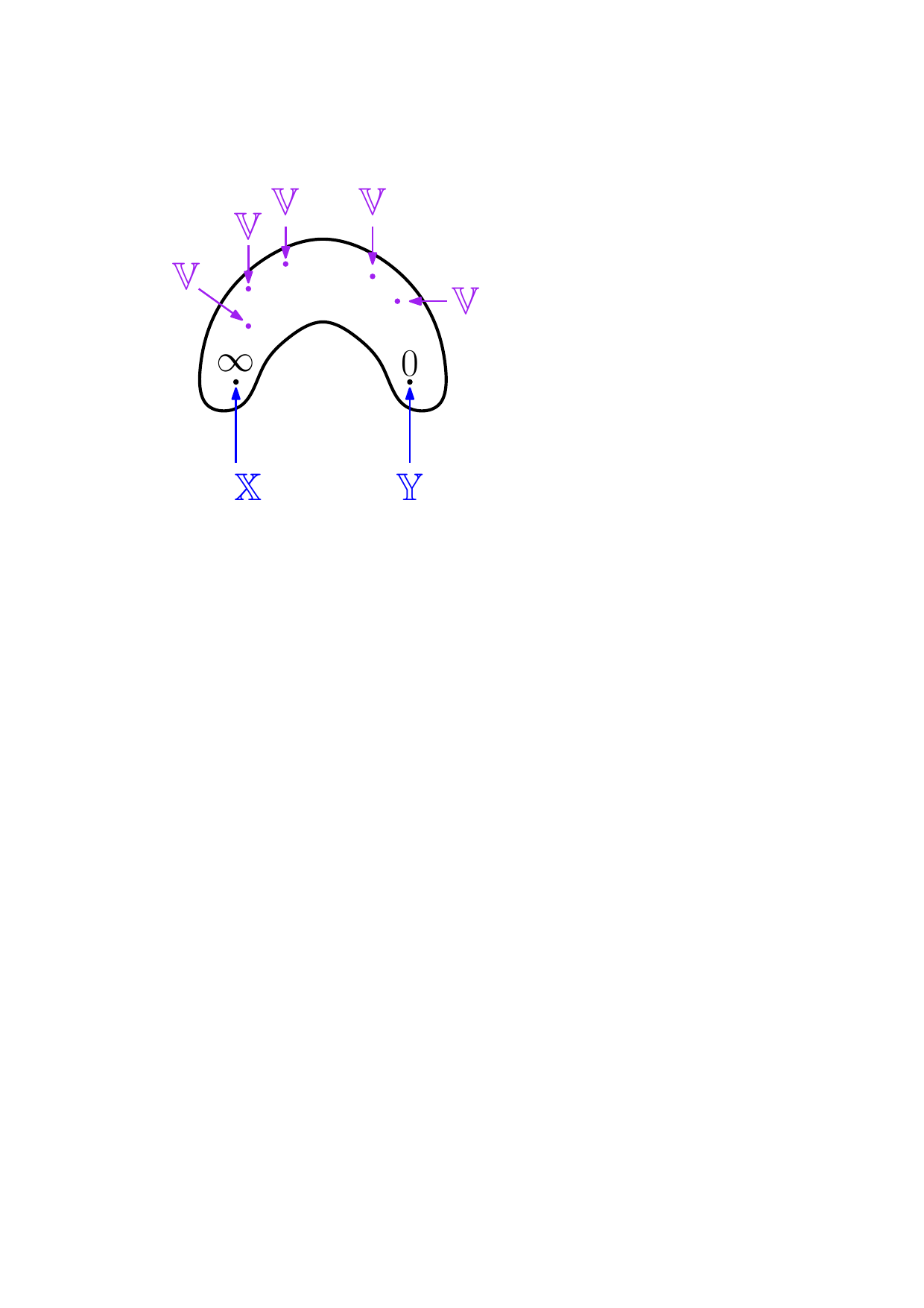}}}}~\Bigg)\ne 
							\dim \ST^*\Bigg(\vcenter{\hbox{{
											   \includegraphics[height=2.5cm]{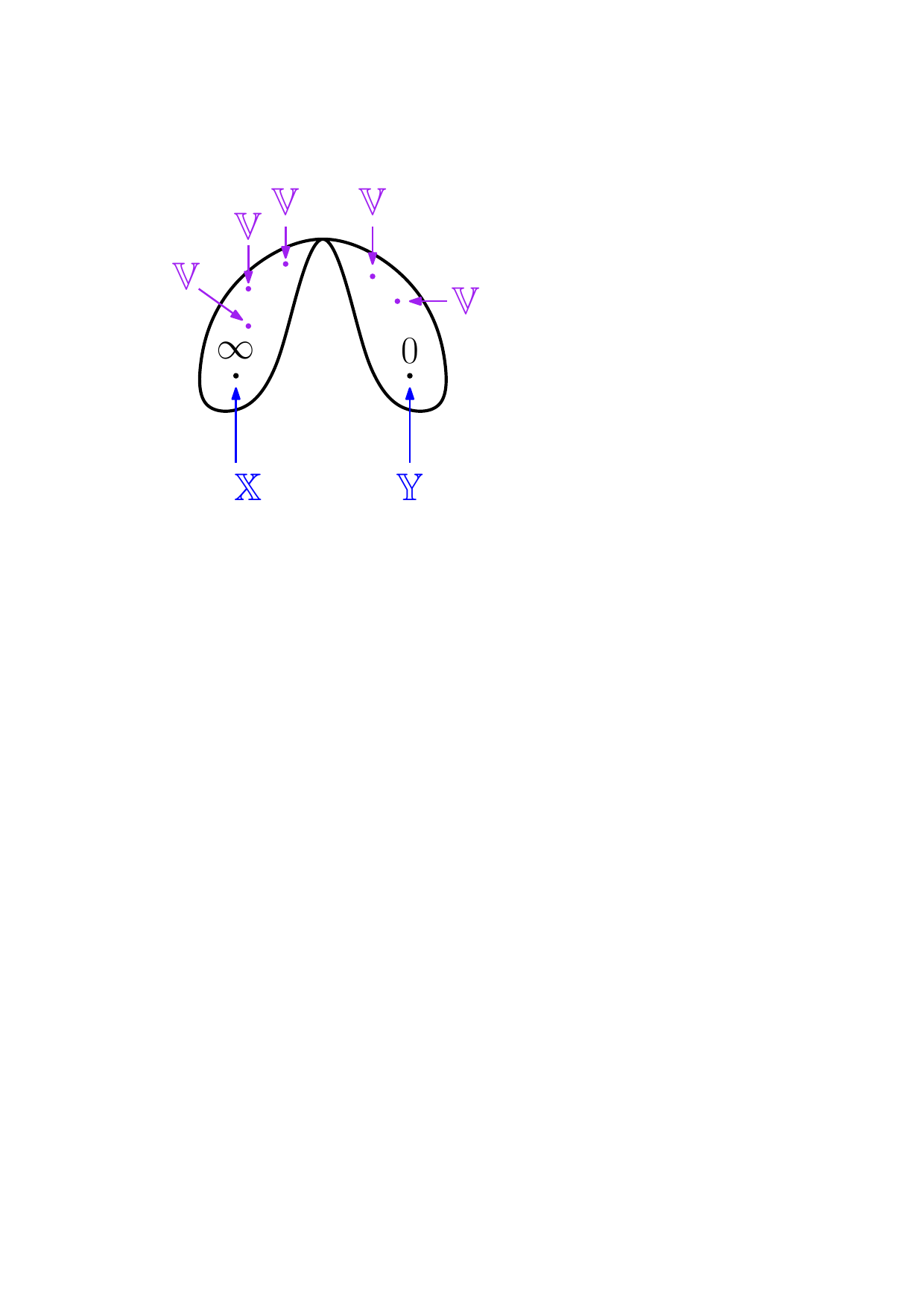}}}}~\Bigg)
		\end{align*}
	Therefore,
	\begin{enumerate}
		\item[(a)] the spaces of conformal blocks associated to $\Xbb,\Ybb,\Vbb^{\otimes (N-2)}$ do not form a vector bundle on $\ovl{\mathcal{M}}_{0,N}$ for $N\geq 4$.
		\item[(b)] the sheaf of coinvariants associated to $\Xbb,\Ybb,\Vbb^{\otimes (N-2)}$ is not locally free on $\ovl{\mathcal{M}}_{0,N}$ for $N\geq 4$. 
	\end{enumerate}
	See Rem. \ref{locallyfree} for details.
\end{itemize}

We remark that our result does not rule out the possibility that \eqref{intro3} holds as an equality if the definition of nodal conformal blocks is suitably modified. However, if such a new definition were to exist, one would not expect it to have a direct connection with the Zhu algebra $A(\Vbb)$ \cite{Zhu-modular-invariance}, since in the non-semisimple setting $A(\Vbb)$ exerts only weak control over modules that are not generated by their lowest weight subspaces.

\subsection*{Acknowledgment}
I am grateful to Chiara Damiolini for introducing the connection between nodal conformal blocks and mode transition algebras, to Xu Gao for answering many questions on mode transition algebras, and to Bin Gui for many enlightening conversations and valuable comments on this project.

\section{Preliminaries}
\subsection{Notation}
\label{notation}
Throughout this paper, we use the following notation.
\begin{itemize}
	\item $\Nbb=\{0,1,2,\cdots\}$, $\Zbb_+=\{1,2,\cdots\}$, $\Zbb_{\geq 2}=\{2,3,\cdots\}$.
	\item Let $X$ be a finite set. Then $\mathrm{Card}(X)$ denotes the cardinate of $X$.
	\item Let $\Re(z)$ be the real part of $z\in \Cbb$.
	\item Let $\Vect$ be the category of finite dimensional vector spaces over $\Cbb$.
	\item Let $\zeta$ be the standard coordinate of $\Cbb$.
	\item Throughout this paper, we fix an $\Nbb$-graded $C_2$-cofinite vertex operator algebra (VOA) $\Vbb=\bigoplus_{n\in \Nbb} \Vbb(n)$ with conformal vector $\cbf$ and vacuum vector $\idt$. Each nonzero vector $v$ in $\Vbb(n)$ is \textbf{homogeneous} of weight $\wt(v)=n$.
	\item For each $N\in \Nbb$, let $\Mod(\Vbb^{\otimes N})$ denote the category of grading-restricted generalized $\Vbb^{\otimes N}$-modules. $\Mod(\Vbb^{\otimes N})$ is an abelian category by \cite{Hua-projectivecover} (see also \cite{MNT10}). 
	\item Since $\Vbb$ is $C_2$-cofinite, it has only finitely many equivalence classes of irreducible $\Vbb$-modules (see \cite[Prop. 4.2]{Hua-projectivecover}). Denote by $\Irr$ a finite set of representatives of these classes.
	\item Let $\Xbb,\Ybb \in \Mod(\Vbb)$, and assume that $\Xbb$ is irreducible. Then $[\Ybb:\Xbb]$ denotes the multiplicity of $\Xbb$ in a composition series of $\Ybb$.
	\item If $\Wbb\in\Mod(\Vbb^{\otimes N})$ and $v\in\Vbb$, then the $i$-th vertex operator
	\begin{align*}
	Y_{\Wbb,i}(v,z)=\sum_{n\in\Zbb}Y_{\Wbb,i}(v)_nz^{-n-1}
	\end{align*}
	is $Y(\idt\otimes\cdots\otimes v\otimes\cdots\otimes\idt,z)$ with $v$ placed in the $i$-th component. We abbreviate $Y_{\Wbb,i}$ to $Y_i$ when no confusion arises. We also define
		\begin{align*}
		Y_i'(v,z)=Y_i(\MU(\upgamma_z)v,z^{-1})
		\end{align*}
		where $\MU(\upgamma_z)=e^{zL(1)}(-z^{-2})^{L(0)}$. Moreover, we expand
	\begin{align*}
	Y_i'(v,z)=\sum_{n\in\Zbb}Y_i'(v)_nz^{-n-1}
	\end{align*}
	Finally, we set $L_i(n)=Y_i(\cbf)_{n-1}$. 
    \item If $\Wbb\in\Mod(\Vbb^{\otimes N})$ and $\lambda_1,\dots,\lambda_N\in\Cbb$, then $\Wbb_{[\lambda_\blt]}$ is the subspace of all $w\in\Wbb$ such that for all $1\leq i\leq N$, $w$ is a generalized eigenvector of $L_i(0)$ with eigenvalue $\lambda_i$. Any $w\in \Wbb_{[\lambda_\blt]}$ is said to be \textbf{homogeneous} of weight $\lambda_\blt$. In this case, we write 
	\begin{gather*}
		\wt_i(w)=\lambda_i,\quad 1\leq i\leq N,\\
		\wt(w)=\sum_{i=1}^N \wt_i(w)=\sum_{i=1}^N \lambda_i.
	\end{gather*}
	The finite dimensional subspace $\Wbb_{[\leq\lambda_\blt]}$ is defined to be the direct sum of all $\Wbb_{[\mu_\blt]}$ where $\Re(\mu_i)\leq \Re(\lambda_i)$ for all $1\leq i\leq N$. Then the contragredient $\Vbb^{\otimes N}$-module of $\Wbb$, as a vector space, is
	\begin{align*}
	\Wbb'=\bigoplus_{\lambda_\blt\in\Cbb^N}(\Wbb_{[\lambda_\blt]})^*
	\end{align*}
	Then for each $w\in\Wbb,w'\in\Wbb$ we clearly have
	\begin{align*}
	\bk{Y_i(v,z)w,w'}=\bk{w,Y_i'(v,z)w'}
	\end{align*}
	Finally, the algebraic completion of $\Wbb$ is 
	\begin{align*}
	\ovl\Wbb=(\Wbb')^*=\prod_{\lambda_\blt\in\Cbb^N}\Wbb_{[\lambda_\blt]}
	\end{align*}
	\item If $\Wbb\in\Mod(\Vbb^{\otimes 2})$, we refer to the vertex operators $Y_1$ and $Y_2$ as $Y_+$ and $Y_-$, respectively, i.e.,
	\begin{align*}
		Y_+(v,z)=Y(v\otimes \idt,z),\quad Y_-(v,z)=Y(\idt\otimes v,z).
	\end{align*}
	for $v\in \Vbb$. We shall also denote $\Wbb$ by $(\Wbb,Y_+,Y_-)$. If $w\in \Wbb$ is homogeneous of weight $(\lambda_+,\lambda_-)$, then $\wt_+(w)=\lambda_+,\wt_-(w)=\lambda_-$ and $\wt(w)=\lambda_++\lambda_-$. 
\end{itemize}

\subsection{Non-lowest generated $\Vbb$-modules in $\Mod(\Vbb)$}
In this section, we focus on modules in $\Mod(\Vbb)$ that are not generated by their lowest weight subspaces.

\begin{df}
Let $\Xbb$ be a weak $\Vbb$-module. Define
\begin{align*}
	\Omega(\Xbb)=\{w\in \Xbb:Y(v)_n w=0 \text{ for all }v\in \Vbb,n\in \Zbb\text{ such that }\wt (v)-n-1<0 \}
\end{align*}
We refer to $\Omega(\Xbb)$ as the \textbf{lowest weight subspace} of $\Xbb$. The module $\Xbb$ is said to be \textbf{lowest generated} if it is generated by $\Omega(\Xbb)$.
\end{df}

\begin{rem}\label{lb5}
	Let $\Xbb$ be a weak $\Vbb$-module, and let $\mathcal{A}$ denote the subalgebra of $\End(\Xbb)$ generated by all operators $Y(v)_n$ where $v\in \Vbb,n\in \Zbb$. Let $\mathcal{A}_{\geq 0}$ be the unital subalgebra generated by all operators $Y(v)_n$ such that $\wt(v)-n-1\geq 0$. If $\mathcal{T}$ is a subspace of $\Omega(\Xbb)$, then it is easy to see that $\mathcal{A}\cdot \mathcal{T}=\mathcal{A}_{\geq 0}\cdot \mathcal{T}$. 
\end{rem}

\begin{df}\label{lb7}
	Let $\Xbb \in \Mod(\Vbb)$.
\begin{enumerate}
	\item[(a)] We say that $\Xbb$ is \textbf{singly generated} (resp. \textbf{singly lowest generated}) if there exists a homogeneous vector $w\in \Xbb$ (resp. \textbf{$w\in \Omega(\Xbb)$}) such that $\Xbb$ is generated by $w$. 
	\item[(b)] Assume that $\Xbb$ is singly generated. The \textbf{conformal weight} $\wt(\Xbb)$ of $\Xbb$ is defined to be $\wt(x)$ where $x\in \Xbb$ is homogeneous and satisfies $\Re(\wt(y))\geq \Re(\wt(x))$ for all $y\in \Xbb$.
\end{enumerate}
\end{df}

\begin{rem}\label{lb6}
	Let $\Xbb \in \Mod(\Vbb)$.
	\begin{enumerate}
		\item[(a)] Assume that $\Xbb$ is singly lowest generated. Let $w\in \Omega(\Xbb)$ be any homogeneous vector generating $\Xbb$. Then Rem. \ref{lb5} implies that $\wt(\Xbb)=\wt(w)$. 
		\item[(b)] Assume that $\Xbb$ is singly lowest generated. Let $\Ybb$ be a nonzero quotient $\Vbb$-module of $\Xbb$. Then $\Ybb$ is also singly lowest generated, and $\wt(\Ybb)=\wt(\Xbb)$. This is because the homogeneous vector generating $\Xbb$ must be sent by the quotient map to a homogeneous vector generating $\Ybb$.
	\end{enumerate}
\end{rem}

\begin{pp}
	If $\Xbb$ is an irreducible $\Vbb$-module, then it is singly lowest generated with conformal weight $\wt(\Xbb)=:\alpha$. Moreover, $\Omega(\Xbb)$ coincides with the subspace $\Xbb_{[\alpha]}$ consisting of eigenvectors of $L(0)$ with eigenvalue $\alpha$. 
\end{pp}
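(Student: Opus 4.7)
The plan is to locate the conformal weight $\alpha$ by analyzing the $L(0)$-spectrum of $\Xbb$, and then deduce both $\Xbb_{[\alpha]} \subseteq \Omega(\Xbb)$ and the reverse inclusion using irreducibility together with Rem. \ref{lb5}.

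First I would show that the generalized $L(0)$-eigenvalues of $\Xbb$ all lie in a single coset of $\Cbb/\Zbb$. Since a homogeneous mode $Y(v)_n$ shifts $L(0)$-generalized-eigenvalues by the integer $\wt(v) - n - 1$, the subspace $\bigoplus_{\lambda \in \beta + \Zbb} \Xbb_{[\lambda]}$ is a $\Vbb$-submodule for each coset $\beta + \Zbb \in \Cbb/\Zbb$. Irreducibility forces exactly one such coset to contribute, and combined with the grading-restriction built into $\Mod(\Vbb)$, this yields an $\alpha \in \Cbb$ with $\Xbb_{[\alpha]} \neq 0$ and $\Xbb = \bigoplus_{n \in \Nbb} \Xbb_{[\alpha + n]}$.

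Next, the inclusion $\Xbb_{[\alpha]} \subseteq \Omega(\Xbb)$ is a direct weight count: for $w \in \Xbb_{[\alpha]}$, homogeneous $v \in \Vbb$, and $n \in \Zbb$ with $\wt(v) - n - 1 < 0$, the vector $Y(v)_n w$ lies in $\Xbb_{[\alpha + \wt(v) - n - 1]}$, which vanishes because $\alpha + \wt(v) - n - 1 \notin \alpha + \Nbb$. For the reverse inclusion, I would first observe that $\Omega(\Xbb)$ is stable under $L(0) = Y(\cbf)_1$ via the commutator $[L(0), Y(v)_n] = (\wt(v) - n - 1) Y(v)_n$, so any $w \in \Omega(\Xbb)$ decomposes into generalized $L(0)$-eigenvectors that remain in $\Omega(\Xbb)$ (the spectral projections are polynomials in $L(0)$ on the finite-dimensional subspace containing $w$). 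Let $w \in \Omega(\Xbb)$ be nonzero and homogeneous of weight $\beta$. By Rem. \ref{lb5} the submodule $\mathcal{A} \cdot w$ equals $\mathcal{A}_{\geq 0} \cdot w$, and every mode in $\mathcal{A}_{\geq 0}$ shifts $L(0)$-eigenvalues by a nonnegative integer. Hence $\mathcal{A} \cdot w \subseteq \bigoplus_{k \in \Nbb} \Xbb_{[\beta + k]}$, and by irreducibility this submodule must be all of $\Xbb$. Since $\alpha$ appears in the spectrum we get $\alpha - \beta \in \Nbb$, while Step 1 already gives $\beta - \alpha \in \Nbb$, so $\beta = \alpha$.

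Finally, taking any nonzero $w \in \Xbb_{[\alpha]} = \Omega(\Xbb)$, the same argument via Rem. \ref{lb5} shows $\mathcal{A} \cdot w = \mathcal{A}_{\geq 0} \cdot w$ is a nonzero submodule, hence equals $\Xbb$ by irreducibility. Thus $\Xbb$ is singly lowest generated by $w$, and $\wt(\Xbb) = \wt(w) = \alpha$ per Def. \ref{lb7}(b). The main subtlety is the opening step: in the non-semisimple setting one cannot take for granted that the spectrum lies on a single translate of $\Nbb$, and invoking simplicity to rule out other $\Cbb/\Zbb$-cosets is essential before the subsequent weight-count arguments are meaningful.
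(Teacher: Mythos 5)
Your proof is correct and rests on the same mechanism as the paper's: a vector of $\Omega(\Xbb)$ generates only weights in $\beta+\Nbb$ (your use of Rem.~\ref{lb5} is exactly how the paper invokes Rem.~\ref{lb6}-(a)), and irreducibility then forces $\beta$ to be the minimal weight $\alpha$. You simply spell out the details the paper compresses — the single-coset/lower-truncation step and the non-vacuity of $\Omega(\Xbb)$ via $\Xbb_{[\alpha]}\subseteq\Omega(\Xbb)$, which the paper dismisses as ``obvious'' — so no substantive difference in approach.
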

\begin{proof}
	Since any nonzero homogeneous vector $w\in \Omega(\Xbb)$ generates $\Xbb$, it follows from Rem. \ref{lb6}-(a) that $\alpha=\wt(w)$. Thus $\Omega(\Xbb)\subset \Xbb_{[\alpha]}$. The reverse inclusion $\Omega(\Xbb)\supset \Xbb_{[\alpha]}$ is obvious.
\end{proof}

\begin{pp}\label{lb1}
	Let $\Xbb\in \Mod(\Vbb)$. Then $\Omega(\Xbb)=\oplus_{\lambda\in \Cbb}\Omega(\Xbb)_{[\lambda]}$. Consequently, $\Omega(\Xbb)$ is spanned by a set of homogeneous vectors. 
\end{pp}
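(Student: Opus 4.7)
The plan is to show that the grading of $\Xbb$ by generalized $L(0)$-eigenspaces restricts to $\Omega(\Xbb)$, i.e., if $w\in\Omega(\Xbb)$ is decomposed into its homogeneous components $w=\sum_\lambda w_\lambda$ with $w_\lambda\in\Xbb_{[\lambda]}$, then every $w_\lambda$ already lies in $\Omega(\Xbb)$. Once this is done, both assertions follow immediately.

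The key input is the shift formula for $L(0)$-weights. First I would observe that since $\Vbb$ is $\Nbb$-graded, $L(0)$ acts as the scalar $\wt(v)$ on each homogeneous $v\in\Vbb$. Combined with the standard identity $[L(0),Y(v,z)]=z\partial_zY(v,z)+Y(L(0)v,z)$, one extracts
\[
[L(0),Y(v)_n]=(\wt(v)-n-1)\,Y(v)_n
\]
for every homogeneous $v\in\Vbb$ and $n\in\Zbb$. From this and the nilpotent action of $L(0)-\mu$ on $\Xbb_{[\mu]}$, one concludes that
\[
Y(v)_n\,\Xbb_{[\lambda]}\subset \Xbb_{[\lambda+\wt(v)-n-1]}.
\]

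Now take $w\in\Omega(\Xbb)$ and write $w=\sum_\lambda w_\lambda$ as a finite sum with $w_\lambda\in\Xbb_{[\lambda]}$. Fix any homogeneous $v\in\Vbb$ and any $n\in\Zbb$ with $\wt(v)-n-1<0$. Applying $Y(v)_n$ gives
\[
0=Y(v)_n w=\sum_\lambda Y(v)_n w_\lambda,
\]
where each summand belongs to $\Xbb_{[\lambda+\wt(v)-n-1]}$. Because the shift $\wt(v)-n-1$ is a fixed scalar, the indices $\lambda+\wt(v)-n-1$ are pairwise distinct as $\lambda$ varies, so the generalized eigenspaces containing the summands are linearly independent. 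Hence $Y(v)_n w_\lambda=0$ for each $\lambda$. Since any $v\in\Vbb$ is a finite sum of homogeneous vectors (and the defining condition of $\Omega(\Xbb)$ reduces to the homogeneous case), this shows $w_\lambda\in\Omega(\Xbb)$.

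Therefore $\Omega(\Xbb)=\bigoplus_{\lambda\in\Cbb}\Omega(\Xbb)_{[\lambda]}$, and since each $\Omega(\Xbb)_{[\lambda]}\subset\Xbb_{[\lambda]}$ consists of homogeneous vectors of weight $\lambda$ by definition, $\Omega(\Xbb)$ is spanned by homogeneous vectors. There is no real obstacle in this proof; the only delicate point is confirming the weight-shift rule for \emph{generalized} eigenspaces, which is what the $\Nbb$-grading of $\Vbb$ secures.
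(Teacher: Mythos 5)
Your proof is correct and follows essentially the same route as the paper: decompose $w\in\Omega(\Xbb)$ into homogeneous components, observe that $Y(v)_n$ shifts the generalized $L(0)$-weight by $\wt(v)-n-1$, and conclude that the summands of $Y(v)_n w$ lie in distinct generalized eigenspaces, so each must vanish separately. The paper's version is terser (it simply asserts the summands are ``linearly independent''), and your explicit verification of the weight-shift rule via $[L(0),Y(v)_n]=(\wt(v)-n-1)Y(v)_n$ is exactly the detail being glossed over there.
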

\begin{proof}
	Using the decomposition $\Xbb=\oplus_{\lambda\in \Cbb}\Xbb_{[\lambda]}$, each $w\in \Omega(\Xbb)$ can be written as $w=\sum_{\lambda\in \Cbb}w_{\lambda}$, where $w_\lambda\in \Xbb_{[\lambda]}$. For each $n\in \Zbb$ and homogeneous $v\in \Vbb$, noting that $Y(v)_n w_{\lambda}$ are linearly independent for all $\lambda\in \Cbb$, we have $w_\lambda\in \Omega(\Xbb)_{[\lambda]}$. This proves the decomposition $\Omega(\Xbb)=\oplus_{\lambda\in \Cbb}\Omega(\Xbb)_{[\lambda]}$.
\end{proof}

We need projective covers of irreducible $\Vbb$-modules to study non-lowest generated $\Vbb$-module. The following theorem is due to \cite{Hua-projectivecover}.
\begin{thm}\label{projective}
	For each irreducible $\Vbb$-module $\Xbb\in \Mod(\Vbb)$, there exists a \textbf{projective cover} $\varphi_\Xbb:P_\Xbb\rightarrow \Xbb$ (abbreviated as $P_\Xbb$ when no confusion arises), i.e.,
	\begin{enumerate}
		\item[(a)] $P_\Xbb$ is a projective $\Vbb$-module in $\Mod(\Vbb)$.
		\item[(b)] $\varphi_\Xbb$ is an epimorphism in $\Mod(\Vbb)$. 
		\item[(c)] If $f:\Mbb\rightarrow P_\Xbb$ is a morphism in $\Mod(\Vbb)$ such that the composition
		\begin{align*}
			\Mbb \xrightarrow{f} P_\Xbb\xrightarrow{\varphi_\Xbb} \Xbb
		\end{align*}
		is an epimorphism, then $f$ itself is an epimorphism.
	\end{enumerate}
    Moreover, $P_\Xbb$ is unique up to isomorphisms in $\Mod(\Vbb)$. 
\end{thm}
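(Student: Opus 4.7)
The plan is to deduce the theorem from the structural properties of $\Mod(\Vbb)$ coming from $C_2$-cofiniteness, using the higher-level Zhu associative algebras $A_n(\Vbb)$ as the main technical device. First I would establish that every object of $\Mod(\Vbb)$ has finite length with composition factors drawn from the finite set $\Irr$. This uses the fact that, for $\Vbb$ being $\Nbb$-graded and $C_2$-cofinite, each $A_n(\Vbb)$ is a finite-dimensional associative algebra (so $\Mod(A_n(\Vbb))$ is a finite abelian category with enough projectives), and that every $\Wbb \in \Mod(\Vbb)$ is finitely generated, being generated by the finite-dimensional subspace $\Wbb_{[\leq \lambda_\bullet]}$ for $\lambda$ sufficiently large.

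Next I would produce enough projectives in $\Mod(\Vbb)$. The key input is the adjunction between the $A_n$-induction functor $\mathcal{M}_n : \Mod(A_n(\Vbb)) \to \Mod(\Vbb)$ (a generalized Verma/universal construction) and the truncation functor $\Wbb \mapsto \Wbb_{[\leq n]}$ which remembers the $A_n(\Vbb)$-module structure on the bottom $n+1$ weight spaces. Projectivity is transported across this adjunction, provided one first shows that $\mathcal{M}_n$ applied to a finite-dimensional $A_n(\Vbb)$-module actually lands in $\Mod(\Vbb)$ (i.e.\ has finite-dimensional weight spaces and a lower bound on weights): this is the most delicate ingredient and is where $C_2$-cofiniteness is genuinely needed, via Buhl-type spanning results and the finite-dimensionality of all $A_m(\Vbb)$ for $m \leq n$. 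Given an irreducible $\Xbb$, one chooses $n$ so that $\Xbb$ is generated by $\Xbb_{[\leq n]}$, lifts the $A_n(\Vbb)$-module $\Xbb_{[\leq n]}$ to its projective cover $Q$ in $\Mod(A_n(\Vbb))$, and applies $\mathcal{M}_n$ to obtain a projective object $P$ of $\Mod(\Vbb)$ equipped with an epimorphism $P \twoheadrightarrow \Xbb$.

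To refine such an epimorphism into an actual projective cover satisfying (c), I would invoke the standard fact: in any abelian category of finite length with enough projectives, every object admits a projective cover. One writes $P$ as a finite direct sum of indecomposable projectives (possible by Krull-Schmidt, which holds since the endomorphism rings of indecomposables in a finite-length category are local), discards summands whose image in $\Xbb$ is redundant, and takes $P_\Xbb$ to be the remaining summand. Property (c) then follows because, after choosing a direct-sum decomposition of $\Mbb/\Ker(\varphi_\Xbb \circ f)$ (equivalently, by Nakayama-type reasoning applied to the Jacobson radical of the finite-dimensional algebra $\End_\Vbb(P_\Xbb)$), any $f$ for which $\varphi_\Xbb \circ f$ is surjective must itself be surjective onto $P_\Xbb$. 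Uniqueness up to isomorphism is formal: given two projective covers $P_\Xbb, P'_\Xbb$, projectivity yields lifts $g:P_\Xbb \to P'_\Xbb$ and $g':P'_\Xbb \to P_\Xbb$ over $\id_\Xbb$, and applying (c) to $g$ and $g'$ forces both to be epimorphisms between isomorphic finite-length projectives, hence isomorphisms.

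The main obstacle is the projectivity transport step, which requires one to verify simultaneously that $\mathcal{M}_n$ of a projective $A_n(\Vbb)$-module belongs to $\Mod(\Vbb)$ and that it is projective there. The subtlety is that the induction is defined by a universal property at the level of weak modules, so one must show that no grading-restriction is lost and that the adjunction descends cleanly to $\Mod(\Vbb) \subset \Mod^{\mathrm{weak}}(\Vbb)$; this is precisely the content of \cite{Hua-projectivecover} and ultimately what forces the use of $C_2$-cofiniteness in an essential way.
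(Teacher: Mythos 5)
The paper does not prove this statement at all: it is quoted as a known result and attributed entirely to \cite{Hua-projectivecover}, so there is no in-paper argument to compare yours against. Your outline is, in substance, a reconstruction of the strategy of that cited reference (finite length of $\Mod(\Vbb)$ from $C_2$-cofiniteness, enough projectives via induction from projective $A_n(\Vbb)$-modules, then Krull--Schmidt plus the standard finite-length formalism to extract a projective cover and prove uniqueness), and the finite-length/Krull--Schmidt endgame and the uniqueness argument are correct as stated. One caution on the middle step: the right adjoint of the induction functor $\mathcal{M}_n$ is not the naive truncation $\Wbb\mapsto\Wbb_{[\leq n]}$ but the functor $\Omega_n$ of vectors annihilated by the weight-lowering modes, and transporting projectivity across the adjunction requires knowing that a surjection $\Wbb_1\twoheadrightarrow\Wbb_2$ in $\Mod(\Vbb)$ induces a surjection on the relevant part of $\Omega_n$ --- this is not automatic from exactness of taking generalized $L(0)$-eigenspaces, and together with grading-restrictedness of the induced module it is exactly the technical heart of \cite{Hua-projectivecover}. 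You correctly flag this as the delicate ingredient and defer it to the reference, which is the same thing the paper does, so your proposal is acceptable as a proof outline but not self-contained at that point.
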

\begin{rem}\label{composition}
	Let $\Xbb, \Ybb \in \Mod(\Vbb)$, and assume that $\Xbb$ is irreducible. Recall the definition of $[\Ybb : \Xbb]$ from Sec. \ref{notation}. It is well known from the theory of abelian category that $\dim\Hom_\Vbb(P_\Xbb,\Ybb)=[\Ybb:\Xbb]$ (cf. \cite[(1.7)]{EGNO}).
\end{rem}

\begin{pp}\label{lb2}
	Let $\Xbb$ be an irreducible $\Vbb$-module. Then its projective cover $P_\Xbb$ is singly generated. Therefore, by Def. \ref{lb7}, the conformal weight $\wt(P_\Xbb)$ is well-defined.
\end{pp}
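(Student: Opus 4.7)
The plan is to exhibit a single homogeneous vector $w\in P_\Xbb$ that generates $P_\Xbb$, using property (c) of Theorem \ref{projective} as the main lever: any submodule $\Mbb\subseteq P_\Xbb$ whose image under $\varphi_\Xbb$ is all of $\Xbb$ must in fact equal $P_\Xbb$.

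First I would pick a nonzero homogeneous $x\in \Omega(\Xbb)=\Xbb_{[\alpha]}$, where $\alpha=\wt(\Xbb)$, using the preceding proposition. Since $\Xbb$ is irreducible, $x$ generates $\Xbb$. Because $\varphi_\Xbb$ is a morphism in $\Mod(\Vbb)$ it commutes with $L(0)$ and hence sends $(P_\Xbb)_{[\mu]}$ into $\Xbb_{[\mu]}$ for every $\mu\in\Cbb$. Choosing any preimage of $x$, decomposing it along the generalized $L(0)$-eigenspaces of $P_\Xbb$, and retaining only the $\alpha$-component, I obtain a homogeneous vector $w\in (P_\Xbb)_{[\alpha]}$ with $\varphi_\Xbb(w)=x$; in particular $w\ne 0$.

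Next, let $\Mbb$ be the $\Vbb$-submodule of $P_\Xbb$ generated by $w$; since $\Mod(\Vbb)$ is abelian, $\Mbb\in \Mod(\Vbb)$. The composition $\Mbb\hookrightarrow P_\Xbb\xrightarrow{\varphi_\Xbb}\Xbb$ has image a nonzero submodule of the irreducible module $\Xbb$, and hence equals $\Xbb$. By Theorem \ref{projective}(c) the inclusion $\Mbb\hookrightarrow P_\Xbb$ is then an epimorphism, so $\Mbb=P_\Xbb$, and $P_\Xbb$ is singly generated in the sense of Definition \ref{lb7}(a).

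Finally, for the well-definedness of $\wt(P_\Xbb)$, I would observe that every element of $P_\Xbb$ is a linear combination of vectors of the form $Y(v_1)_{n_1}\cdots Y(v_k)_{n_k}w$ with homogeneous $v_i\in \Vbb$, each of weight $\alpha+\sum_i(\wt(v_i)-n_i-1)\in \alpha+\Zbb$. Thus the weight spectrum of $P_\Xbb$ lies in the single coset $\alpha+\Zbb\subset\Cbb$, and grading-restrictedness ensures the minimum real weight is attained at a unique value $\alpha+k$, yielding an unambiguous $\wt(P_\Xbb)$ per Definition \ref{lb7}(b). I do not anticipate a serious obstacle; the only mildly delicate point is producing a homogeneous lift of $x$, and this is resolved by the $L(0)$-equivariance of $\varphi_\Xbb$.
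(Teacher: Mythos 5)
Your proof is correct and follows essentially the same route as the paper: take a homogeneous $w\in P_\Xbb$ with $\varphi_\Xbb(w)\ne 0$, note that the submodule it generates surjects onto the irreducible $\Xbb$, and invoke property (c) of Theorem \ref{projective} to conclude that this submodule is all of $P_\Xbb$. Your extra care in lifting a specific lowest-weight vector of $\Xbb$ and your closing paragraph verifying that the weights of a singly generated module lie in a single coset $\alpha+\Zbb$ (so that the minimal-real-part weight is unique) are harmless additions that the paper leaves implicit.
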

\begin{proof}
	Let $w\in P_\Xbb$ be a nonzero homogeneous vector such that $\varphi_\Xbb(w)\ne 0$. Define $\Wbb$ to be the submodule of $P_\Xbb$ generated by $w$. Then the composition
	\begin{align}\label{eq4}
		\Wbb\hookrightarrow P_\Xbb \twoheadrightarrow \Xbb
	\end{align}
	is nonzero. Since $\Xbb$ is irreducible, \eqref{eq4} must be surjective. By Thm. \ref{projective}-(c), it follows that the inclusion $\Wbb \hookrightarrow P_\Xbb$ is surjective. Hence $P_\Xbb = \Wbb$.
\end{proof}
\begin{rem}\label{lb4}
	Let $\Xbb$ be an irreducible $\Vbb$-module. Then $\wt(\Xbb)\in \wt(P_\Xbb)+\Nbb$. In particular, 
	\begin{align*}
		\Re(\wt(P_\Xbb))\leq \Re(\wt(\Xbb)).
	\end{align*}
\end{rem}

\begin{pp}\label{lb3}
	Let $\Xbb\in \Mod(\Vbb)$ be an irreducible $\Vbb$-module. The following conditions are equivalent:
	\begin{enumerate}
		\item[(a)] $P_\Xbb$ is not lowest generated.
		\item[(b)] $\Re(\wt(P_\Xbb))<\Re(\wt(\Xbb))$.
		\item[(c)] $P_\Xbb$ has a composition factor $\Ybb\in \Mod(\Vbb)$ with $\Re(\wt(\Ybb))<\Re(\wt(\Xbb))$.
	\end{enumerate}
\end{pp}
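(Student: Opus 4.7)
The plan is to establish (b)$\Leftrightarrow$(c) directly from composition-series considerations, then prove (a)$\Leftrightarrow$(b) via two contrapositives, each pivoting on the interaction between $\Omega(P_\Xbb)$ and the projection $\varphi_\Xbb$.

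For (b)$\Leftrightarrow$(c): since $\Vbb$ is $C_2$-cofinite, $P_\Xbb$ admits a finite composition series, and generalized $L(0)$-eigenspaces commute with subquotients, so the set of weights of $P_\Xbb$ equals the union of the sets of weights of its composition factors. Each irreducible composition factor $\Ybb$ has smallest real weight $\Re(\wt(\Ybb))$, giving $\Re(\wt(P_\Xbb))=\min_\Ybb \Re(\wt(\Ybb))$. Since $\Xbb$ is itself a composition factor (as $\varphi_\Xbb$ is an epimorphism to the irreducible $\Xbb$), the equivalence of (b) and (c) follows immediately.

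For (a)$\Rightarrow$(b), I argue the contrapositive: assuming $\Re(\wt(P_\Xbb))=\Re(\wt(\Xbb))$---the only alternative to (b) in view of Rem.~\ref{lb4}---I show $P_\Xbb$ is lowest generated. Because $\varphi_\Xbb$ preserves generalized $L(0)$-eigenspaces and is surjective, lifting a nonzero element of $\Xbb_{[\wt(\Xbb)]}$ yields a homogeneous $w_0\in P_\Xbb$ of weight $\wt(\Xbb)$ with $\varphi_\Xbb(w_0)\ne 0$. Since $\Re(\wt(w_0))$ equals the minimum real weight in $P_\Xbb$, every lowering operator $Y(v)_n$ (those with $\wt(v)-n-1<0$) would produce a vector of strictly smaller real weight and hence annihilates $w_0$; thus $w_0\in\Omega(P_\Xbb)$. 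The argument in the proof of Prop.~\ref{lb2} then shows $w_0$ alone generates $P_\Xbb$, so $\Omega(P_\Xbb)$ generates $P_\Xbb$.

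For (b)$\Rightarrow$(a), again by contraposition, suppose $P_\Xbb$ is lowest generated. By Prop.~\ref{lb1}, decompose $\Omega(P_\Xbb)=T_0\oplus T'$ with $T_0=\Omega(P_\Xbb)_{[\wt(\Xbb)]}$ and $T'=\bigoplus_{\mu\ne\wt(\Xbb)}\Omega(P_\Xbb)_{[\mu]}$. For homogeneous $w\in T'$, one has $\varphi_\Xbb(w)\in\Omega(\Xbb)=\Xbb_{[\wt(\Xbb)]}$ while $w$ has weight $\mu\ne\wt(\Xbb)$, forcing $\varphi_\Xbb(w)=0$; hence $\varphi_\Xbb(\Omega(P_\Xbb))=\varphi_\Xbb(T_0)$. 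Setting $\Mbb_0=\mc A\cdot T_0\subseteq P_\Xbb$, one computes $\varphi_\Xbb(\Mbb_0)=\mc A\cdot\varphi_\Xbb(T_0)=\mc A\cdot\varphi_\Xbb(\Omega(P_\Xbb))=\varphi_\Xbb(P_\Xbb)=\Xbb$, where the third equality uses that $P_\Xbb$ is lowest generated. Applying Thm.~\ref{projective}(c) to $\Mbb_0\hookrightarrow P_\Xbb\twoheadrightarrow\Xbb$ then forces $\Mbb_0=P_\Xbb$, and Rem.~\ref{lb5} rewrites this as $P_\Xbb=\mc A_{\geq 0}\cdot T_0$; since $\mc A_{\geq 0}$ cannot strictly decrease real weight, every weight of $P_\Xbb$ has real part $\geq\Re(\wt(\Xbb))$, yielding equality with $\Re(\wt(P_\Xbb))$ via Rem.~\ref{lb4}. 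The main obstacle I anticipate is precisely this final step---passing from ``$\varphi_\Xbb(\Omega(P_\Xbb))$ generates $\Xbb$'' to ``$T_0$ itself generates $P_\Xbb$''---which is where the defining property of a projective cover, Thm.~\ref{projective}(c), is indispensable.
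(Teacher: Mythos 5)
Your proof is correct and takes essentially the same route as the paper's: (b)$\Leftrightarrow$(c) via the weights of composition factors, (a)$\Rightarrow$(b) by lifting a nonzero vector of $\Xbb_{[\wt(\Xbb)]}$ to a homogeneous vector of minimal real weight lying in $\Omega(P_\Xbb)$, and (b)$\Rightarrow$(a) by using Thm.~\ref{projective}(c) to show that a homogeneous piece of $\Omega(P_\Xbb)$ already generates $P_\Xbb$. The only cosmetic difference is in the last step: the paper reduces to a single homogeneous generator $w_i\in\Omega(P_\Xbb)$ and then invokes Rem.~\ref{lb6}(b) on singly lowest generated modules, whereas you generate from the full weight-$\wt(\Xbb)$ component $T_0$ (after noting $\varphi_\Xbb$ kills the other weight components of $\Omega(P_\Xbb)$) and conclude directly from the fact that $\mathcal{A}_{\geq 0}$ does not lower real weights; both arguments are valid.
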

\begin{proof}
	Assume (b). We show that $P_\Xbb$ is not lowest generated. Suppose instead that $P_\Xbb$ is lowest generated. By Prop. \ref{lb1}, there exists a set of homogeneous vectors $\{w_i\in \Omega(P_\Xbb):i\in I\}$ spanning $\Omega(P_\Xbb)$, and for each $i \in I$ let $\Wbb_i$ be the submodule of $P_\Xbb$ generated by $w_i$. Then $\Wbb_i$ is singly lowest generated for each $i\in I$. Since $P_\Xbb$ is lowest generated, there exists an epimorphism 
	\begin{align*}
	\sum_{i\in I}\Wbb_i\twoheadrightarrow P_\Xbb
    \end{align*}
	Hence, for some $i$, the composition $\Wbb_i \hookrightarrow P_\Xbb \twoheadrightarrow \Xbb$ is nonzero, and therefore surjective because $\Xbb$ is irreducible. By Thm. \ref{projective}-(c), the inclusion $\Wbb_i \hookrightarrow P_\Xbb$ is an epimorphism. It follows from Rem. \ref{lb6}-(b) that $P_\Xbb$ is singly lowest generated. Noting that $\Xbb$ is a nonzero quotient $\Vbb$-module of $P_\Xbb$, it follows from Rem. \ref{lb6}-(b) again that $\wt(\Xbb)=\wt(P_\Xbb)$, contradicting (b). Therefore, (a) holds.

	Assume (a). We show that $\Re(\wt(P_\Xbb)) \ne \Re(\wt(\Xbb))$. Suppose instead that $\Re(\wt(P_\Xbb)) = \Re(\wt(\Xbb))$. By Rem. \ref{lb4}, this forces $\wt(P_\Xbb) = \wt(\Xbb)=:\alpha$ and hence $(P_\Xbb)_{[\alpha]}\subset \Omega(P_\Xbb)$. Note that $\varphi_\Xbb$ restricts to a surjective map
	\begin{align*}
		\varphi_\Xbb:(P_\Xbb)_{[\alpha]}\twoheadrightarrow \Xbb_{[\alpha]}
	\end{align*}
	Since $\Xbb_{[\alpha]}$ is nonzero, there exists $0\ne w\in (P_\Xbb)_{[\alpha]}$ with $\varphi_\Xbb(w)\ne 0$. Let $\Wbb$ be the submodule of $P_\Xbb$ generated by $w$. Then the composition $\Wbb\hookrightarrow P_\Xbb\twoheadrightarrow \Xbb$ is nonzero, hence surjective because $\Xbb$ is irreducible. By Thm. \ref{projective}-(c), the inclusion $\Wbb\hookrightarrow P_\Xbb$ is surjective, so $P_\Xbb$ is generated by $w\in (P_\Xbb)_{[\alpha]}\subset \Omega(P_\Xbb)$. Thus $P_\Xbb$ is lowest generated, contradicting (a). Therefore, (b) holds.

	In general, for any $\Mbb\in \Mod(\Vbb)$, the set of weights of $\Mbb$ coincides with the set of weights of its composition factors. Hence
	\begin{align*}
		\Re(\wt(P_\Xbb))=\min_{\Ybb} \Re(\wt(\Ybb))
	\end{align*} 
	where $\Ybb$ ranges over all composition factors of $P_\Xbb$. Therefore, (b) and (c) are equivalent.
\end{proof}

\begin{pp}\label{lowest2}
	The following are equivalent:
	\begin{enumerate}
		\item[(a)] There exists $\Mbb\in \Mod(\Vbb)$ that is not lowest generated.
		\item[(b)] There exists an irreducible $\Vbb$-module $\Xbb\in \Mod(\Vbb)$ such that $P_\Xbb$ is not lowest generated.
	\end{enumerate}
\end{pp}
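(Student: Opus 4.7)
The direction (b) $\Rightarrow$ (a) is immediate: simply take $\Mbb:=P_\Xbb$. For (a) $\Rightarrow$ (b) I would work by contrapositive, assuming that every projective cover $P_\Xbb$ of an irreducible $\Xbb\in\Mod(\Vbb)$ is lowest generated and deducing that every $\Mbb\in\Mod(\Vbb)$ is lowest generated.

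The key mechanism is the following trio of easy observations: (i) any morphism $f:\Mbb\to\Wbb$ in $\Mod(\Vbb)$ sends $\Omega(\Mbb)$ into $\Omega(\Wbb)$, since $Y_\Wbb(v)_n f(w)=f(Y_\Mbb(v)_n w)=0$ whenever $w\in\Omega(\Mbb)$ and $\wt(v)-n-1<0$; (ii) $\Omega$ commutes with finite direct sums; (iii) if $f:\Mbb\twoheadrightarrow\Wbb$ is surjective and $S\subset\Mbb$ generates $\Mbb$, then $f(S)$ generates $\Wbb$, because $f$ intertwines the vertex operator actions. Combining (i)--(iii): whenever $P=\bigoplus_{i=1}^k P_{\Xbb_i}$ is a finite direct sum of projective covers of irreducibles and $f:P\twoheadrightarrow\Mbb$ is surjective, the hypothesis forces $\Omega(P)$ to generate $P$, hence $f(\Omega(P))\subset\Omega(\Mbb)$ generates $\Mbb$, so $\Mbb$ is lowest generated.

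It therefore suffices to construct, for every $\Mbb\in\Mod(\Vbb)$, a surjection $f:P\twoheadrightarrow\Mbb$ from a finite direct sum $P=\bigoplus_{i=1}^k P_{\Xbb_i}$ of projective covers of irreducibles. Since $\Vbb$ is $C_2$-cofinite, every object of $\Mod(\Vbb)$ has finite composition length, so I would induct on the length. The base case is trivial. For the inductive step, pick a simple quotient $q:\Mbb\twoheadrightarrow\Xbb$ with kernel $K$, apply the inductive hypothesis to $K$ to obtain a projective surjection $P'\twoheadrightarrow K$, and use projectivity of $P_\Xbb$ (Thm.~\ref{projective}) to lift $\varphi_\Xbb:P_\Xbb\twoheadrightarrow\Xbb$ through $q$ to a map $\wtd g:P_\Xbb\to\Mbb$ with $q\circ\wtd g=\varphi_\Xbb$. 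The combined map $P'\oplus P_\Xbb\to\Mbb$ is then surjective, because its image contains $K$ and maps onto $\Mbb/K=\Xbb$.

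The only nontrivial ingredient is the inductive construction of the projective surjection in the last paragraph, but this is a standard move in finite-length abelian categories and is fully enabled by the existence of projective covers of simples (Thm.~\ref{projective}). Everything else reduces to routine bookkeeping directly from the definition of $\Omega$.
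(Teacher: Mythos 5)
Your proposal is correct and follows essentially the same route as the paper: assume every $P_\Xbb$ is lowest generated, surject any $\Mbb$ from a finite direct sum of projective covers, and transport the lowest-generation property along the surjection. The only difference is cosmetic --- the paper simply cites that $\Gbb=\bigoplus_{\Xbb\in\Irr}P_\Xbb$ is a projective generator so that $\Gbb^{\oplus n}\twoheadrightarrow\Mbb$ exists, whereas you build that surjection by induction on composition length and spell out why images of $\Omega$ land in $\Omega$; both are routine and the substance is identical.
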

\begin{proof}
	It suffices to show that (a) implies (b). Assume (a). Suppose instead that $P_\Xbb$ is lowest generated for every irreducible $\Xbb \in \Mod(\Vbb)$. Let 
	\begin{align*}
		\Gbb:=\oplus_{\Xbb\in \Irr} P_\Xbb
	\end{align*}
	where $\Irr$ is a finite set of representatives of equivalence classes of irreducibles in $\Mod(\Vbb)$. Then $\Gbb$ is lowest generated and serves as a projective generator of $\Mod(\Vbb)$. Hence for any $\Mbb \in \Mod(\Vbb)$ there exists $n \in \Zbb_+$ with an epimorphism $\Gbb^{\oplus n} \twoheadrightarrow \Mbb$, implying that $\Mbb$ is lowest generated. Thus every object in $\Mod(\Vbb)$ would be lowest-generated, contradicting (a).
\end{proof}

\begin{co}\label{triplet}
	Let $p \ge 2$ be an integer, and let $\mathcal{W}_p$ denote the triplet $W$-algebra. Then there exists a module $\Mbb \in \Mod(\mathcal{W}_p)$ that is not lowest generated.
\end{co}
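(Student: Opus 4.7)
The plan is to reduce the corollary to the abstract criteria in Prop. \ref{lowest2} and Prop. \ref{lb3}, and then invoke the known representation theory of the triplet algebra. By Prop. \ref{lowest2}, it suffices to find a single irreducible $\Xbb \in \Mod(\mathcal{W}_p)$ whose projective cover $P_\Xbb$ is not lowest generated. By the equivalence (a)$\Leftrightarrow$(c) in Prop. \ref{lb3}, this in turn reduces to finding such an $\Xbb$ together with a composition factor $\Ybb$ of $P_\Xbb$ satisfying $\Re(\wt(\Ybb)) < \Re(\wt(\Xbb))$.

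To produce such a pair, I would appeal to the explicit representation theory of $\mathcal{W}_p$ developed in \cite{AM-triplet,NagatomoTsuchiya2011,TW-triplet}, which classifies the $2p$ inequivalent simples, lists their conformal weights, and describes the Loewy diagrams of all indecomposable projectives. These works show that, for every $p\geq 2$, there are non-semisimple blocks in $\Mod(\mathcal{W}_p)$ containing two simples of distinct conformal weights differing by a positive rational number, and that the indecomposable projective covers in such a block have length $4$ with both simples appearing as composition factors. Choosing $\Xbb$ to be the simple of strictly larger real conformal weight within a non-semisimple block, its projective cover $P_\Xbb$ then contains the other simple $\Ybb$ as a composition factor with $\Re(\wt(\Ybb)) < \Re(\wt(\Xbb))$. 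Applying Prop. \ref{lb3} shows that $P_\Xbb$ is not lowest generated, and taking $\Mbb = P_\Xbb$ in Prop. \ref{lowest2} gives the corollary.

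The hard part will be purely bookkeeping: one must translate between the notation conventions of \cite{AM-triplet,NagatomoTsuchiya2011,TW-triplet} and those used here, and verify uniformly in $p\geq 2$ that each non-semisimple block indeed contains simples of distinct real conformal weights whose projective cover exhibits the composition factor of strictly smaller real weight. For $p=2$ this is immediate: the projective cover of the simple of conformal weight $1$ contains a simple of conformal weight $-1/8$ as a composition factor. The same mechanism, captured by the explicit Loewy diagrams in the cited works, persists for all $p\geq 2$.
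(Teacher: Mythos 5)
Your strategy is exactly the paper's: reduce via Prop.~\ref{lowest2} and the equivalence (a)$\Leftrightarrow$(c) of Prop.~\ref{lb3} to exhibiting a non-semisimple block of $\Mod(\mathcal{W}_p)$ whose projective cover of the higher-weight simple contains the lower-weight simple as a composition factor; the paper instantiates this with $\Xbb=X_1^-$, whose projective cover $P_1^-$ has socle series with middle layer $X_{p-1}^+\oplus X_{p-1}^+$, and $\Re(\wt(X_{p-1}^+))<\Re(\wt(X_1^-))$.

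However, your one concrete verification is wrong. For $p=2$ the simple of conformal weight $-1/8$ (and likewise the one of weight $3/8$) is $X_2^\pm$, which is simultaneously simple and projective and sits in a semisimple block; it does \emph{not} occur as a composition factor of the projective cover of the weight-$1$ simple $X_1^-$. The non-semisimple block for $p=2$ is $\{X_1^+,X_1^-\}$ with weights $0$ and $1$, and the composition factors of $P_1^-$ are $X_1^-,X_1^+,X_1^+,X_1^-$; the strictly smaller weight appearing is $0$, not $-1/8$. This matters beyond bookkeeping: the blocks of $\Mod(\mathcal{W}_p)$ are $\{X_s^+,X_{p-s}^-\}$ for $1\le s\le p-1$ together with the two singleton blocks $\{X_p^\pm\}$, and had you picked $\Xbb=X_p^\pm$ the argument would fail since $P_{X_p^\pm}=X_p^\pm$ is lowest generated. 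With the block structure corrected (any $\{X_s^+,X_{p-s}^-\}$, $1\le s\le p-1$, works, since $\wt(X_{p-s}^-)-\wt(X_s^+)=\frac{(p+s)^2-(p-s)^2}{4p}>0$), your argument goes through and coincides with the paper's.
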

\begin{proof}
	Since $\mathcal{W}_p$ is $C_2$-cofinite \cite{AM-triplet}, all of the preceding results for a general $C_2$-cofinite VOA $\Vbb$ apply. The abelian category structure of $\Mod(\mathcal{W}_p)$ is due to \cite{NagatomoTsuchiya2011}. We follow the terminology of \cite[Sec. 3.1]{TW-triplet}. Up to isomorphisms, the irreducible $\mathcal{W}_p$-modules are $X_s^+$ and $X_s^-$ for $1 \le s \le p$. Among them, $X_1^-$ is the unique module with the maximal conformal weight 
	\begin{align*}
		\wt(X_1^-)=\frac{1}{4p}((2p-1)^2-(p-1)^2).	
	\end{align*}
	 Let $P_1^-$ denote its projective cover. The socle series of $P_1^-$ is given by
  \begin{align*}
	X_1^-=S_0(P_1^-)\subset S_1(P_1^-)\subset S_2(P_1^-)=P_1^-
  \end{align*}
  with $S_1(P_1^-)/S_0(P_1^-)\simeq X_{p-1}^+\oplus X_{p-1}^+$ and $S_2(P_1^-)/S_1(P_1^-)\simeq X_1^-$. Since
  \begin{align*}
	\frac{1}{4p}(1-(p-1)^2)=\Re(\wt(X_{p-1}^+))<\Re(\wt(X_1^-))=\frac{1}{4p}((2p-1)^2-(p-1)^2),
  \end{align*}
  condition (c) of Prop. \ref{lb3} holds, and hence $P_1^-$ is not lowest generated.
\end{proof}

\begin{co}\label{symplectic}
	For $d\in \Zbb_+$, let $SF_{d}^+$ denote the even symplectic fermion VOA. Then there exists $\Mbb\in \Mod(SF_{d}^+)$ that is not lowest generated.
\end{co}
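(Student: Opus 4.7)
The plan is to mirror the proof of Cor. \ref{triplet} almost verbatim. First, the even symplectic fermion VOA $SF_d^+$ is $\Nbb$-graded and $C_2$-cofinite (this is one of the basic inputs from \cite{Runkel2014,GR-symplectic,FGR-symplectic}), so every result proved earlier in this section for a general $C_2$-cofinite $\Vbb$ applies, and in particular the notion of projective cover from Thm. \ref{projective} is available inside $\Mod(SF_d^+)$. By Prop. \ref{lowest2}, it suffices to produce a single irreducible $\Xbb \in \Mod(SF_d^+)$ whose projective cover $P_\Xbb$ is not lowest generated; by Prop. \ref{lb3}-(c) it is enough to exhibit a composition factor $\Ybb$ of $P_\Xbb$ with $\Re(\wt(\Ybb)) < \Re(\wt(\Xbb))$.

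Next, I would import from \cite{Kausch1995,GaberdielKausch1999,Kausch2000,Runkel2014,GR-symplectic,FGR-symplectic} the classification of irreducible $SF_d^+$-modules together with the socle filtration of their projective covers. The category $\Mod(SF_d^+)$ has four simple objects, arising from the untwisted and $\Zbb/2$-twisted sectors of the symplectic fermion superalgebra together with their parity decompositions, with explicitly known conformal weights. Among these four simples, I would pick the one $\Xbb$ whose conformal weight has maximal real part (the role played by $X_1^-$ in the triplet case), and read off from the tabulated socle series a composition factor $\Ybb$ of $P_\Xbb$ sitting strictly below $\Xbb$ in conformal weight (the role played by $X_{p-1}^+$). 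The existence of such a $\Ybb$ is structural: for this ``top'' simple, the projective cover is always of length at least three and acquires in its middle layer a simple from the opposite sector, whose weight is forced to be smaller in real part than $\wt(\Xbb)$.

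Once such a $\Ybb$ has been identified, Prop. \ref{lb3}-(c) immediately gives that $P_\Xbb$ is not lowest generated, and then Prop. \ref{lowest2} furnishes the desired $\Mbb \in \Mod(SF_d^+)$ that is not lowest generated; one may in fact take $\Mbb = P_\Xbb$ directly. No further VOA computation is needed beyond invoking the cited projective-cover structure.

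The main obstacle is purely bibliographic rather than conceptual: one must cite a reference that addresses \emph{all} $d \in \Zbb_+$ uniformly (rather than only the classical $d=1$ case that is equivalent to the symplectic-fermion triplet model) and that records the socle filtration of the projective cover of the maximal-weight simple in a form compatible with our normalization of $L(0)$. Once this is pinned down, the argument is a direct transcription of the triplet proof.
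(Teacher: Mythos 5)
Your proposal follows essentially the same route as the paper: identify the maximal-weight simple ($\mathcal{X}_1^-$, of conformal weight $1$), exhibit a strictly lower-weight composition factor of its projective cover, and conclude via Prop. \ref{lb3} and Prop. \ref{lowest2}. The bibliographic input you flag as the remaining obstacle is exactly what the paper supplies: \cite[Thm. 5.4]{McR-deligne} gives the vacuum module $\mathcal{X}_1^+$ (weight $0$) as a composition factor of $\mathcal{P}_1^-$, the case $d=1$ is reduced to the triplet case via $SF_1^+\simeq\mathcal{W}_2$, and $\wt(\mathcal{X}_1^-)=1$ is computed explicitly from the decomposition of $SF_d^+$ over $\mathcal{W}_2^{\otimes d}$ and the fusion rules of $\Mod(\mathcal{W}_2)$.
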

\begin{proof}
	Since $SF_{d}^+$ is $C_2$-cofinite \cite{Abe-triplet}, all of the preceding results for a general $C_2$-cofinite VOA $\Vbb$ apply. We follow the terminology of \cite[Sec. 5]{McR-deligne}. Noting that $SF_{1}^+\simeq \mathcal{W}_2$ \cite{GaberdielKausch1999,Kausch2000}, it suffices to consider $d\geq 2$ in our proof. Let $X_1^\pm,X_2^\pm$ denote the irreducible $\mathcal{W}_2$-modules described in the proof of Cor. \ref{triplet}. Recall that $SF_d^+$ is an extension of $\mathcal{W}_2^{\otimes d}$. As a $\mathcal{W}_2^{\otimes d}$-module, we have the decomposition
	\begin{align}\label{symplectic3}
		SF_d^+=\bigoplus_{\mathrm{Card}(\{1\leq i\leq d:\varepsilon_i=-\})\text{ even}} X_1^{\varepsilon_1}\otimes \cdots \otimes X_1^{\varepsilon_d}
	\end{align}
	Up to isomorphisms, the irreducible $SF_{d}^+$-modules are $\mathcal{X}_i^\varepsilon$ for $i=1,2$ and $\varepsilon=\pm$. By \cite[Thm. 5.4]{McR-deligne}, the projective cover $\mathcal{P}_1^-$ of $\mathcal{X}_1^-$ has a composition factor isomorphic to the vacuum module $\mathcal{X}_1^+$. Moreover, as a $\mathcal{W}_2^{\otimes d}$-module, we have the equivalence
	\begin{align}\label{symplectic2}
		\mathcal{X}_1^-\simeq SF_d^+\boxtimes (X_1^-\otimes X_1^+\otimes \cdots \otimes X_1^+)
	\end{align}
	Using the decomposition \eqref{symplectic3} and the fusion product $X_1^{\varepsilon_1}\boxtimes X_1^{\varepsilon_2}=X_1^{\varepsilon_1\varepsilon_2}$ in $\Mod(\mathcal{W}_2)$, together with the fact that $\wt(X_1^+)=0$ and $\wt(X_1^-)=1$, one shows that $\wt(\mathcal{X}_1^-)= 1$. Hence, 
	\begin{align*}
		\wt(\mathcal{X}_1^-)=1>0=\wt(\mathcal{X}_1^+)
	\end{align*}
	By Prop. \ref{lb3}, $\mathcal{P}_1^-$ is not lowest generated. This completes the proof.
\end{proof}

\subsection{Smooth conformal block functors}
\label{smooth3}
Let $N\in \Zbb_+$, and let $\fx$ be an $N$-pointed smooth sphere (with local coordinates), i.e.,
\begin{align*}
	\fx=(\Pbb^1\big|x_1,\cdots,x_N;\eta_1,\cdots,\eta_N)
\end{align*}
where $x_1,\cdots,x_N$ are distinct marked points of $\Pbb^1$ and each $\eta_i$ is a local coordinate at $x_i$. Let $\Wbb_1,\cdots,\Wbb_N\in \Mod(\Vbb)$ and associate $\Wbb_i$ to $x_i$ for each $i$. 

The \textbf{space of smooth conformal blocks associated to $\fx$ and $\Wbb_1,\cdots,\Wbb_N$}, denoted
\begin{align*}
	\ST_\fx^*(\Wbb_1\otimes \cdots \otimes \Wbb_N)
\end{align*}
consists of linear functionals 
\begin{align*}
	\Wbb_1\otimes \cdots\otimes \Wbb_N\rightarrow \Cbb
\end{align*}
that are invariant under the actions of $\Vbb$ and $\fx$ \cite{FB04,NT-P1_conformal_blocks,DGT1}.

 Following the graphical calculus of conformal blocks \cite[Ch. 1]{GZ5}, the picture representing $\ST_\fx^*(\Wbb_1\otimes \cdots \otimes \Wbb_N)$ is 
\begin{align*}
	\ST^*\Bigg(\vcenter{\hbox{{
					   \includegraphics[height=2.5cm]{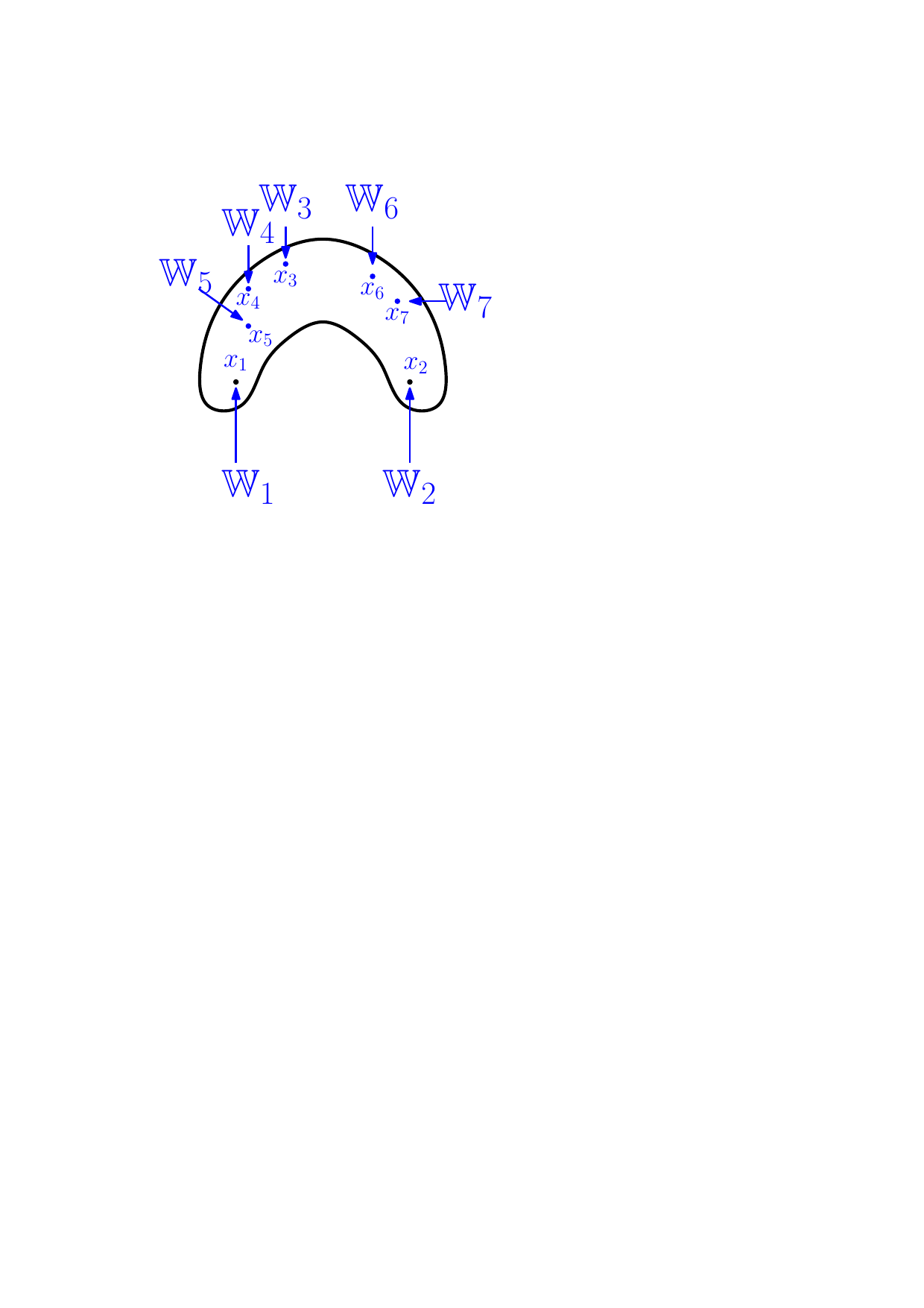}}}}~\Bigg)
	\end{align*}
illustrated here for $N=7$.

\begin{df}\label{smoothcb}
	The contravariant functor
\begin{gather*}
	\ST_{\fx}^*:\Mod(\Vbb)\times \cdots\times \Mod(\Vbb)\rightarrow \Vect\\
	(\Wbb_1,\cdots,\Wbb_N)\mapsto \ST_{\fx}^*(\Wbb_1\otimes \cdots \otimes \Wbb_N)
\end{gather*}
is called the \textbf{smooth conformal block functor associated to $\fx$}. In the language of graphical calculus, $\ST_{\fx}^*$ is also represented as the smooth conformal block functor corresponding to
\begin{align*}
	\vcenter{\hbox{{
				   \includegraphics[height=2.0cm]{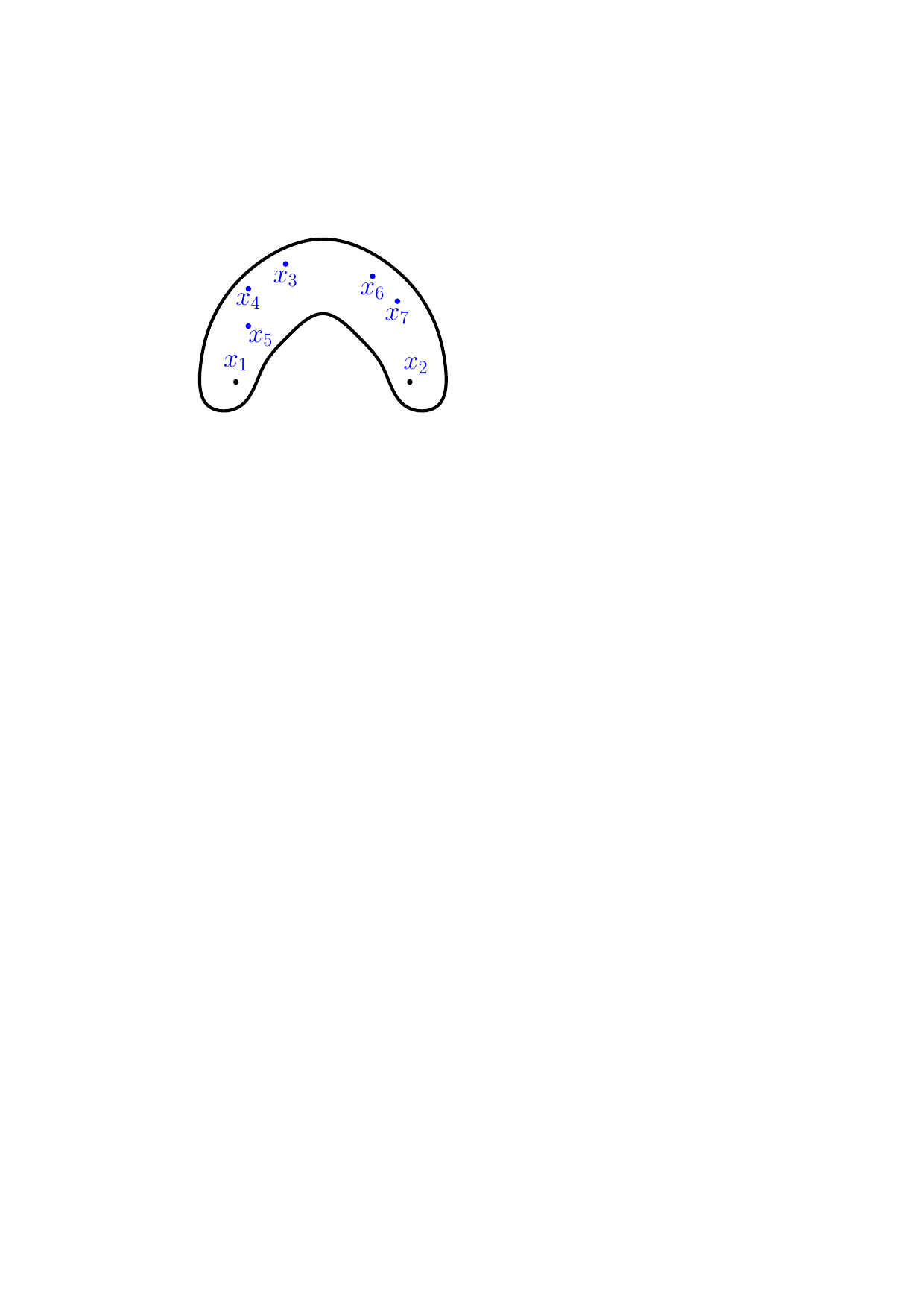}}}}
\end{align*}
illustrated here for $N=7$.
\end{df}

Recall that $\zeta$ is the standard coordinate of $\Cbb$ (see Sec. \ref{notation}). Let $\Xbb,\Ybb\in \Mod(\Vbb)$ and 
\begin{align}\label{smooth1}
	\fn=(\Pbb^1|\infty,0;1/\zeta,\zeta),
\end{align}
where $\Pbb^1$ is identified with $\Cbb\cup\{\infty\}$ and $1/\zeta,\zeta$ are local coordinates at $\infty,0$. The space of smooth conformal blocks associated to $\fn$ and $\Xbb,\Ybb$ 
\begin{align*}
	\ST_{\fn}^*(\Xbb\otimes \Ybb)\equiv \ST^*\Bigg(\vcenter{\hbox{{
					   \includegraphics[height=2.5cm]{fig1a.pdf}}}}~\Bigg)
	\end{align*}
	can be explicitly described by the space of linear functionals $\uppsi:\Xbb\otimes \Ybb\rightarrow \Cbb$ such that: for each $v\in \Vbb,x\in \Xbb, y\in \Ybb$, the relation 
	\begin{align*}
		\uppsi(Y(v,z)x\otimes y)=\uppsi(x\otimes Y'(v,z)y)
	\end{align*}
	holds in $\Cbb[[z^{\pm 1}]]$. It is well known (cf. e.g. \cite[Prop. 5.9.1]{NT-P1_conformal_blocks} or \cite[Prop. 2.3]{GZ5}) that $\ST_{\fn}^*(\Xbb\otimes \Ybb)$ can be identified with $\Hom_\Vbb(\Ybb,\Xbb')$ via the isomorphism 
\begin{gather}\label{eq8}
	\Hom_\Vbb(\Ybb,\Xbb')\xrightarrow{\simeq} \ST_{\fn}^*(\Xbb\otimes \Ybb),\quad T\mapsto T^\flat
\end{gather}
where $T^\flat(x\otimes y)=\Lan x, T(y)\Ran$.

\subsection{Nodal conformal block functors}
\label{nodal2}
Let $N\in \Zbb_{\geq 2}$, and let $\fk Y$ be an $N$-pointed nodal sphere (with local coordinates), i.e., 
\begin{align*}
	\fk Y=(\mathcal{P}\big|x_1,\cdots,x_N;\eta_1,\cdots,\eta_N)
\end{align*}
where $\mathcal{P}$ is a nodal sphere (i.e., a nodal curve of genus 0) with one node, $x_1,\cdots,x_N$ are marked points of $C$ distinct from the nodes and each $\eta_i$ is a local coordinate at $x_i$. Moreover, we assume that the two irreducible components of $\mathcal{P}$ contain $x_1$ and $x_2$, respectively. Let $\Wbb\in \Mod(\Vbb^{\otimes 2})$ and $\Wbb_3,\cdots,\Wbb_N\in \Mod(\Vbb)$. Associate $\Wbb=(\Wbb,Y_+,Y_-)$ to $x_1,x_2$ via the default ordering (cf. \cite{GZ5})
\begin{align*}
	\eps:\{+,-\}\rightarrow \{x_1,x_2\},\qquad \eps(+)= x_1,\quad\eps(-)=x_2
\end{align*}
and $\Wbb_i$ to $x_i$ for each $3\leq i\leq N$. 

The \textbf{space of nodal conformal blocks associated to $\fk Y$ and $\Wbb,\Wbb_3,\cdots,\Wbb_N$ via the default ordering $\eps$}, denoted
\begin{align*}
	\ST_{\fk Y}^*(\Wbb\otimes \Wbb_3\otimes  \cdots\otimes \Wbb_N)
\end{align*}
consists of linear functionals 
\begin{align*}
	\Wbb\otimes \Wbb_3\otimes  \cdots\otimes \Wbb_N\rightarrow \Cbb
\end{align*}
that are invariant under the action of $\Vbb$ and $\fk Y$ \cite{NT-P1_conformal_blocks,DGT1,DGT2}. 

Following the graphical calculus of conformal blocks \cite[Ch. 1]{GZ5}, the picture representing $\ST_{\fk Y}^*(\Wbb\otimes \Wbb_3\otimes  \cdots\otimes \Wbb_N)$ is 
\begin{align*}
	\ST^*\Bigg(\vcenter{\hbox{{
					   \includegraphics[height=2.5cm]{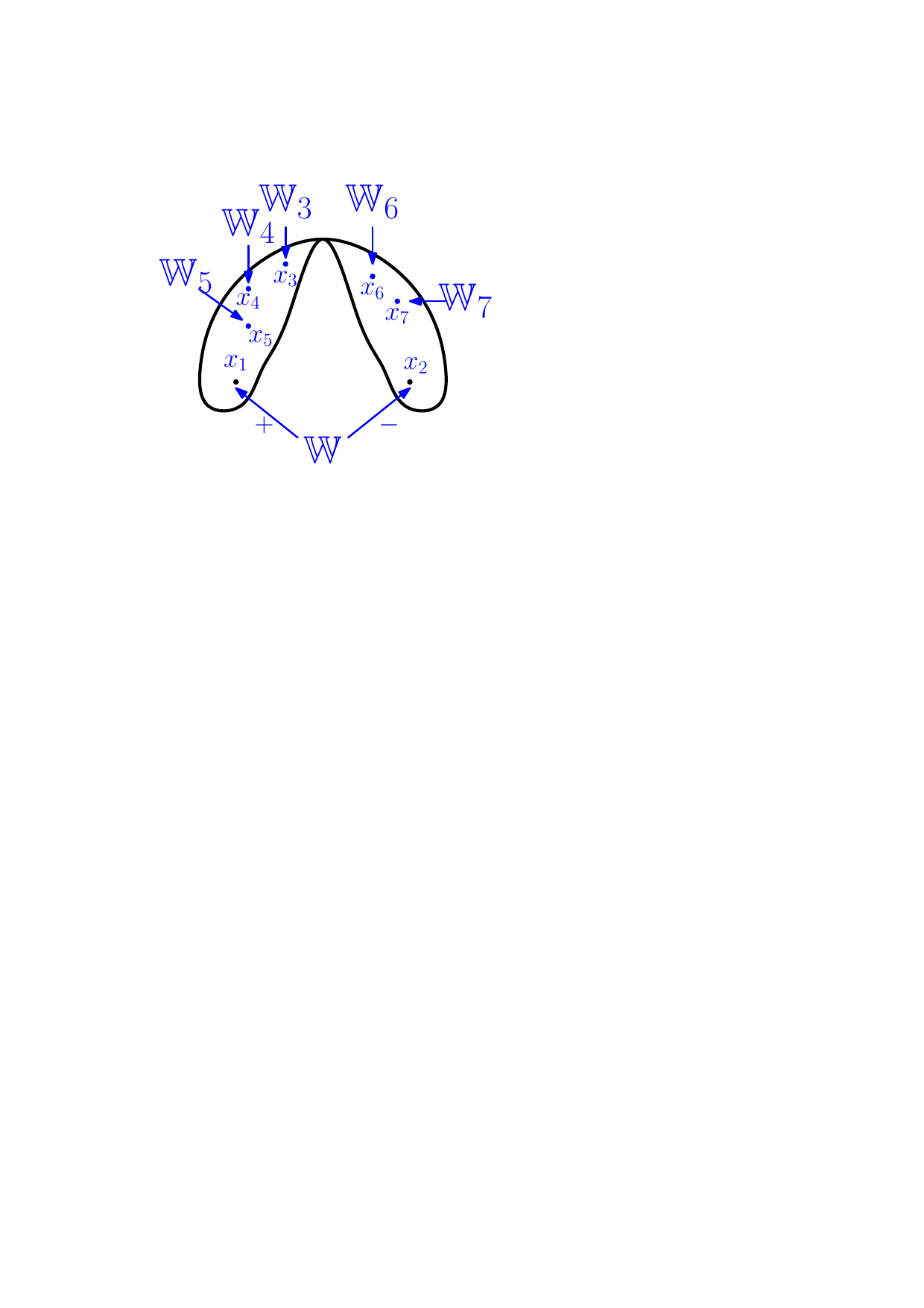}}}}~\Bigg)
	\end{align*}
illustrated here for $N=7$.

In the special case where $\Wbb=\Wbb_1\otimes \Wbb_2$ with $\Wbb_1,\Wbb_2\in \Mod(\Vbb)$, the picture representing $\ST_{\fk Y}^*(\Wbb\otimes \Wbb_3\otimes  \cdots\otimes \Wbb_N)\equiv \ST_{\fk Y}^*(\Wbb_1 \otimes  \cdots\otimes \Wbb_N)$ is given by
\begin{align*}
	\ST^*\Bigg(\vcenter{\hbox{{
					   \includegraphics[height=2.5cm]{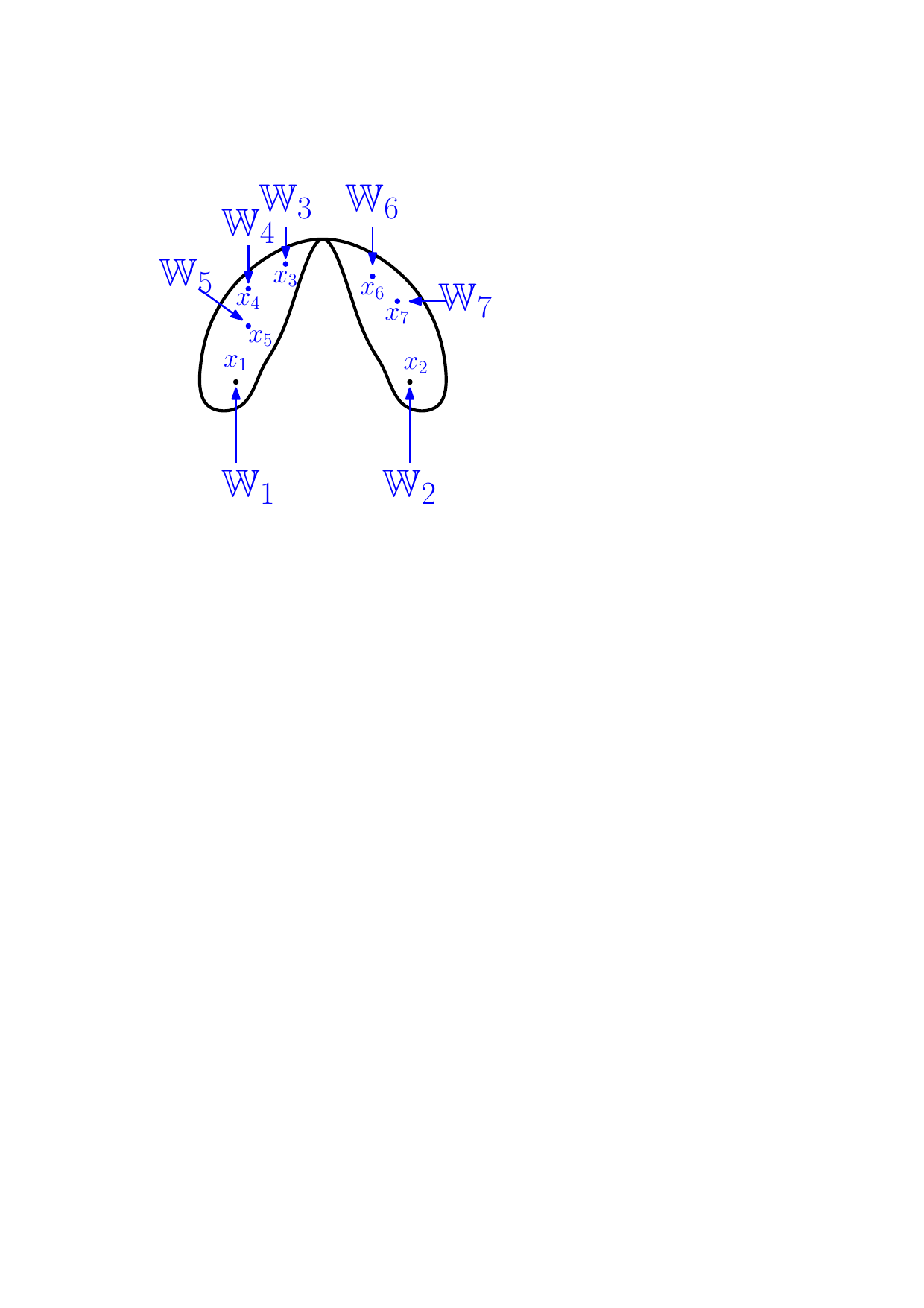}}}}~\Bigg)
	\end{align*}
illustrated here for $N=7$.

\begin{df}\label{nodalcb}
	The contravariant functor 
	\begin{gather*}
		\ST_{\fk Y}^*:\Mod(\Vbb)\times \cdots\times \Mod(\Vbb)\rightarrow \Vect\\
		(\Wbb_1,\cdots,\Wbb_N)\mapsto \ST_{\fk Y}^*(\Wbb_1\otimes \cdots \otimes \Wbb_N)
	\end{gather*}
	is called the \textbf{nodal conformal block functor associated to $\fx$}. In the language of graphical calculus, $\ST_{\fk Y}^*$ is also represented as the nodal conformal block functor corresponding to
	\begin{align*}
		\vcenter{\hbox{{
					   \includegraphics[height=2.0cm]{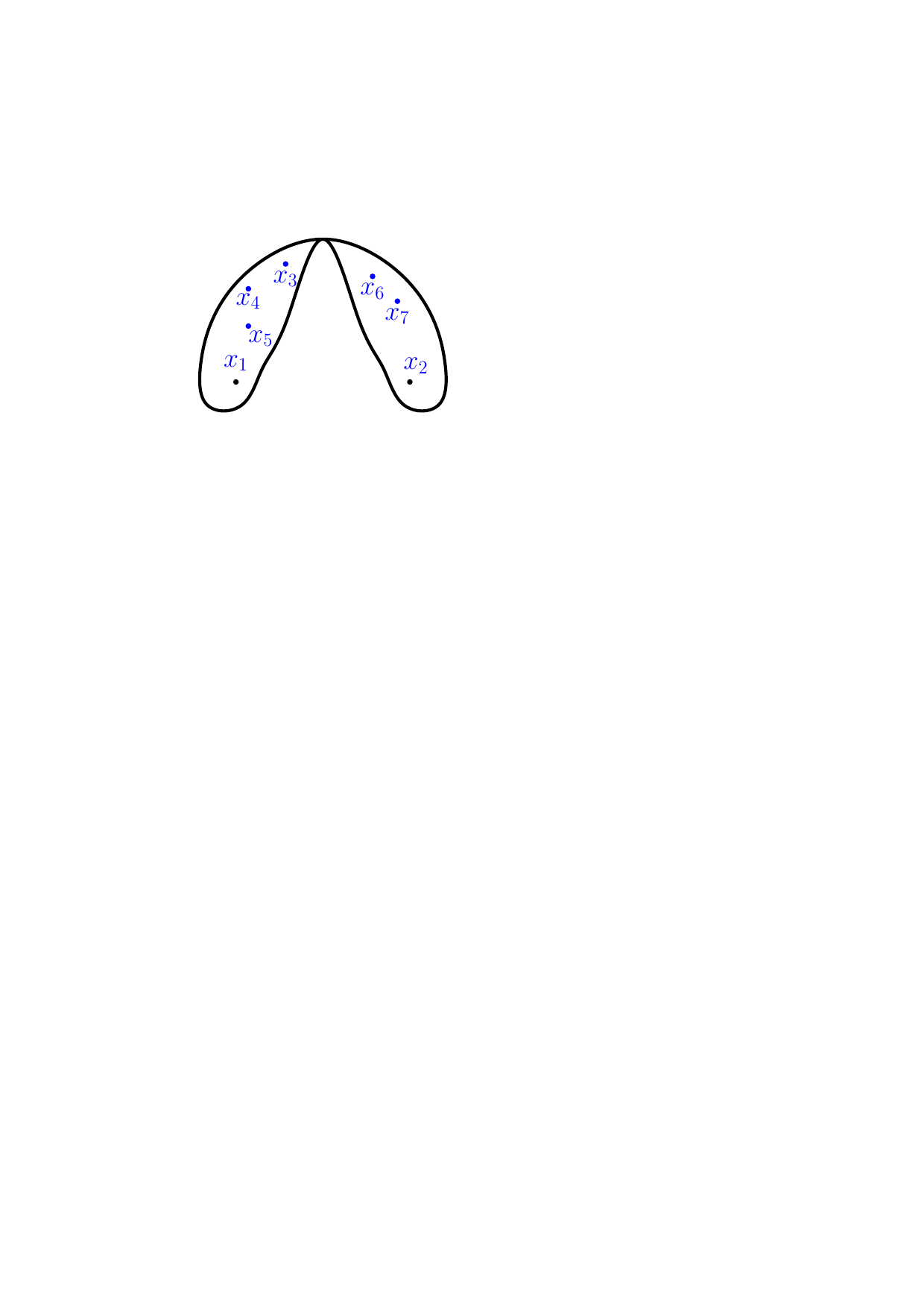}}}}
	\end{align*}
	\end{df}

Consider the 2-pointed nodal sphere (with local coordinates) 
\begin{align}\label{nodal1}
	\fk B=(\mathcal{B}\big| \infty,0;1/\zeta,\zeta).
\end{align} 
Here $\mathcal{B}$ is the nodal sphere obtained by gluing two copies of 
\begin{align*}
	\fn=\eqref{smooth1}=(\Pbb^1|\infty,0;1/\zeta,\zeta)
\end{align*}
along the point $0$ on the first copy and the point $\infty$ on the second. The nodal sphere $\mathcal{B}$ carries two marked points, $\infty$ and $0$, both distinct from the node: the point $\infty$ is inherited from the first copy of $\fn$, while $0$ is inherited from the second. The local coordinates $1/\zeta$ and $\zeta$ are likewise inherited from the two copies, respectively. Thus, each irreducible component of $\fk B$ contains exactly one marked point. 

Let $\Wbb\in \Mod(\Vbb^{\otimes 2})$. The space of nodal conformal blocks associated to $\fk B$ and $\Wbb$ via $\eps$
\begin{align*}
	\ST_{\fk B}^*(\Wbb)\equiv\ST^*\Bigg(\vcenter{\hbox{{
					   \includegraphics[height=2.5cm]{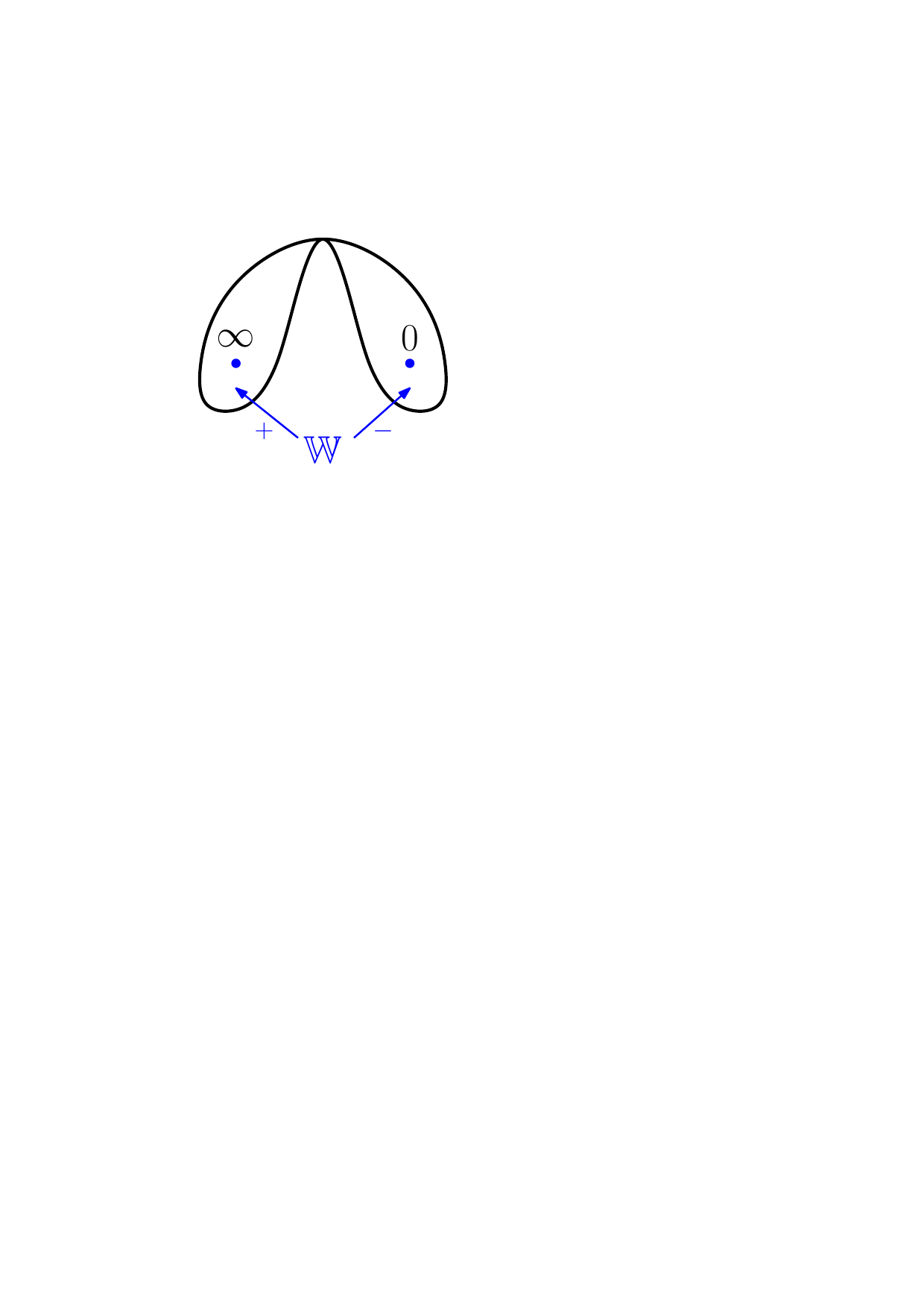}}}}~\Bigg)
	\end{align*}
	can be explicitly described by the space of linear functionals $\uppsi:\Wbb\rightarrow \Cbb$ such that: for each $n\in \Zbb,w\in \Wbb$ and homogeneous $v\in \Vbb$, we have
\begin{subequations}\label{eq17}
\begin{gather}
    \label{eq18}\uppsi(Y_+(v)_{\wt(v)-1} w)=\uppsi(Y_-'(v)_{\wt(v)-1} w)\\
	\label{eq16}\uppsi(Y_+(v)_n w)= \uppsi( Y_-(v)_n  w)=0,\quad \text{ if }\wt(v)-n-1>0.
\end{gather}
\end{subequations}

\begin{pp}\label{nodal9}
For each $\Wbb\in \Mod(\Vbb^{\otimes 2})$, $\ST_{\fk B}^*(\Wbb)$, a priori a subspace of $\Wbb^*$, is actually inside $\Wbb'$.
\end{pp}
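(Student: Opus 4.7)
The plan is to show that even the weaker condition \eqref{eq16} alone forces $\uppsi\in\Wbb'$; the zero-mode identity \eqref{eq18} plays no role in this step. I would introduce the bi-graded subspaces
\[
\Wbb_\pm \;=\; \mathrm{span}\bigl\{\, Y_\pm(v)_n w \,:\, v\in\Vbb \text{ homogeneous},\; n\in\Zbb,\; w\in\Wbb,\; \wt(v)-n-1>0\,\bigr\}
\]
of $\Wbb$. Since each operator $Y_\pm(v)_n$ with $v$ homogeneous shifts the bi-grading on $\Wbb=\bigoplus_{\lambda_\blt}\Wbb_{[\lambda_\blt]}$ by a fixed amount, both $\Wbb_+$ and $\Wbb_-$ are graded subspaces, and hence so is their sum. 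By \eqref{eq16}, any $\uppsi\in\ST^*_{\fk B}(\Wbb)$ vanishes on $\Wbb_++\Wbb_-$ and therefore descends to a functional on the graded quotient $\Wbb/(\Wbb_++\Wbb_-)$. The membership $\uppsi\in\Wbb'$ is equivalent to $\uppsi$ being supported on only finitely many bi-weight components of $\Wbb$, so it suffices to show this quotient is finite-dimensional.

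The key step is the inclusion $C_2(\Wbb)\subset \Wbb_++\Wbb_-$, where $C_2(\Wbb)=\mathrm{span}\{Y(v)_{-2}w: v\in\Vbb^{\otimes 2}, w\in\Wbb\}$ is computed relative to the $\Vbb^{\otimes 2}$-module structure. By linearity it suffices to treat $v=v_1\otimes v_2$ with $v_1,v_2\in \Vbb$ homogeneous. Because $\Vbb\otimes\idt$ and $\idt\otimes\Vbb$ are mutually commuting sub-VOAs of $\Vbb^{\otimes 2}$, the fields $Y_+(v_1,z)$ and $Y_-(v_2,z)$ commute as formal distributions, and from $v_1\otimes v_2=Y(v_1\otimes\idt)_{-1}(\idt\otimes v_2)$ one derives $Y(v_1\otimes v_2,z)=Y_+(v_1,z)Y_-(v_2,z)$. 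Extracting the coefficient of $z$ gives
\[
Y(v_1\otimes v_2)_{-2}\;=\;\sum_{m+n=-3}Y_+(v_1)_m Y_-(v_2)_n.
\]
For each pair $(m,n)$ with $m+n=-3$ at least one of $m,n$ is $\leq -2$: if $m\leq-2$ then $\wt(v_1)-m-1\geq 1$, so $Y_+(v_1)_m$ is weight-raising in the $+$ direction and $Y_+(v_1)_m Y_-(v_2)_n w\in\Wbb_+$; if $n\leq -2$ then, using $[Y_+(v_1)_m,Y_-(v_2)_n]=0$, the product equals $Y_-(v_2)_n Y_+(v_1)_m w\in\Wbb_-$. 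Summing over $(m,n)$ yields $Y(v_1\otimes v_2)_{-2}w\in\Wbb_++\Wbb_-$.

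To conclude, I would invoke the standard facts that tensor products of $C_2$-cofinite VOAs are $C_2$-cofinite and that every $\Wbb\in\Mod(\Vbb^{\otimes 2})$ satisfies $\dim \Wbb/C_2(\Wbb)<\infty$. Combined with the inclusion above, this forces $\dim \Wbb/(\Wbb_++\Wbb_-)<\infty$; since $\Wbb_++\Wbb_-$ is graded, only finitely many bi-weight pieces of the quotient are nonzero, so $\uppsi$ vanishes on $\Wbb_{[\lambda_\blt]}$ outside a finite set $S\subset\Cbb^2$ of bi-weights, proving $\uppsi\in\Wbb'$. The main technical hurdle I anticipate is justifying $Y(v_1\otimes v_2,z)=Y_+(v_1,z)Y_-(v_2,z)$ on an arbitrary $\Vbb^{\otimes 2}$-module rather than on $\Vbb^{\otimes 2}$ itself; this is a routine consequence of the $(-1)$-product formula together with the mutual commutativity of the two copies of $\Vbb$, but it deserves to be spelled out carefully.
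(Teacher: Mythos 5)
Your argument is correct, and at bottom it runs on the same fuel as the paper's proof: a $C_2$-type spanning statement showing that all sufficiently high bi-graded components of $\Wbb$ lie in the span of modes $Y_\pm(v)_n$ with $\wt(v)-n-1>0$, on which every $\uppsi\in\ST_{\fk B}^*(\Wbb)$ vanishes by \eqref{eq16}. The difference is in how that statement is obtained and how the conclusion is extracted. The paper simply cites Miyamoto's Lem.\ 2.4 (with \cite{GZ1} for the adaptation to the two-sided setting) for a bound $\nu$ such that every homogeneous $w$ with $\Re(\wt(w))>\nu$ is a finite sum of vectors $Y_+(u_+)_{-l}w_+$ and $Y_-(u_-)_{-k}w_-$ with $l,k>1$; you instead derive the inclusion $C_2(\Wbb)\subset\Wbb_++\Wbb_-$ from scratch via the factorization $Y(v_1\otimes v_2,z)=Y_+(v_1,z)Y_-(v_2,z)$ and then invoke $C_2$-cofiniteness of $\Vbb^{\otimes 2}$. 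That factorization is indeed legitimate: the commutator formula gives $[Y_+(v_1)_m,Y_-(v_2)_k]=0$ since $(v_1\otimes\idt)_j(\idt\otimes v_2)=0$ for $j\geq 0$, so the normally ordered product supplied by the $(-1)$-product formula coincides with the ordinary product, and your case analysis on $m+n=-3$ (using $\wt(v_i)\geq 0$ from the $\Nbb$-grading) is sound. Your endgame also differs: the paper first deduces $\dim\ST_{\fk B}^*(\Wbb)<\infty$ and then runs an $L(0)$-invariance and generalized-eigenspace argument to place each functional in a finite sum of $(\Wbb_{[\lambda,\lambda]})^*$, whereas you bypass this by noting that $\Wbb_++\Wbb_-$ is a graded subspace of finite codimension, so all but finitely many $\Wbb_{[\lambda_\blt]}$ are contained in it. Your route is self-contained and arguably cleaner; the price is that you must import (or prove) that $\Vbb^{\otimes 2}$ is $C_2$-cofinite and that objects of $\Mod(\Vbb^{\otimes 2})$, being of finite length and hence finitely generated, satisfy $\dim\Wbb/C_2(\Wbb)<\infty$ --- both standard, but each deserving a citation (e.g.\ Abe--Buhl--Dong for the tensor product, Buhl/Miyamoto for the module statement).
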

It follows that $\ST_{\fk B}^*(\Wbb')$ is a subspace of $\Wbb$ for each $\Wbb\in \Mod(\Vbb^{\otimes 2})$.
\begin{proof}
	It follows from \cite[Lem. 2.4]{Miy-modular-invariance} that there exists an integer $\nu\in \Nbb$ such that any homogeneous vector $w\in \Wbb$ with $\Re(\wt(w))>\nu$ can be expressed as a finite sum of vectors of the form $Y_+(u_+)_{-l}w_+$ and $Y_-(u_-)_{-k}w_-$, where $l>1,k>1$, $w_+,w_-\in \Wbb$ and $u_+, u_-\in \Vbb$ homogeneous (see also \cite[Lem. 3.24]{GZ1} for a detailed explanation). Let $\uppsi\in \ST_{\fk B}^*(\Wbb)$. By \eqref{eq16}, $\uppsi$ vanishes on all $w\in \Wbb$ with $\Re(\wt(w))>\nu$. Hence, $\uppsi$ can be regarded as a linear functional
	\begin{align*}
		\bigoplus_{\Re(\lambda+\mu)\leq \nu}\Wbb_{[\lambda,\mu]}\rightarrow \Cbb
	\end{align*}
	Thus, $\ST_{\fk B}^*(\Wbb)$ embeds into $(\oplus_{\Re(\lambda+\mu)\leq \nu}\Wbb_{[\lambda,\mu]})^*$, which is finite dimensional. Consequently, $\ST_{\fk B}^*(\Wbb)$ itself is finite dimensional. 
	
	Define the action of $L(0)$ on $\ST_{\fk B}^*(\Wbb)$ by 
	\begin{align*}
		\Lan L(0)\uppsi,w\Ran=\Lan \uppsi,L_+(0)w\Ran=\Lan \uppsi,L_-(0)w\Ran,\quad \text{for all }\uppsi\in \ST_{\fk B}^*(\Wbb),w\in \Wbb
	\end{align*}
	where the last equality is due to \eqref{eq18}. We claim that $\ST_{\fk B}^*(\Wbb)$ is invariant under $L(0)$. To see this, for each $\uppsi\in \ST_{\fk B}^*(\Wbb)$, $w\in \Wbb$ and homogeneous $v\in \Vbb$, noting that $L_+(0)$ commutes with $Y_+(v)_{\wt(v)-1}$, we have
	\begin{align*}
		\Lan L(0)\uppsi,Y_+(v)_{\wt(v)-1} w\Ran=\Lan \uppsi,L_+(0)Y_+(v)_{\wt(v)-1} w\Ran=\Lan \uppsi,Y_+(v)_{\wt(v)-1}L_+(0) w\Ran\\
		=\Lan \uppsi,Y_-'(v)_{\wt(v)-1} L_+(0)w\Ran=\Lan \uppsi,L_+(0)Y_-'(v)_{\wt(v)-1} w\Ran=\Lan L(0)\uppsi,Y_-'(v)_{\wt(v)-1} w\Ran
	\end{align*}
	This proves that $L(0)\uppsi$ satisfies \eqref{eq18}. Clearly, $L(0)\uppsi$ satisfies \eqref{eq16}. Thus $\ST_{\fk B}^*(\Wbb)$ is $L(0)$-invariant.
	
	The generalized eigenspace decomposition of $\ST_{\fk B}^*(\Wbb)$ with respect to $L(0)$ is 
	\begin{align*}
		\ST_{\fk B}^*(\Wbb)=\bigoplus_{i=1}^N \ST_{\fk B}^*(\Wbb)_{[\lambda_i]},\quad \text{where }\lambda_i\in \Cbb.
	\end{align*}
	For each $1\leq i\leq N$, $\ST_{\fk B}^*(\Wbb)_{[\lambda_i]}$ is contained in $\Wbb_{[\lambda_i,\lambda_i]}^*$. Hence 
	\begin{align*}
		\ST_{\fk B}^*(\Wbb)\subset \bigoplus_{i=1}^N \Wbb_{[\lambda_i,\lambda_i]}^*\subset \Wbb'.
	\end{align*}
	This completes the proof.
\end{proof}

By \cite[Cor. 1.10]{DSPS19-balanced}, every left exact linear functor from a finite $\Cbb$-linear category to $\Vect$ is representable. Therefore, there exists a $\Wbb$-natural linear isomorphism
\begin{align}\label{nodal8}
	\varphi_\Wbb:\Hom_{\Vbb^{\otimes 2}}(\Abb,\Wbb)\xrightarrow{\simeq} \ST_{\fk B}^*(\Wbb')
\end{align}
for some $\Abb=(\Abb,Y_+,Y_-)\in \Mod(\Vbb^{\otimes 2})$. It is clear that the $\Abb$ realizing such an isomorphism as \eqref{nodal8} are unique up to isomorphisms. We fix such an isomorphism $\varphi_\Wbb$. We define
\begin{align*}
	\upomega=\varphi_{\Abb}(\id_{\Abb})\in \ST_{\fk B}^*(\Abb')\subset \Abb
\end{align*}
where the last inclusion is due to Prop. \ref{nodal9}. The object $\Abb$ is called the \textbf{(genus 0) nodal fusion product}, and the element $\upomega$ is called the \textbf{canonical conformal block}. The isomorphism $\varphi_\Wbb=\eqref{nodal8}$ is therefore implemented by
\begin{equation}\label{eq14}
	\begin{gathered}
		\varphi_\Wbb:\Hom_{\Vbb^{\otimes 2}}(\Abb,\Wbb)\xrightarrow{\simeq} \ST_{\fk B}^*(\Wbb')\\
        f\mapsto f(\upomega)
	\end{gathered}
\end{equation}
where $f(\upomega)$, a priori an element of $\Wbb$, actually lies in $\ST_{\fk B}^*(\Wbb')$. This is because $f(\upomega)$, viewed as a linear functional $\Wbb'\rightarrow \Cbb$, also satisfies a similar property as $\uppsi$ does in \eqref{eq17}.
\begin{pp}\label{nodal7}
	The canonical conformal block $\upomega\in \Abb$ generates $\Abb$ as a $\Vbb^{\otimes 2}$-module. 
\end{pp}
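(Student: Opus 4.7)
The strategy is a standard Yoneda-type argument that exploits the universal role of $\upomega$ in the representability isomorphism \eqref{nodal8}. Let $\Abb_0$ denote the $\Vbb^{\otimes 2}$-submodule of $\Abb$ generated by the canonical conformal block $\upomega$. I would argue by contradiction, assuming $\Abb_0 \subsetneq \Abb$ and using the explicit formula \eqref{eq14} to force the quotient map onto $\Abb/\Abb_0$ to vanish.

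More concretely, under the assumption $\Abb_0 \neq \Abb$, the quotient map
\begin{align*}
\pi : \Abb \twoheadrightarrow \Abb/\Abb_0
\end{align*}
is a nonzero morphism in $\Mod(\Vbb^{\otimes 2})$ (noting that $\Mod(\Vbb^{\otimes 2})$ is closed under taking quotients by submodules, so $\Abb/\Abb_0$ is again an object of $\Mod(\Vbb^{\otimes 2})$). Applying the explicit formula \eqref{eq14} for $\varphi_\Wbb$ with $\Wbb = \Abb/\Abb_0$ yields
\begin{align*}
\varphi_{\Abb/\Abb_0}(\pi) = \pi(\upomega).
\end{align*}
Since $\upomega \in \Abb_0$ by definition of $\Abb_0$, one has $\pi(\upomega) = 0$, and hence $\varphi_{\Abb/\Abb_0}(\pi) = 0$. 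But $\varphi_{\Abb/\Abb_0}$ is a linear isomorphism, so $\pi = 0$, i.e., $\Abb_0 = \Abb$, contradicting the assumption.

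There is no real obstacle in this argument: it is the textbook observation that the element of $F(\Abb)$ corresponding to $\id_\Abb$ under a natural isomorphism $\Hom_{\Vbb^{\otimes 2}}(\Abb,-) \simeq F(-)$ is a universal element which generates the representing object (here with $F(\Wbb) = \ST_{\fk B}^*(\Wbb')$). The one point to double-check is that the explicit formula $f \mapsto f(\upomega)$ of \eqref{eq14} really applies to the morphism $\pi$ in question, but this is already stated in the excerpt as the global description of $\varphi_\Wbb$ for arbitrary $\Wbb \in \Mod(\Vbb^{\otimes 2})$, and is itself a direct consequence of naturality of $\varphi$ together with the definition $\upomega = \varphi_\Abb(\id_\Abb)$.
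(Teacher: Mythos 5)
Your argument is correct and is essentially identical to the paper's own proof: both pass to the quotient $\Abb/\Abb_0$ by the submodule generated by $\upomega$, apply the explicit formula \eqref{eq14} to the nonzero projection $\pi$, and derive a contradiction from $\varphi_{\Abb/\Abb_0}(\pi)=\pi(\upomega)=0$ and the injectivity of $\varphi_{\Abb/\Abb_0}$.
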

\begin{proof}
	Let $\Ubb\in \Mod(\Vbb^{\otimes 2})$ be the submodule of $\Abb$ generated by $\upomega$. We claim that $\Ubb = \Abb$. Suppose, for contradiction, that $\Ubb \neq \Abb$. Take $\Wbb=\Abb/\Ubb$ in \eqref{eq14}. Then \eqref{eq14} specializes to 
	\begin{gather*}
		\varphi_{\Abb/\Ubb}:\Hom_{\Vbb^{\otimes 2}}(\Abb,\Abb/\Ubb)\xrightarrow{\simeq} \ST_{\fk B}^*\big((\Abb/\Ubb)'\big)\\
        f\mapsto f(\upomega)
	\end{gather*}
	Let $\pi:\Abb\rightarrow \Abb/\Ubb$ denote the canonical projection. Since $\Abb/\Ubb$ is nonzero, $\pi$ is a nonzero morphism. However, 
	\begin{align*}
		\varphi_{\Abb/\Ubb}(\pi)=\pi(\upomega)=0
	\end{align*}
	contradicting the injectivity of $\varphi_{\Abb/\Ubb}$. Therefore, $\Ubb=\Abb$, and hence $\upomega$ generates $\Abb$.
\end{proof}

\begin{co}\label{lowest3}
$(\Abb,Y_+)$ is lowest generated as a weak $\Vbb$-module. 
\end{co}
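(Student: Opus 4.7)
The strategy is to show $\upomega \in \Omega(\Abb)$, the lowest weight subspace of the weak $\Vbb$-module $(\Abb,Y_+)$, and then combine this with the $Y_-$-stability of $\Omega(\Abb)$ together with Prop.~\ref{nodal7}. Since by Prop.~\ref{nodal7} the $\Vbb^{\otimes 2}$-submodule of $\Abb$ generated by $\upomega$ is all of $\Abb$, this structural observation will suffice.

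First I would extract algebraic consequences of $\upomega \in \ST_{\fk B}^*(\Abb')$. Applying \eqref{eq16} with $\Wbb = \Abb'$ yields $\bk{\upomega, Y_+(v)_n w'} = 0$ for every $w' \in \Abb'$, every homogeneous $v \in \Vbb$, and every $n$ with $\wt(v) - n - 1 > 0$, where here $Y_+$ denotes the vertex operator on $\Abb'$. Combining the pairing rule from Sec.~\ref{notation} (applied with $\Wbb = \Abb$ and $w = \upomega$) with the explicit formula $Y_+'(v,z) = Y_+(\MU(\upgamma_z)v, z^{-1})$ and expanding $\MU(\upgamma_z)v$ in powers of $z$, this rewrites as
\[
\sum_{k \geq 0} \frac{(-1)^{\wt(v)}}{k!} \, Y_+(L(1)^k v)_{j-k} \, \upomega = 0 \qquad \text{for all homogeneous } v \text{ and all } j \geq \wt(v),
\]
after the re-indexing $j := 2\wt(v) - n - 2$ (so $n \leq \wt(v)-2$ corresponds exactly to $j \geq \wt(v)$). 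The one genuinely computational step is this re-indexing; after that the formula reads as a clean inductive relation.

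Next I would induct on $\wt(v)$ to conclude that $Y_+(v)_j \upomega = 0$ for all homogeneous $v$ and all $j \geq \wt(v)$. The base case $\wt(v) = 0$ (so $v \in \Cbb\cdot\idt$) is immediate from $Y_+(\idt)_j = \delta_{j,-1}\,\id$. For the inductive step, the coefficient $(-1)^{\wt(v)}$ of the $k=0$ term in the displayed identity is nonzero, so I can solve for $Y_+(v)_j \upomega$ and write it as a linear combination of terms $Y_+(L(1)^k v)_{j-k} \upomega$ for $k \geq 1$; each such term vanishes by the inductive hypothesis, because $\wt(L(1)^k v) = \wt(v) - k < \wt(v)$ and the mode index $j - k$ satisfies $j - k \geq \wt(v) - k = \wt(L(1)^k v)$. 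Hence $\upomega \in \Omega(\Abb)$.

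Finally, since $Y_+(v)_n$ and $Y_-(u)_m$ commute on every $\Vbb^{\otimes 2}$-module, $\Omega(\Abb)$ is automatically $Y_-$-stable. Therefore the $\Vbb$-submodule $\Ubb \subset \Abb$ generated via the $Y_+$-action from $\Omega(\Abb)$ is closed under $Y_-$, hence a $\Vbb^{\otimes 2}$-submodule. It contains $\upomega$, so by Prop.~\ref{nodal7} it equals $\Abb$, proving that $(\Abb, Y_+)$ is lowest generated. The main delicate point I anticipate is the mode-by-mode computation producing the displayed identity, since it requires carefully tracking the powers of $z$ coming from $(-z^{-2})^{L(0)}$ and the substitution $z \mapsto z^{-1}$; everything else is a formal consequence once the identity is in hand.
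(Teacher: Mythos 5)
Your proof is correct and follows essentially the same route as the paper's: the paper's one-line assertion that the $(\Abb,Y_-)$-submodule generated by $\upomega$ is contained in $\Omega(\Abb)$ is precisely the combination of your mode computation showing $\upomega\in\Omega(\Abb)$ with the commutativity of the $Y_+$- and $Y_-$-modes, and both arguments then conclude via Prop.~\ref{nodal7}. (A small remark: the induction is not strictly needed, since for $n\geq \wt(v)$ each individual term $Y_{+,\Abb'}(L(1)^k v)_{2\wt(v)-n-k-2}$ already falls under the vanishing condition \eqref{eq16} applied to $\upomega\in\ST_{\fk B}^*(\Abb')$, and likewise the base case follows from your displayed identity itself because $L(1)^k v=0$ for $k\geq 1$ when $\wt(v)=0$, without invoking $v\in\Cbb\idt$.)
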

\begin{proof}
	Let $\Omega(\Abb)$ denote the lowest weight subspace of $(\Abb,Y_+)$. The submodule $\Abb_-$ of $(\Abb,Y_-)$ generated by $\upomega$ is contained in $\Omega(\Abb)$. By Prop. \ref{nodal7}, $\Abb_-$ generates $(\Abb,Y_+)$. Thus, $\Omega(\Abb)$ also generates $(\Abb,Y_+)$.
\end{proof}

\section{Smooth and nodal conformal blocks}
\subsection{A dimension criterion}
\begin{pp}\label{dimension}
	Let $\varphi_\Xbb:P_\Xbb\twoheadrightarrow \Xbb$ be the projective cover of an irreducible $\Vbb$-module $\Xbb$ in $\Mod(\Vbb)$. Let $\Ybb\in \Mod(\Vbb)$. If $\dim \Hom_\Vbb(P_\Xbb,\Mbb)=\dim \Hom_\Vbb(\Ybb,\Mbb)$ for all $\Mbb\in \Mod(\Vbb)$, then $P_\Xbb\simeq \Ybb$ in $\Mod(\Vbb)$.
\end{pp}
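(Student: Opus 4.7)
The plan is to reduce the claim to the statement that $\Ybb$ is projective, and then identify it by its homomorphisms into the simple modules. By Rem.~\ref{composition}, the hypothesis may be rewritten as
\begin{align*}
\dim \Hom_\Vbb(\Ybb,\Mbb)=\dim \Hom_\Vbb(P_\Xbb,\Mbb)=[\Mbb:\Xbb]\qquad\text{for all }\Mbb\in \Mod(\Vbb).
\end{align*}
Taking $\Mbb=\Xbb$ yields $\dim\Hom_\Vbb(\Ybb,\Xbb)=1$, so in particular $\Ybb\ne 0$.

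The key step is to show that $\Ybb$ is projective. Apply $\Hom_\Vbb(\Ybb,-)$ and $\Hom_\Vbb(P_\Xbb,-)$ to an arbitrary short exact sequence $0\to A\to B\to C\to 0$ in $\Mod(\Vbb)$. Since $P_\Xbb$ is projective, the sequence $0\to \Hom_\Vbb(P_\Xbb,A)\to \Hom_\Vbb(P_\Xbb,B)\to \Hom_\Vbb(P_\Xbb,C)\to 0$ is exact, so the alternating sum of dimensions satisfies $\dim\Hom_\Vbb(P_\Xbb,A)-\dim\Hom_\Vbb(P_\Xbb,B)+\dim\Hom_\Vbb(P_\Xbb,C)=0$. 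By the hypothesis, the same identity holds with $P_\Xbb$ replaced by $\Ybb$. Combined with the left exactness of $\Hom_\Vbb(\Ybb,-)$, this forces the rightmost map $\Hom_\Vbb(\Ybb,B)\to \Hom_\Vbb(\Ybb,C)$ to be surjective, i.e., $\Hom_\Vbb(\Ybb,-)$ is exact. Therefore $\Ybb$ is projective.

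With projectivity of $\Ybb$ in hand, the conclusion follows from Krull-Schmidt. Since $\Mod(\Vbb)$ is a finite abelian category and projective covers of simples exist (Thm.~\ref{projective}), every indecomposable projective is isomorphic to some $P_\Zbb$ with $\Zbb\in\Irr$. Hence
\begin{align*}
\Ybb\simeq \bigoplus_{\Zbb\in\Irr}P_\Zbb^{\oplus m_\Zbb}
\end{align*}
for uniquely determined multiplicities $m_\Zbb\in\Nbb$. For any simple $\Wbb\in\Irr$, Rem.~\ref{composition} gives $\dim\Hom_\Vbb(P_\Zbb,\Wbb)=[\Wbb:\Zbb]$, which equals $1$ if $\Zbb\simeq\Wbb$ and $0$ otherwise. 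Therefore $\dim\Hom_\Vbb(\Ybb,\Wbb)=m_\Wbb$, and the hypothesis with $\Mbb=\Wbb$ gives $m_\Wbb=[\Wbb:\Xbb]$, which vanishes unless $\Wbb\simeq\Xbb$ (in which case it equals $1$). Consequently $\Ybb\simeq P_\Xbb$.

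The main obstacle is the middle step: extracting the projectivity of $\Ybb$ from purely numerical data. The hypothesis provides only pointwise equalities of dimensions rather than a natural isomorphism between $\Hom_\Vbb(\Ybb,-)$ and $\Hom_\Vbb(P_\Xbb,-)$, so one cannot invoke Yoneda directly; instead one must leverage the exactness of $\Hom_\Vbb(P_\Xbb,-)$ together with an alternating-sum argument on short exact sequences to promote the pointwise equality of dimensions to exactness of the functor $\Hom_\Vbb(\Ybb,-)$.
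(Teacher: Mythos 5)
Your proof is correct, but it takes a genuinely different route from the paper's. The paper argues pointwise: it uses $\dim\Hom_\Vbb(\Ybb,\Xbb)=1$ to produce a surjection $\alpha:\Ybb\to\Xbb$, lifts $\varphi_\Xbb$ through $\alpha$ to get $\beta:P_\Xbb\to\Ybb$, proves $\beta$ surjective by analyzing simple quotients of $\Ybb/\beta(P_\Xbb)$ (splitting into the cases where the top quotient is or is not $\Xbb$, the latter producing two linearly independent maps $\Ybb\to\Xbb$), and proves $\beta$ injective by passing to contragredients and comparing composition multiplicities $[\Ybb':\Wbb]$ versus $[P_\Xbb':\Wbb]$ via $\dim\Hom_\Vbb(P_\Wbb,-)$. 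Your argument instead extracts structural information first: the Euler-characteristic computation on short exact sequences (exactness of $\Hom_\Vbb(P_\Xbb,-)$ plus left exactness of $\Hom_\Vbb(\Ybb,-)$ plus equality of dimensions) correctly forces $\Hom_\Vbb(\Ybb,-)$ to be exact, hence $\Ybb$ projective; then Krull--Schmidt and the identification of indecomposable projectives with projective covers of simples pin down $\Ybb\simeq P_\Xbb$ from the multiplicities $\dim\Hom_\Vbb(\Ybb,\Wbb)=[\Wbb:\Xbb]$ for simple $\Wbb$. Your route is more conceptual and makes transparent why the hypothesis is so strong (it forces projectivity), at the cost of invoking more of the structure theory of finite abelian categories (Krull--Schmidt, classification of indecomposable projectives); the paper's route is more elementary, using only the defining property of the projective cover and composition series. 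Both arguments, yours included, tacitly rely on $\Mod(\Vbb)$ being a finite-length category with finite-dimensional Hom spaces, which is part of the paper's standing setup; you should state this explicitly where you apply Krull--Schmidt. One cosmetic point: in the paper $\Zbb$ denotes the integers, so you should use a different letter for the simple modules indexing your direct sum decomposition.
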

\begin{proof}
	 Since 
	 \begin{align*}
		\dim \Hom_\Vbb(\Ybb,\Xbb)=\dim \Hom_\Vbb(P_\Xbb,\Xbb)=1
	 \end{align*}
	 there exists a nonzero morphism $\alpha:\Ybb\rightarrow \Xbb$. As $\Xbb$ is irreducible, $\alpha$ must be an epimorphism. By the projectivity of $P_\Xbb$, there exists $\beta:P_\Xbb\rightarrow \Ybb$ such that $\varphi_\Xbb=\alpha\circ \beta$. 

	 We first show that $\beta$ is surjective. Suppose, to the contrary, that $\beta$ is not surjective. Then the nonzero quotient $\Vbb$-module $\Ybb/\beta(P_\Xbb)$ admits an epimorphism onto some irreducible $\Vbb$-module $\Ubb$. 
	 \begin{itemize}
		\item If $\Ubb\not\simeq \Xbb$, then $\dim \Hom_\Vbb(P_\Xbb,\Ubb)=0$. However, since $\Ubb$ is a quotient of $\Ybb/\beta(P_\Xbb)$, we have $\dim \Hom_\Vbb(\Ybb,\Ubb)>0$, contradicting our assumption.
		\item If $\Ubb\simeq \Xbb$, then the composition
		\begin{gather}\label{dimension1}
			\Ybb \twoheadrightarrow \Ybb/\beta(P_\Xbb) \twoheadrightarrow \Ubb\simeq \Xbb
		\end{gather}
		yields a nonzero morphism $\gamma:\Ybb\rightarrow \Xbb$. We claim that $\alpha,\gamma$ are linearly independent in $\Hom_\Vbb(\Ybb,\Xbb)$. To see this, suppose instead that they are linearly dependent. Then, up to a nonzero scalar multiplication, $\varphi_\Xbb=\alpha\circ\beta$ coincides with
		\begin{gather*}
			P_\Xbb\xrightarrow{\beta} \Ybb\twoheadrightarrow \Ybb/\beta(P_\Xbb) \twoheadrightarrow \Ubb\simeq \Xbb
		\end{gather*}
		which is zero, a contradiction. Hence $\alpha,\gamma$ are linearly independent, and so 
		\begin{align*}
		\dim \Hom_\Vbb(\Ybb,\Xbb)\geq 2>1=\dim \Hom_\Vbb(P_\Xbb,\Xbb)
	    \end{align*}
		which is again a contradiction.
	 \end{itemize}
	 Therefore, $\beta$ must be surjective.

	 We now show that $\beta$ is injective. Suppose otherwise. Then the transpose of 
	 \begin{align*}
		P_\Xbb\xrightarrow{\beta} \Ybb \xrightarrow{\alpha} \Xbb
	 \end{align*}
	 is the sequence
	 \begin{align}\label{eq7}
		\Xbb'\xrightarrow{\alpha^t} \Ybb'\xrightarrow{\beta^t}P_\Xbb'
	 \end{align}
	 where $\alpha^t$ is injective and $\beta^t$ is injective but not surjective. Thus we may regard \eqref{eq7} as 
	 \begin{align*}
		\Xbb'\subset \Ybb'\subsetneq P_\Xbb'
	 \end{align*}
	 It follows that the composition series of $P_\Xbb'$ is strictly longer than that of $\Ybb'$. Hence there exists an irreducible $\Vbb$-module $\Wbb$ such that $[\Ybb':\Wbb]<[P_\Xbb':\Wbb]$, equivalently, 
	 \begin{align*}
	 \dim \Hom_\Vbb(P_\Wbb,\Ybb')<\dim \Hom_\Vbb(P_\Wbb,P_\Xbb')
	\end{align*}
	 (See Rem. \ref{composition}). This implies 
	 \begin{align*}
	 \dim \Hom_\Vbb(\Ybb,P_\Wbb')\ne \dim \Hom_\Vbb(P_\Xbb,P_\Wbb')
	\end{align*}
	contradicting our assumption. Therefore, $\beta$ must be injective, and we conclude that $\beta$ is an isomorphism, i.e., $P_\Xbb\simeq \Ybb$.
\end{proof}

\subsection{Non-equivalence of nodal and smooth conformal block functors}
Recall the 2-pointed smooth sphere $\fn=\eqref{smooth1}=(\Pbb^1|\infty,0;1/\zeta,\zeta)$ and the 2-pointed nodal sphere $\fk B=\eqref{nodal1}=(\mathcal{B}\big| \infty,0;1/\zeta,\zeta)$, together with their conformal block functor $\ST_\fn^*,\ST_{\fk B}^*$ described in Sec. \ref{smooth3} and \ref{nodal2}.
\begin{thm}\label{main1}
	Assume that there exists a module in $\Mod(\Vbb)$ that is not lowest generated. Then there exist $\Xbb,\Ybb\in \Mod(\Vbb)$ such that
	\begin{align}\label{main2}
		\dim \ST_{\fn}^*(\Xbb\otimes \Ybb)\ne \dim \ST_{\fk B}^*(\Xbb\otimes \Ybb).
	\end{align}
    The picture for \eqref{main2} is
    \begin{align*}
		\dim \ST^*\Bigg(\vcenter{\hbox{{
						   \includegraphics[height=2.5cm]{fig1a.pdf}}}}~\Bigg)\ne 
							\dim \ST^*\Bigg(\vcenter{\hbox{{
											   \includegraphics[height=2.5cm]{fig2.pdf}}}}~\Bigg)
		\end{align*}
	
\end{thm}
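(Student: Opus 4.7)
The plan is proof by contradiction. Suppose $\dim \ST_\fn^*(\Xbb \otimes \Ybb) = \dim \ST_{\fk B}^*(\Xbb \otimes \Ybb)$ for every $\Xbb, \Ybb \in \Mod(\Vbb)$. By Prop. \ref{lowest2}, I fix an irreducible $\Xbb \in \Mod(\Vbb)$ whose projective cover $P := P_\Xbb$ is not lowest generated. The strategy is to build a lowest generated $\Ubb \in \Mod(\Vbb)$ which Prop. \ref{dimension} forces to be isomorphic to $P$, yielding the desired contradiction.

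To rewrite both sides as Hom spaces, on the smooth side I use \eqref{eq8} together with the double-contragredient identity $\Hom_\Vbb(\Mbb', P') \simeq \Hom_\Vbb(P, \Mbb)$ and Rem. \ref{composition} to obtain
\begin{align*}
\dim \ST_\fn^*(P \otimes \Mbb') = \dim \Hom_\Vbb(P, \Mbb) = [\Mbb:\Xbb], \quad \forall\, \Mbb \in \Mod(\Vbb).
\end{align*}
On the nodal side, \eqref{eq14} applied to $\Wbb = P' \otimes \Mbb \in \Mod(\Vbb^{\otimes 2})$ gives
\begin{align*}
\dim \ST_{\fk B}^*(P \otimes \Mbb') = \dim \Hom_{\Vbb^{\otimes 2}}(\Abb, P' \otimes \Mbb).
\end{align*}
The functor $\Mbb \mapsto \Hom_{\Vbb^{\otimes 2}}(\Abb, P' \otimes \Mbb)$ is covariant and left exact on the finite $\Cbb$-linear abelian category $\Mod(\Vbb)$, so by \cite[Cor. 1.10]{DSPS19-balanced} it has the form $\Hom_\Vbb(\Ubb, -)$ for some $\Ubb \in \Mod(\Vbb)$, represented by a universal morphism $\rho: \Abb \to P' \otimes \Ubb$ (corresponding to $\id_\Ubb$) so that every $g: \Ubb \to \Mbb$ matches $(\id_{P'} \otimes g) \circ \rho$.

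The key claim is that $\Ubb$ is lowest generated. Reading \eqref{eq17} for $\uppsi = \upomega$ on $\Wbb = \Abb'$ through the adjointness $\bk{Y_i(v,z) w, w'} = \bk{w, Y_i'(v,z) w'}$, together with the mode reversal $n \mapsto 2\wt(v) - n - 2$ inherent in $Y_i'$, shows that $\upomega \in \Omega_+(\Abb) \cap \Omega_-(\Abb)$; this is a symmetric strengthening of the computation in Cor. \ref{lowest3}. Because $\rho$ is $\Vbb^{\otimes 2}$-equivariant, $\rho(\upomega) \in \Omega(P') \otimes \Omega(\Ubb) \subset P' \otimes \Ubb$. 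If $\Ubb$ were not lowest generated, the canonical surjection $\pi: \Ubb \twoheadrightarrow \Ubb / \bk{\Omega(\Ubb)}$ would be nonzero and vanish on $\Omega(\Ubb)$, so the composite
\begin{align*}
\Abb \xrightarrow{\rho} P' \otimes \Ubb \xrightarrow{\id_{P'} \otimes \pi} P' \otimes (\Ubb / \bk{\Omega(\Ubb)})
\end{align*}
would send $\upomega \mapsto 0$. Since $\upomega$ generates $\Abb$ as a $\Vbb^{\otimes 2}$-module (Prop. \ref{nodal7}), this composite is identically zero, and the uniqueness in the universal property forces $\pi = 0$, a contradiction. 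Hence $\Ubb$ is lowest generated.

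Combining these computations, the contradiction hypothesis gives $\dim \Hom_\Vbb(P, \Mbb) = \dim \Hom_\Vbb(\Ubb, \Mbb)$ for every $\Mbb \in \Mod(\Vbb)$. Prop. \ref{dimension} then yields $P \simeq \Ubb$, which is impossible since $P$ is not lowest generated while $\Ubb$ is. The main obstacle is the mode-reversal computation verifying $\upomega \in \Omega_+(\Abb) \cap \Omega_-(\Abb)$ from \eqref{eq17}; once this symmetric lowest-weight property is in hand, the universal property cleanly delivers the lowest-generation of $\Ubb$.
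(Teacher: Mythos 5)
Your argument is correct and follows essentially the same route as the paper: assume equality of dimensions for all pairs, identify the smooth side with $\Hom_\Vbb(P,-)$ via \eqref{eq8} and the nodal side with a representable functor $\Hom_\Vbb(\Ubb,-)$ via \eqref{eq14} and \cite[Cor. 1.10]{DSPS19-balanced}, apply Prop. \ref{dimension} to get $P\simeq\Ubb$, and reach a contradiction by showing the representing object is lowest generated. Your handling of that last step is a mild streamlining of the paper's: instead of first proving surjectivity of the induced map $\Abb\otimes P\to\Ubb$ and then invoking Cor. \ref{lowest3}, you use that $\upomega$ lies in the lowest weight subspace of $(\Abb,Y_-)$ (so $\rho(\upomega)\in P'\otimes\Omega(\Ubb)$) together with Prop. \ref{nodal7} and the injectivity of the universal-property map --- both versions rest on the same facts about $\upomega$.
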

\begin{proof}
	Suppose, to the contrary, that we have 
	\begin{align}\label{main5}
		\dim \ST_{\fn}^*(\Xbb\otimes \Ybb)= \dim \ST_{\fk B}^*(\Xbb\otimes \Ybb),\text{ for all }\Xbb,\Ybb\in \Mod(\Vbb)
    \end{align}
	By Prop. \ref{lowest2}, there exists an irreducible $\Wbb\in \Mod(\Vbb)$ such that $P_\Wbb$ is not lowest generated.
	By \eqref{nodal8}, we have  
	\begin{align}\label{main3}
		\dim \ST_{\fk B}^*(\Ubb'\otimes P_\Wbb)=\dim \Hom_{\Vbb^{\otimes 2}}(\Abb,\Ubb\otimes P_\Wbb'),  \text{ for all }\Ubb\in \Mod(\Vbb).
	\end{align}
	By \cite[Cor. 1.10]{DSPS19-balanced}, every left exact linear functor from a finite $\Cbb$-linear category to $\Vect$ is representable. Thus, there exists $\Dbb\in \Mod(\Vbb)$ such that we have a $\Ubb$-natural linear isomorphism
	\begin{align}\label{main4}
		\psi_\Ubb:\Hom_{\Vbb}(\Dbb,\Ubb)\xrightarrow{\simeq} \Hom_{\Vbb^{\otimes 2}}(\Abb,\Ubb\otimes P_\Wbb').
	\end{align}
	 We fix such an isomorphism $\psi_\Ubb$. It follows by \eqref{main3} and \eqref{main4} that
	\begin{align}\label{main6}
		\dim \ST_{\fk B}^*(\Ubb'\otimes P_\Wbb)=\dim \Hom_{\Vbb}(\Dbb,\Ubb), \text{ for all }\Ubb\in \Mod(\Vbb).
	\end{align}
    On the other hand, by \eqref{eq8}, $\ST_{\fn}^*(\Ubb'\otimes P_\Wbb)$ can be identified with $\Hom_\Vbb(P_\Wbb,\Ubb)$, so
	\begin{align}\label{main7}
		\dim \ST_{\fn}^*(\Ubb'\otimes P_\Wbb)= \dim \Hom_\Vbb(P_\Wbb,\Ubb),\text{ for all }\Ubb\in \Mod(\Vbb).
	\end{align}
    By \eqref{main5}, \eqref{main6} and \eqref{main7}, we have $\dim \Hom_\Vbb(\Dbb,\Ubb)=\dim \Hom_\Vbb(P_\Wbb,\Ubb)$ for all $\Ubb\in \Mod(\Vbb)$. This together with Prop. \ref{dimension} implies that $\Dbb\simeq P_\Wbb$.

	We claim that $\Dbb$ is lowest generated. If this claim is true, then it will contradict our assumption that $P_\Wbb$ is not lowest generated. This completes our proof. To see the claim, let
	\begin{align*}
		\wtd\upalpha \in \Hom_{\Vbb^{\otimes 2}}(\Abb,\Dbb\otimes P_\Wbb')
	\end{align*}
    be the element such that $\wtd\upalpha=\psi_\Dbb(\id_\Dbb)$. The isomorphism $\psi_\Ubb=\eqref{main4}$ is therefore implemented by
	\begin{equation}\label{main18}
	\begin{gathered}
		\psi_\Ubb: \Hom_{\Vbb}(\Dbb,\Ubb)\xrightarrow{\simeq} \Hom_{\Vbb^{\otimes 2}}(\Abb,\Ubb\otimes P_\Wbb')\\
         f\mapsto (f\otimes \id_{P_\Wbb'})\circ \wtd\upalpha
	\end{gathered}
    \end{equation}
	
	We view $\wtd\upalpha$ as a linear map
	\begin{align*}
		\upalpha:\Abb\otimes P_\Wbb\rightarrow \Dbb,\quad a\otimes x\mapsto \Lan \wtd \upalpha(a),x\Ran.
	\end{align*}
	Since $\wtd\upalpha$ intertwines the actions of $\Vbb^{\otimes 2}$, $\upalpha$ satisfies the following property: for each $v\in \Vbb$, $a\in \Abb$ and $x\in P_\Wbb$, the relations 
	\begin{subequations}\label{main15}
	\begin{gather}
		\label{main17}\upalpha(Y_+(v,z)a\otimes x)=Y_\Dbb(v,z)\upalpha(a\otimes x)\\
		\upalpha(Y_-(v,z)a\otimes x)=\upalpha(a\otimes Y_{P_\Wbb}'(v,z)x)
	\end{gather}
\end{subequations}
	hold in $\Cbb[[z^{\pm 1}]]$. Suppose that $\upalpha$ is not surjective. By \eqref{main17}, the image $\upalpha(\Abb\otimes P_\Wbb)$ is a proper left $\Vbb$-submodule of $\Dbb$. Set 
	\begin{align} \label{main16}
	\Ubb=\Dbb/\upalpha(\Abb\otimes P_\Wbb)
     \end{align}
	in \eqref{main18}. Then $\Ubb$ is nonzero and \eqref{main18} takes the form
	\begin{equation}\label{eq15}
	\begin{gathered}
		\Hom_\Vbb(\Dbb,\Dbb/\upalpha(\Abb\otimes P_\Wbb))\xrightarrow{\simeq}  \Hom_{\Vbb^{\otimes 2}}\big(\Abb,(\Dbb/\upalpha(\Abb\otimes P_\Wbb))\otimes P_\Wbb'\big)\\
		f\mapsto (f\otimes \id_{P_\Wbb'})\circ \wtd \upalpha
	\end{gathered}
    \end{equation}
	The map $(f\otimes \id_{P_\Wbb'})\circ \wtd \upalpha$, viewed as a linear map $\Abb\otimes P_\Wbb\rightarrow \Dbb/\upalpha(\Abb\otimes P_\Wbb)$, is equal to $f\circ \upalpha$. Therefore, the image of the nonzero canonical projection $\pi:\Dbb\rightarrow \Dbb/\upalpha(\Abb\otimes P_\Wbb)$ under \eqref{eq15} is zero, contradicting the injectivity of \eqref{eq15}. Therefore, $\upalpha$ must be surjective. 

	Let $\Omega(\Abb)$ denote the lowest weight subspace of $(\Abb,Y_+)$, and let $\Omega(\Dbb)$ denote the lowest weight subspace of $\Dbb$. By \eqref{main17}, we have 
	\begin{align*}
	\upalpha(\Omega(\Abb)\otimes P_\Wbb)\subset\Omega(\Dbb) 
    \end{align*}
	Moreover, by Cor. \ref{lowest3}, $\Omega(\Abb)$ generates $\Abb$ as a left $\Vbb$-module. Since $\upalpha$ is surjective and satisfies \eqref{main17}, it follows that $\Dbb$ is generated by $\upalpha(\Omega(\Abb)\otimes P_\Wbb)$. Consequently, $\Dbb$ is generated by $\Omega(\Dbb)$, and hence is lowest generated. This completes the proof.
\end{proof}
\begin{rem}
	In the proof of Thm. \ref{main1}, the module $\Dbb$ is in fact the fusion product of 
	\begin{align*}
		\vcenter{\hbox{{
						   \includegraphics[height=2.0cm]{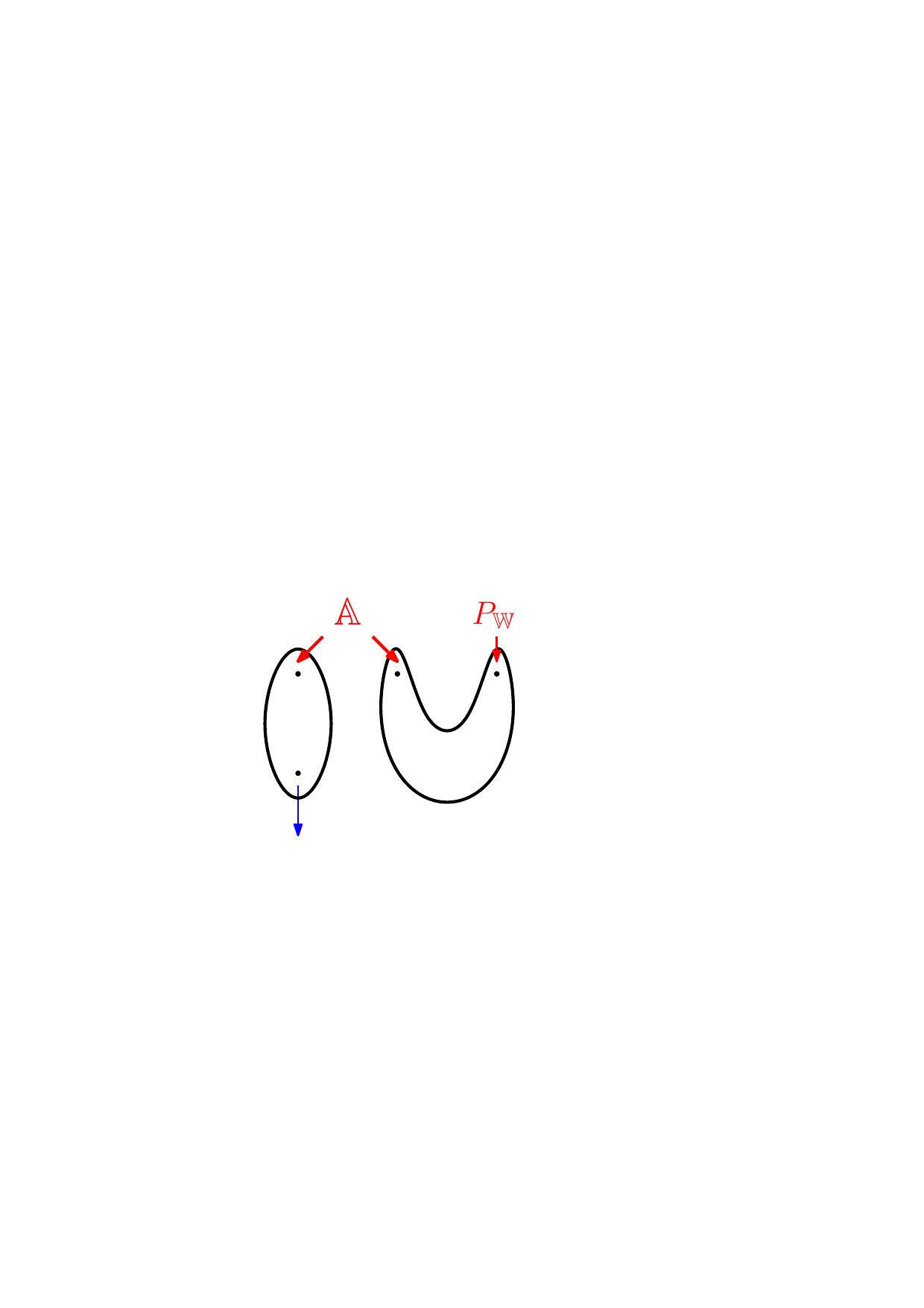}}}}
	\end{align*}
    and $\upalpha$ is the canonical conformal block of $\Dbb$ \cite{GZ1}:
	\begin{align*}
		\upalpha\in \ST^*\Bigg(\vcenter{\hbox{{
						   \includegraphics[height=2.5cm]{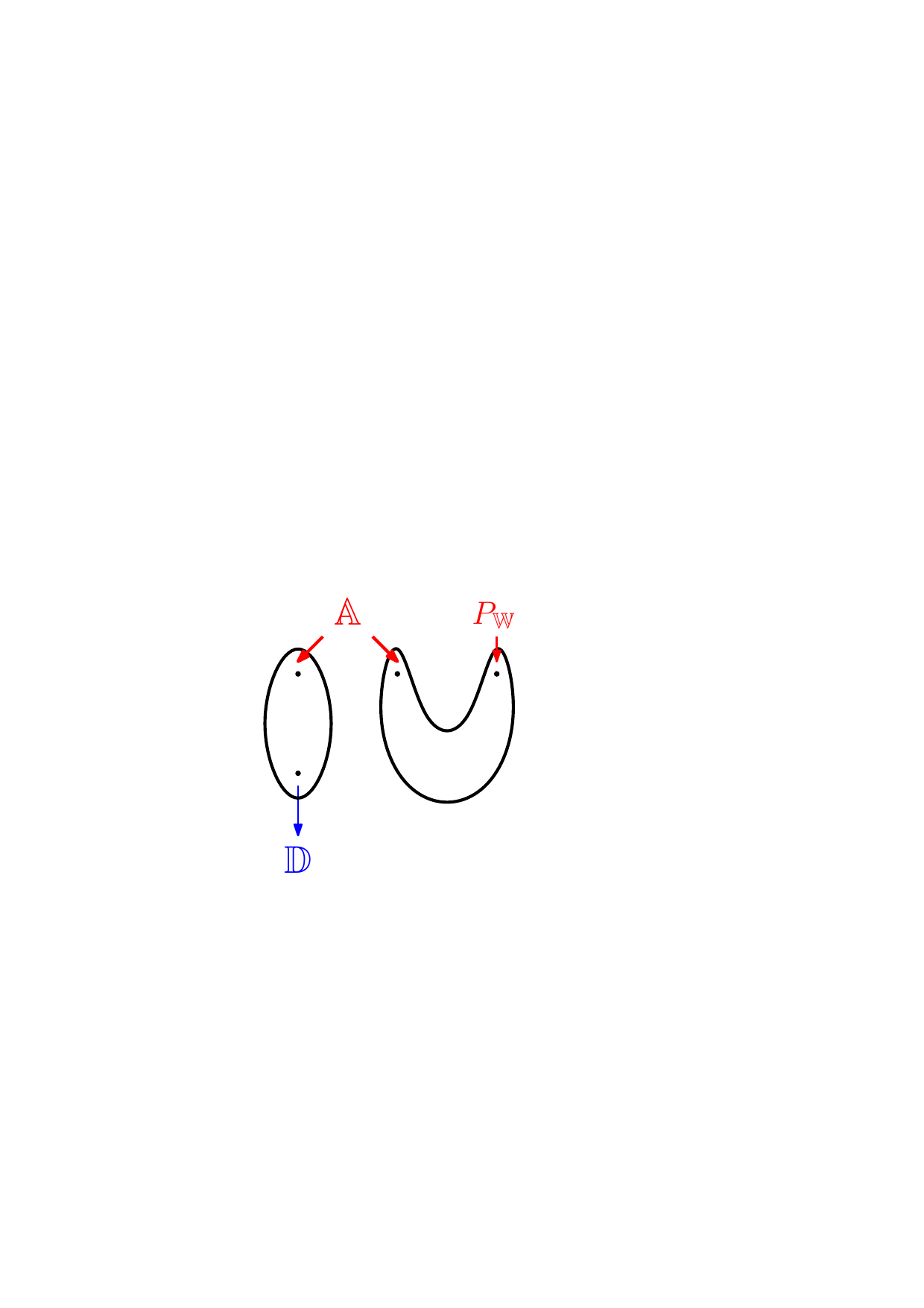}}}}~\Bigg)
	\end{align*}
	The surjectivity of $\upalpha$ is precisely the partial injectivity property of canonical conformal blocks (cf. \cite[Ch. 3]{GZ1} or \cite[Rem. 3.17]{GZ2}). Nevertheless, since we do not assume that the reader is familiar with the notion of fusion products introduced in \cite{GZ1}, we provide here a self-contained proof.
\end{rem}

\begin{rem}\label{locallyfree}
	Assume that there exists a module in $\Mod(\Vbb)$ that is not lowest generated. Let $N\geq 2$. By Thm. \ref{main1} and propagation of conformal blocks \cite{Zhu-global,Cod19,DGT1,GZ1}, there exist $\Xbb,\Ybb\in \Mod(\Vbb)$ such that 
	\begin{align}\label{main14}
		\dim \ST^*\Bigg(\vcenter{\hbox{{
						   \includegraphics[height=2.5cm]{fig5.pdf}}}}~\Bigg)\ne 
							\dim \ST^*\Bigg(\vcenter{\hbox{{
											   \includegraphics[height=2.5cm]{fig4.pdf}}}}~\Bigg)
		\end{align}
		In \eqref{main14}, both spheres carry $N$ marked points, partitioned into two groups. The first group consists of two blue marked points, $\infty$ and $0$, inherited from $\fn,\fk B$ respectively. The second group consists of $N-2$ purple marked points, all distinct from the nodes, each associated with a copy of $\Vbb$. 
		
		Though the curves are not stable, they are affine when the marked points are removed. Therefore, the proof of propagation in \cite[Thm. 6.2]{DGT1} (using Riemann-Roch theorem) still applies to the present situation.

		We conclude that:
		\begin{enumerate}
			\item[(a)] The spaces of conformal blocks associated to $\Xbb,\Ybb,\Vbb,\cdots,\Vbb$ do not form a vector bundle on $\overline{\mathcal{M}}_{0,N}$ for $N\geq 4$.
			\item[(b)] The sheaf of coinvariants associated to $\Xbb,\Ybb,\Vbb,\cdots,\Vbb$ on $\overline{\mathcal{M}}_{0,N}$ is not locally free for $N\geq 4$.
			\item[(c)] The two conformal block functors (cf. Def. \ref{smoothcb} and \ref{nodalcb}) corresponding to  
		    \begin{align*}
				\vcenter{\hbox{{
								   \includegraphics[height=2.0cm]{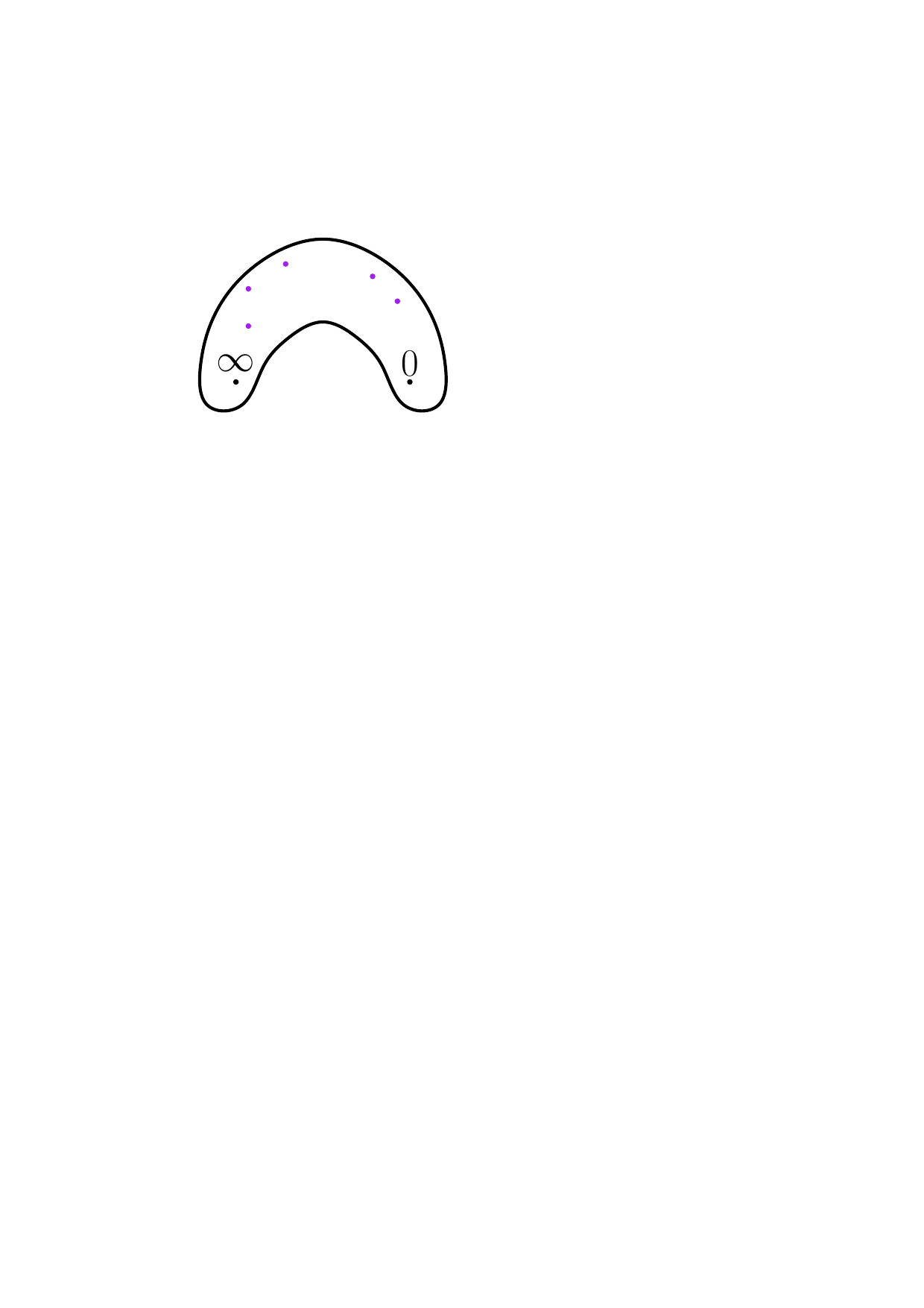}}}}\qquad 
									\vcenter{\hbox{{
													   \includegraphics[height=2.0cm]{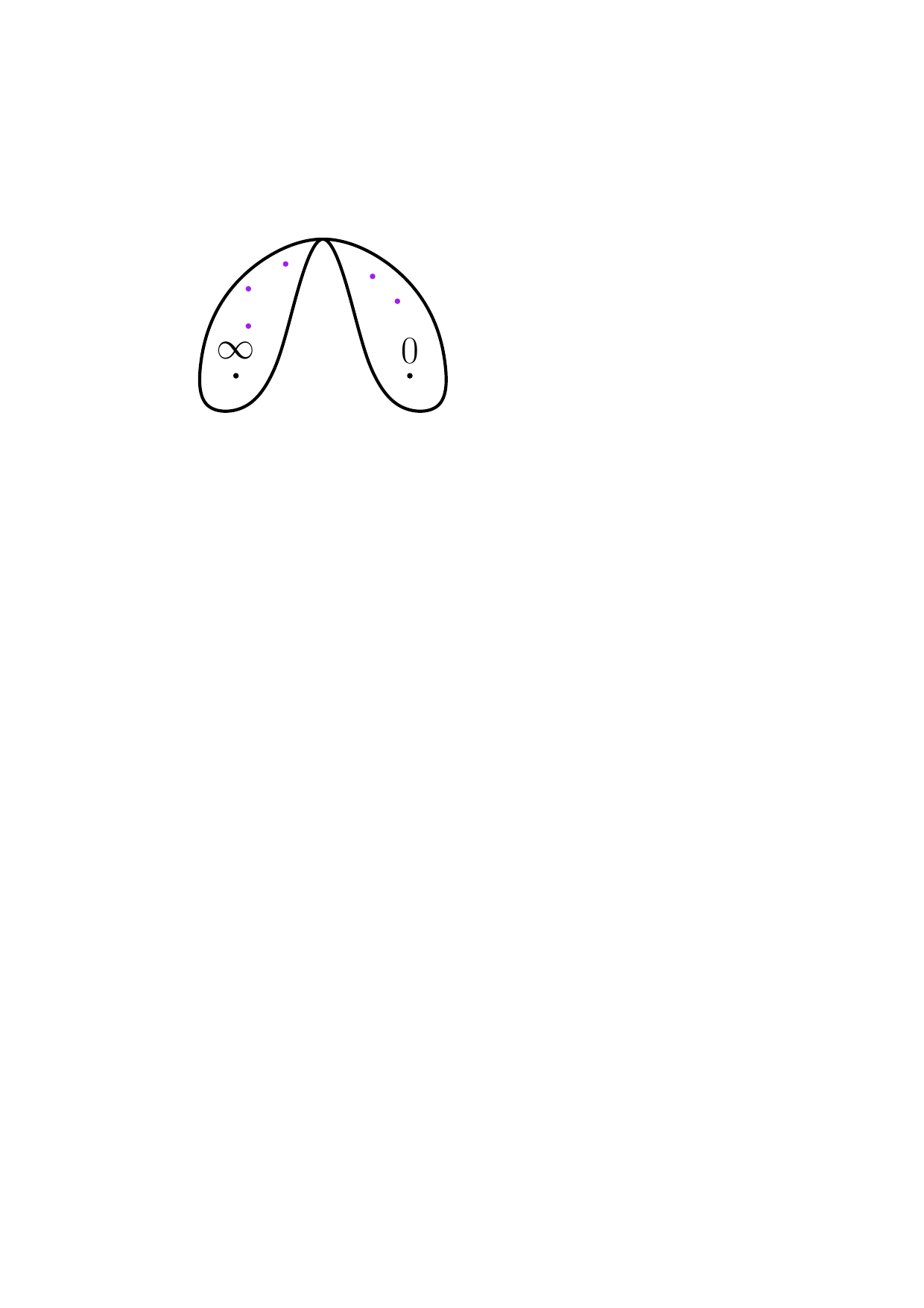}}}}
				\end{align*}
				are not equivalent.
		\end{enumerate}
\end{rem}

\subsection{The end is not isomorphic to the mode transition algebra}
The mode transition algebra $\fk A$ was first introduced in \cite{DGK-presentations}. $\fk A$ is a quotient of $\Xbb\otimes \Ybb$ by a $\Vbb\times \Vbb$-invariant subspace, where $\Xbb,\Ybb\in \Mod(\Vbb)$. Thus, $\fk A$ is an object in $\Mod(\Vbb^{\otimes 2})$. Moreover, $\fk A$ contains a distinguished element, denoted by $1$.

Recall the nodal conformal block functor $\ST_{\fk B}^*$ and the smooth conformal block functor $\ST_{\fn}^*$ described in Sec. \ref{smooth3} and \ref{nodal2}.

\begin{thm}[{\cite[Prop. 3.3]{DGK-presentations}}]\label{smooth2}
	Let $\Xbb,\Ybb\in \Mod(\Vbb)$. The linear map
\begin{gather*}
	\Hom_{\Vbb^{\otimes 2}}(\fk A,\Xbb'\otimes \Ybb')\simeq \ST_{\fk B}^*(\Xbb\otimes \Ybb)\\
	T\mapsto T(1)
\end{gather*}
is an isomorphism. 
\end{thm}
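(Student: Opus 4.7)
The natural strategy is to recognize this as a universal-property statement: the pair $(\fk A, 1)$ represents the nodal conformal block functor on contragredient $\Vbb^{\otimes 2}$-modules. I would carry out three checks in turn---well-definedness of $T \mapsto T(1)$, injectivity, and surjectivity---each of which leverages two structural facts about $\fk A$ from its construction in \cite{DGK-presentations}: that $1 \in \fk A$ generates $\fk A$ as a $\Vbb^{\otimes 2}$-module, and that the defining relations on $1$ (coming from its origin as the image of the vacuum in the Zhu algebra $A(\Vbb)$) are dual to the relations \eqref{eq16} and \eqref{eq18} cutting out $\ST_{\fk B}^*$.

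For well-definedness, the element $T(1)$ lies in $\Xbb' \otimes \Ybb'$ and pairs with $\Xbb \otimes \Ybb$ to define a linear functional; to see this functional lies in $\ST_{\fk B}^*(\Xbb \otimes \Ybb)$, I would transport the identities satisfied by $1 \in \fk A$ through the $\Vbb^{\otimes 2}$-equivariant map $T$. Concretely, $Y_+(v)_n \cdot 1 = Y_-(v)_n \cdot 1 = 0$ in $\fk A$ whenever $\wt(v) - n - 1 > 0$ (yielding \eqref{eq16} after dualizing via the pairing), and the $\theta$-twisted relation in $\fk A$ linking $Y_+(v)_{\wt(v)-1} \cdot 1$ with $Y_-(v)_{\wt(v)-1} \cdot 1$ becomes \eqref{eq18} once one identifies the contragredient action of $Y_-$ on $\Xbb' \otimes \Ybb'$ with $Y_-'$ on $\Xbb \otimes \Ybb$. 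Injectivity is then immediate: since $1$ generates $\fk A$, the morphism $T$ is determined by $T(1)$.

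The substantive step is surjectivity. Given $\uppsi \in \ST_{\fk B}^*(\Xbb \otimes \Ybb)$, Prop. \ref{nodal9} together with the canonical identification $(\Xbb \otimes \Ybb)' \simeq \Xbb' \otimes \Ybb'$ places $\uppsi$ in $\Xbb' \otimes \Ybb'$. I would then define $T: \fk A \to \Xbb' \otimes \Ybb'$ by $a \cdot 1 \mapsto a \cdot \uppsi$ for $a$ in the universal algebra $\mathscr{U}(\Vbb)^{\otimes 2}$, where the right-hand side uses the contragredient $\Vbb^{\otimes 2}$-module structure on $\Xbb' \otimes \Ybb'$. This map is patently $\Vbb^{\otimes 2}$-equivariant and sends $1 \mapsto \uppsi$; the only thing to check is well-definedness, i.e., that every relation in $\fk A$ imposed on $1$ is also satisfied by $\uppsi$. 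But these relations are precisely the duals of the conditions defining $\ST_{\fk B}^*$, which $\uppsi$ satisfies by hypothesis.

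The main obstacle is the last compatibility check in the surjectivity step. The relations defining $\fk A$ involve the involution $\theta$ on the degree-zero part $\mathscr{U}(\Vbb)_0$, which records how the left and right actions of $A(\Vbb)$ on itself translate into the $+$- and $-$-side actions of $\Vbb^{\otimes 2}$. One must match this $\theta$ carefully against the primed vertex operator $Y_-'$ appearing in \eqref{eq18}: the formula $Y_-'(v, z) = Y_-(\MU(\upgamma_z) v, z^{-1})$ together with the adjointness $\bk{Y_-(v,z) w, w'} = \bk{w, Y_-'(v,z) w'}$ is what converts the $\theta$-twisted defining relation of $\fk A$ into precisely \eqref{eq18}. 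This bookkeeping between $\theta$ on the algebraic side and the primed operator on the conformal-block side is essentially the only technical content of the argument.
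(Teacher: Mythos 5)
Your proposal is correct in outline, but it takes a genuinely different route from the paper: the paper does not prove this statement internally at all---it quotes it as \cite[Prop.~3.3]{DGK-presentations}, and Rem.~\ref{nodal4} sketches an alternative derivation by combining propagation of conformal blocks (to identify $\ST_{\fk B}^*(\Xbb\otimes\Ybb)$ with a \emph{smooth} three-pointed conformal block space having $\fk A$ inserted at the extra point) with the representability of that smooth space by $\Hom_{\Vbb^{\otimes 2}}(\fk A,\Xbb'\otimes\Ybb')$. You instead reprove the cited result directly from the presentation of $\fk A$ as the induced module $\mathscr{U}(\Vbb)^{\otimes 2}\otimes_{\mathscr{U}(\Vbb)_{\leq 0}^{\otimes 2}}A(\Vbb)$: Frobenius reciprocity reduces morphisms out of $\fk A$ to vectors of $\Xbb'\otimes\Ybb'$ killed by the negative-degree modes on both sides and satisfying the $\theta$-twisted balancing relation, which dualize to \eqref{eq16} and \eqref{eq18} respectively; Prop.~\ref{nodal9} then places such vectors inside $\Xbb'\otimes\Ybb'$ as required. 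This buys a self-contained argument at the cost of depending on the explicit construction of $\fk A$, whereas the paper's citation-plus-propagation route is shorter but black-boxes the content. One point you should make explicit in the surjectivity step: besides the balancing relation and the vanishing of negative modes, well-definedness of $a\cdot 1\mapsto a\cdot\uppsi$ also requires that the kernel of the surjection $\mathscr{U}(\Vbb)_0\twoheadrightarrow A(\Vbb)$ annihilate $\uppsi$. This does follow from \eqref{eq16}, but only via the nontrivial fact that this kernel is the degree-zero part of $\mathscr{U}(\Vbb)\cdot\mathscr{U}(\Vbb)_{<0}$; your phrase ``these relations are precisely the duals of the conditions defining $\ST_{\fk B}^*$'' glosses over exactly this input, which should be cited or proved.
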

\begin{rem}\label{nodal4}
	By \cite[Prop. 3.3]{DGK-presentations} and propagation of conformal blocks (see Rem. \ref{locallyfree} for details), there are natural equivalences
	\begin{align}\label{nodal3}
		\ST^*\Bigg(\vcenter{\hbox{{
					   \includegraphics[height=2.5cm]{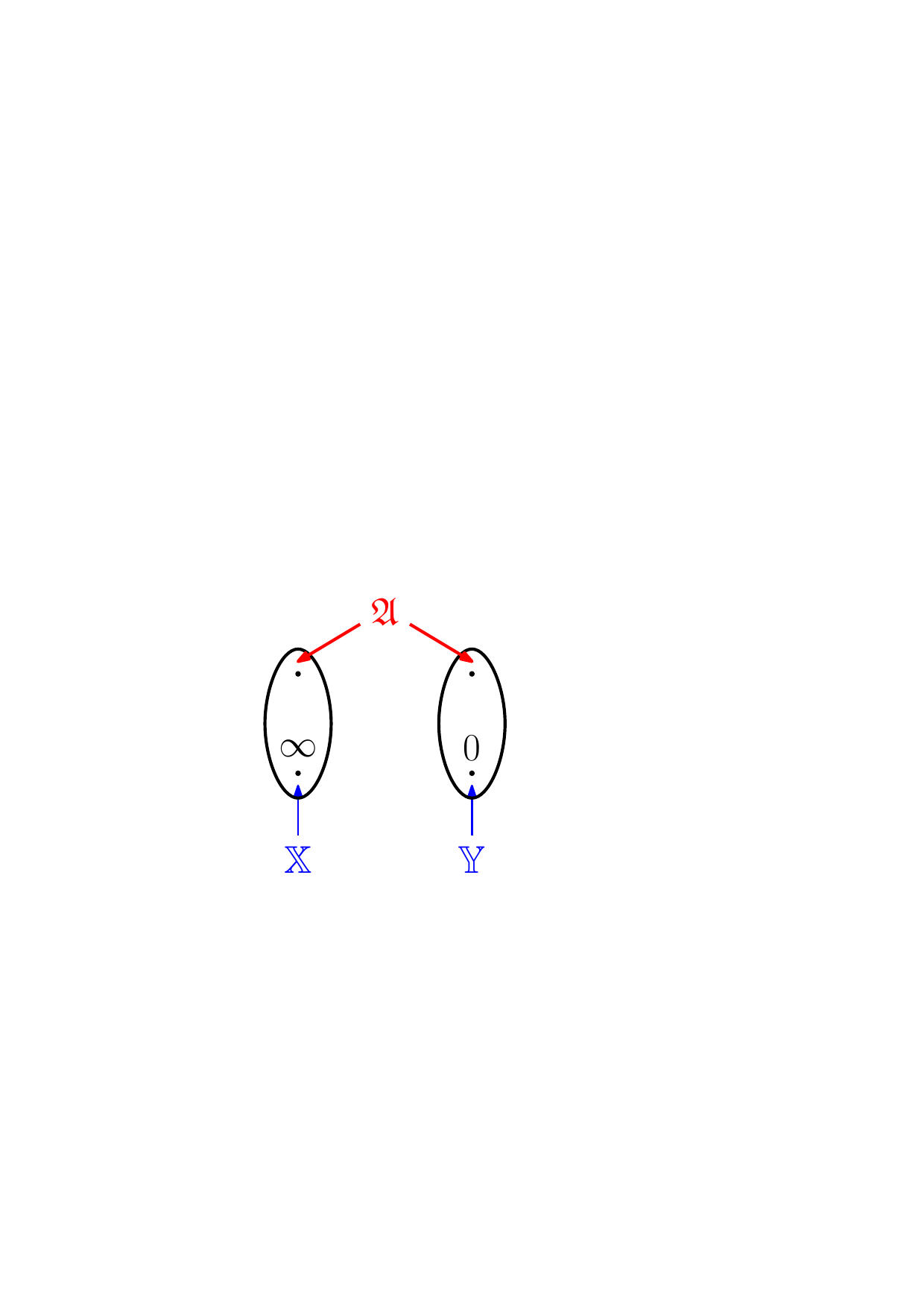}}}}~\Bigg)\xrightarrow{\simeq }\ST^*\Bigg(\vcenter{\hbox{{
						\includegraphics[height=2.5cm]{fig2.pdf}}}}~\Bigg)=\ST_{\fk B}^*(\Xbb\otimes \Ybb)
	\end{align}
The space of smooth conformal blocks on the left hand side of \eqref{nodal3} can be identified with $\Hom_{\Vbb^{\otimes 2}}(\fk A,\Xbb'\otimes \Ybb')$ (see \cite[Prop. 2.3]{GZ5} for details). Therefore, Thm. \ref{smooth2} follows.
\end{rem}

The following remark indicates the relation between the mode transition algebra $\fk A$ and the nodal fusion product $\Abb$. It will not be used in this paper.
\begin{rem}
By Thm. \ref{smooth2}, the mode transition algebra $\fk A$ represents the nodal conformal block functor. Recall from Sec. \ref{nodal2} that the nodal fusion product $\Abb$ represents the nodal conformal block functor. Thus, $\fk A\simeq \Abb$ as objects of $\Mod(\Vbb^{\otimes 2})$.
\end{rem}

On the other hand, we consider the end
\begin{align*}
	\Ebb=\int_{\Mbb\in \Mod(\Vbb)} \Mbb\otimes_\Cbb \Mbb' \quad \in \Mod(\Vbb^{\otimes 2}).
\end{align*}
For each $\Mbb\in \Mod(\Vbb)$, the dinatural transformation of $\Ebb$ gives a morphism $\varphi_\Mbb:\Ebb\rightarrow \Mbb\otimes \Mbb'$ in $\Mod(\Vbb^{\otimes 2})$. 

\begin{pp}\label{main19}
	Let $\Xbb,\Ybb\in \Mod(\Vbb)$. We have an isomorphism 
\begin{gather}\label{main20}
	\Hom_{\Vbb^{\otimes 2}}(\Ebb,\Xbb'\otimes \Ybb')\simeq \ST_{\fn}^*(\Xbb\otimes \Ybb)
\end{gather}
\end{pp}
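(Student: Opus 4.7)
The plan is to compose two natural isomorphisms. The first is \eqref{eq8}, which gives $\ST_{\fn}^*(\Xbb\otimes\Ybb)\simeq \Hom_\Vbb(\Ybb,\Xbb')$ via $T\mapsto T^\flat$. It is therefore enough to construct a natural isomorphism
\[
\Phi:\Hom_\Vbb(\Ybb,\Xbb')\xrightarrow{\simeq}\Hom_{\Vbb^{\otimes 2}}(\Ebb,\Xbb'\otimes\Ybb').
\]
I would define $\Phi$ using the dinatural maps $\varphi_\Mbb:\Ebb\to\Mbb\otimes\Mbb'$: for $T:\Ybb\to\Xbb'$, set $\Phi(T)=(T\otimes\id_{\Ybb'})\circ\varphi_\Ybb$. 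Since $T$ is a morphism in $\Mod(\Vbb)$ and $\varphi_\Ybb$ is a morphism in $\Mod(\Vbb^{\otimes 2})$, the composite $\Phi(T)$ indeed lies in $\Hom_{\Vbb^{\otimes 2}}(\Ebb,\Xbb'\otimes\Ybb')$. Dinaturality of $\varphi_\blt$ applied to $T:\Ybb\to\Xbb'$, combined with the canonical isomorphism $\Xbb''\simeq \Xbb$ in $\Mod(\Vbb)$, also gives the alternative formula $\Phi(T)=(\id_{\Xbb'}\otimes T')\circ\varphi_{\Xbb'}$. Naturality of $\Phi$ in both $\Xbb$ and $\Ybb$ is immediate from these two formulas.

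The main content of the proposition is bijectivity. This is delicate because the universal property defining $\Ebb$ describes morphisms \emph{into} $\Ebb$, not morphisms out of it. My preferred approach is to mirror the strategy used in Remark \ref{nodal4} for the mode transition algebra: regard $\Ebb$ as a $\Vbb^{\otimes 2}$-module inserted at a single marked point carrying a pair of local coordinates, and apply (the analogue of) \cite[Prop. 2.3]{GZ5} to identify
\[
\Hom_{\Vbb^{\otimes 2}}(\Ebb,\Xbb'\otimes\Ybb')\simeq \ST^*\bigl(\text{3-pointed }\Pbb^1\text{ with insertions }\Ebb,\Xbb,\Ybb\bigr).
\]
Propagation of conformal blocks \cite{Cod19,DGT1,GZ1} then collapses the $\Ebb$-labeled point and produces $\ST_{\fn}^*(\Xbb\otimes\Ybb)$, which by \eqref{eq8} is $\Hom_\Vbb(\Ybb,\Xbb')$.

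The hard part will be verifying that this composite isomorphism really coincides with $\Phi$, which requires tracking the dinatural system $\{\varphi_\Mbb\}_{\Mbb\in\Mod(\Vbb)}$ through the conformal-block identification of \cite[Prop. 2.3]{GZ5}. As a purely categorical backup, both sides of \eqref{main20} are left exact contravariant functors of $\Xbb$ on the finite abelian category $\Mod(\Vbb)$, so by standard arguments it suffices to check that $\Phi$ is an isomorphism when $\Xbb$ (and $\Ybb$) are taken from a projective generator of $\Mod(\Vbb)$; on such test objects, bijectivity can be read off from the dinaturality and the universal property of $\Ebb$.
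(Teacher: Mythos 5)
Your map $\Phi$ is exactly the one the paper uses: the paper's proof consists of citing \cite[Cor.\ 2.9]{FSS20} for the statement that
\begin{align*}
T\mapsto (\id_{\Xbb'}\otimes T^t)\circ\varphi_{\Xbb'}
\end{align*}
is an isomorphism $\Hom_\Vbb(\Ybb,\Xbb')\xrightarrow{\simeq}\Hom_{\Vbb^{\otimes 2}}(\Ebb,\Xbb'\otimes\Ybb')$, and then composing with \eqref{eq8}. So the entire ``main content'' you identify --- bijectivity --- is, in the paper, a known categorical property of the end in a finite category, not something re-proved via conformal blocks.

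Both of your proposed routes to bijectivity have problems. The conformal-block route misuses propagation: propagation of conformal blocks only removes marked points labeled by the \emph{vacuum module} $\Vbb$; it cannot ``collapse'' a point (or pair of points) labeled by $\Ebb$. What you would actually need there is the sewing/factorization theorem realizing the end as the object implementing sewing of smooth conformal blocks (the results of \cite{GZ3,GZ5} alluded to in the paper), which is far heavier machinery and essentially presupposes the statement being proved. The categorical backup is closer to workable --- a reduction of a natural transformation between left exact contravariant functors to projective test objects via a presentation and the five lemma is legitimate --- but the final step, that on projectives ``bijectivity can be read off from the dinaturality and the universal property of $\Ebb$,'' is precisely the nontrivial point and is left unproved; as you note yourself, the universal property of the end classifies morphisms \emph{into} $\Ebb$, so extracting $\Hom(\Ebb,-)$ requires an actual argument (this is what \cite[Cor.\ 2.9]{FSS20} supplies). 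As written, the proposal does not close the bijectivity gap.
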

\begin{proof}
	By \cite[Cor. 2.9]{FSS20}, the linear map 
\begin{gather}\label{eq9}
\begin{gathered}
	\Hom_\Vbb(\Ybb,\Xbb')\rightarrow \Hom_{\Vbb^{\otimes 2}}(\Ebb,\Xbb'\otimes \Ybb')\\
	T\mapsto (\id_{\Xbb'}\otimes T^t)\circ \varphi_{\Xbb'}
\end{gathered}
\end{gather}
is an isomorphism. See \cite{GZ3,GZ5} for details. By \eqref{eq8} and \eqref{eq9}, we have the isomorphism \eqref{main20}.
\end{proof}

\begin{thm}\label{main8}
	Assume that there exists a module in $\Mod(\Vbb)$ that is not lowest generated. Then $\Ebb\not\simeq \fk A$ in $\Mod(\Vbb^{\otimes 2})$.
\end{thm}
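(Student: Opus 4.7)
The plan is to exhibit $\Ebb$ and $\fk A$ as distinct representing objects for two conformal block functors that are already known (by Thm. \ref{main1}) to be non-isomorphic in dimension. The point is that $\Ebb\simeq \fk A$ in $\Mod(\Vbb^{\otimes 2})$ would force $\Hom_{\Vbb^{\otimes 2}}(\Ebb,-)\simeq \Hom_{\Vbb^{\otimes 2}}(\fk A,-)$ as functors on $\Mod(\Vbb^{\otimes 2})$, and in particular would equate the dimensions of their values on every object of the form $\Xbb'\otimes \Ybb'$ with $\Xbb,\Ybb\in \Mod(\Vbb)$.

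First I would invoke Thm. \ref{main1}: under the non-lowest-generated hypothesis, there exist $\Xbb,\Ybb\in \Mod(\Vbb)$ such that
\begin{align*}
\dim \ST_\fn^*(\Xbb\otimes \Ybb)\ne \dim \ST_{\fk B}^*(\Xbb\otimes \Ybb).
\end{align*}
Next, by Prop. \ref{main19}, the left-hand side equals $\dim \Hom_{\Vbb^{\otimes 2}}(\Ebb,\Xbb'\otimes \Ybb')$; and by Thm. \ref{smooth2}, the right-hand side equals $\dim \Hom_{\Vbb^{\otimes 2}}(\fk A,\Xbb'\otimes \Ybb')$. Hence
\begin{align*}
\dim \Hom_{\Vbb^{\otimes 2}}(\Ebb,\Xbb'\otimes \Ybb')\ne \dim \Hom_{\Vbb^{\otimes 2}}(\fk A,\Xbb'\otimes \Ybb').
\end{align*}
If $\Ebb\simeq \fk A$ in $\Mod(\Vbb^{\otimes 2})$, any such isomorphism induces an equality of the two $\Hom$-spaces above, a contradiction. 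Therefore $\Ebb\not\simeq \fk A$.

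There is essentially no obstacle here: the real work has already been carried out in the proof of Thm. \ref{main1}, which constructs the witness pair $(\Xbb,\Ybb)$. The only small point to make explicit in a finished write-up is that $\Xbb'\otimes \Ybb'$ indeed lies in $\Mod(\Vbb^{\otimes 2})$, so that the two $\Hom$-spaces above are well-defined and genuinely distinguish $\Ebb$ from $\fk A$; this is immediate from the closure of $\Mod(\Vbb)$ under contragredients and the external tensor product. I would conclude by noting the conceptual picture: $\fk A$ and $\Ebb$ are the universal objects representing nodal and smooth 2-pointed genus-0 conformal blocks respectively, so their non-isomorphism is the module-theoretic manifestation of the failure of the dimension equality \eqref{main2}.
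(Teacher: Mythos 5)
Your argument is exactly the paper's proof: invoke Thm.~\ref{main1} to obtain the witness pair $(\Xbb,\Ybb)$, identify $\dim\Hom_{\Vbb^{\otimes 2}}(\Ebb,\Xbb'\otimes\Ybb')$ with $\dim\ST_{\fn}^*(\Xbb\otimes\Ybb)$ via Prop.~\ref{main19} and $\dim\Hom_{\Vbb^{\otimes 2}}(\fk A,\Xbb'\otimes\Ybb')$ with $\dim\ST_{\fk B}^*(\Xbb\otimes\Ybb)$ via Thm.~\ref{smooth2}, and conclude by contradiction. The proposal is correct and takes essentially the same route as the paper.
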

\begin{proof}
By Thm. \ref{main1}, there exist $\Xbb,\Ybb\in \Mod(\Vbb)$ such that 
\begin{align}\label{eq12}
	\dim \ST_{\fn}^*(\Xbb\otimes \Ybb)\ne \dim \ST_{\fk B}^*(\Xbb\otimes \Ybb).
\end{align}
By Thm. \ref{smooth2}, we have an isomorphism 
\begin{align}\label{eq11}
	\Hom_{\Vbb^{\otimes 2}}(\fk A,\Xbb'\otimes \Ybb')\simeq \ST_{\fk B}^*(\Xbb\otimes \Ybb).
\end{align}
Suppose, to the contrary, that $\Ebb\simeq \fk A$ in $\Mod(\Vbb^{\otimes 2})$. By Prop. \ref{main19} and \eqref{eq11}, there exists an isomorphism
\begin{align*}
	\ST_{\fn}^*(\Xbb\otimes \Ybb)\simeq \ST_{\fk B}^*(\Xbb\otimes \Ybb).
\end{align*}
contradicting \eqref{eq12}. Therefore, $\Ebb\not\simeq \fk A$ in $\Mod(\Vbb^{\otimes 2})$.
\end{proof}

\footnotesize
	\bibliographystyle{alpha}
    \bibliography{voa}

\noindent {\small \sc Yau Mathematical Sciences Center and Department of Mathematics, Tsinghua University, Beijing, China.}

\noindent {\textit{E-mail}}: zhanghao1999math@gmail.com \qquad h-zhang21@mails.tsinghua.edu.cn
\end{document}